\newtheorem{thm}{Theorem}[section]
\newtheorem{lemma}[thm]{Lemma}
\newtheorem{prop}[thm]{Proposition}
\newtheorem{cor}[thm]{Corollary}
\theoremstyle{definition}
\newtheorem{defn}[thm]{Definition}
\newtheorem{remark}[thm]{Remark}
\newtheorem{example}[thm]{Example}
\newcommand{\hf}[1]{\operatorname{\mathbf H}_{#1} \! \mathfrak F}
\def\cd{\operatorname{cd}\nolimits}
\def\vcd{\operatorname{vcd}\nolimits}
\newcommand{\ul}[1]{\underline{#1}}
\newcommand\wh[1]{\widehat{#1}}
\newcommand{\ovl}[1]{\overline{#1}}
\newcommand{\qbox}[1]{\quad\hbox{#1}\quad}
\def\dst{\displaystyle}
\def\im{\operatorname{im}}
\def\id{\operatorname{Id}}
\def\Res{\operatorname{Res}\nolimits}
\def\Ind{\operatorname{Ind}\nolimits}
\def\Inf{\operatorname{Inf}\nolimits}
\def\Coind{\operatorname{Coind}\nolimits}
\renewcommand\mod{\operatorname{mod}}
\def\Mod{\operatorname{Mod}\nolimits}
\def\PHom{\operatorname{PHom}}
\def\Hom{\operatorname{Hom}\nolimits}
\def\proj{\operatorname{(proj)}\nolimits}
\def\Ext{\operatorname{Ext}\nolimits}
\def\End{\operatorname{End}\nolimits}
\def\Aut{\operatorname{Aut}\nolimits}
\def\coker{\operatorname{coker}\nolimits}
\def\SL{\operatorname{SL}\nolimits}
\newcommand{\res}[2]{\!\!\downarrow^{#1}_{#2}~}
\newcommand{\ind}[2]{\!\!\uparrow_{#1}^{#2}~}
\DeclareMathOperator{\tac}{tac}
\DeclareMathOperator{\inc}{Inc}
\DeclareMathOperator{\GPmp}{\underline{GP}}
\DeclareMathOperator{\compres}{CompRes}
\DeclareMathOperator{\Proj}{Proj}
\DeclareMathOperator{\Stab}{Stab}
\DeclareMathOperator{\spli}{spli}
\DeclareMathOperator{\silp}{silp}
\DeclareMathOperator{\projdim}{projdim}
\DeclareMathOperator{\injdim}{injdim}
\DeclareMathOperator{\findim}{findim}
\DeclareMathOperator{\gldim}{gldim}
\DeclareMathOperator{\modproj}{\underline{Mod}}
\DeclareMathOperator{\GP}{GP}
\DeclareMathOperator{\smod}{\widehat{\Mod}}
\DeclareMathOperator{\shom}{\widehat{\Hom}}
\DeclareMathOperator{\send}{\widehat{\End}}
\DeclareMathOperator{\saut}{\widehat{\Aut}}
\DeclareMathOperator{\ev}{ev}
\DeclareMathOperator{\rest}{res}
\DeclareMathOperator{\tor}{tor}
\def\Z{\mathbb Z}
\title{The Stable Category and Invertible Modules for Infinite Groups}
\date{\today}
\author{Nadia Mazza}
\address{Department of Mathematics and Statistics\\
	Lancaster University\\
	Lancaster LA1 4YF\\
United Kingdom}
\email{n.mazza@lancaster.ac.uk}
\author{Peter Symonds}
\thanks{Second author partially supported by an International Academic Fellowship from the Leverhulme Trust.}
\address{Department of Mathematics\\
	University of Manchester\\
	Manchester M13 9PL\\
	United Kingdom}
\email{Peter.Symonds@manchester.ac.uk}
\subjclass[2010]{Primary: 20C07 ; Secondary: 20C12, 16E05}
\date{\today}
\begin{document}
	
	\begin{abstract} We construct a well-behaved stable category of modules for a large class of infinite groups. We then consider its Picard group, which is the group of invertible (or endotrivial)  modules. We show how this group can be calculated when the group  acts on a tree with finite stabilisers.
		\end{abstract}
	
	\keywords{invertible module, stable category, endotrivial module}
\maketitle

\section{Introduction}


In the modular representation theory of finite groups, the stable module category is of fundamental importance. It is normally constructed by quotienting out all the morphisms that factor through a projective module, but it can also be characterised as the largest quotient category on which the syzygy operator is well defined and invertible. It is a triangulated category when the distinguished triangles are taken to be all the triangles that are isomorphic to the image of a short exact sequence of modules.

In the case of infinite groups we will construct a stable category
that has these properties, at least for a fairly large class of groups that
we call the groups of type $\Phi_k$ (see \cite{talelli}). This class includes all groups of finite virtual cohomological dimension over $k$ and all groups that admit a
finite dimensional classifying space for proper actions, $\underbar{E}G$, see Corollary~\ref{cor:g-cw-cx}. However, just quotienting out the projectives is usually not sufficient. We also describe various other possible constructions of this category and show that they all give equivalent results.

Our methods are similar to those of Ikenaga \cite{ikenaga}, even though he does not mention a stable category and is instead interested in generalising the Tate-Farrell cohomology theory to a larger class of groups than those of finite virtual cohomological dimension. Benson \cite{benson} has also described a stable category for infinite groups in the same spirit as ours, but he places restrictions on the modules rather than on the groups. The work of Cornick, Kropholler and coauthors is closely related too, but it is more homological in flavour; see \cite{cekt} for a recent account and more details of the history of the subject.

This stable category has a well-behaved tensor product over the ground ring $k$, so is a tensor triangulated category in the language of Balmer. One important invariant of such a category is its Picard group. In this case the elements are the stable isomorphism classes of $kG$-modules $M$ for which there exists a module $N$ such that $M \otimes_kN$ is in the stable class of the trivial module $k$. These form a group under tensor product, which we denote by $T_k(G)$. For finite groups this is known as the group of endotrivial modules and has been extensively studied by many authors (see \cite{thevenaz} for an excellent survey). Such modules have been classified for $p$-groups in characteristic $p$ and also for many families of general finite groups \cite{CHM,CMN,CMN2,CT}.

We develop the basic theory of these modules in the infinite case and provide some tools for calculating the Picard group  $T_k(G)$. In particular, we have a formula whenever $G$ acts on a tree with finite stabilisers.

One original justification for studying these modules in the finite case, given by Dade when he introduced them \cite{dade}, was that they form a class of modules that is ``small enough to be classified and large enough to be useful''. We illustrate how this remains true in the infinite case at the end of the paper, by calculating some examples.

Some notes for a summer school course based on this material will appear in \cite{vanc}.

\section{Groups of Type $\Phi_k$}\label{sec:phi}

In this paper, $k$ will always be a commutative noetherian ring of finite global
dimension; the fundamental case is when $k$ is a field of positive characteristic. All modules will be $k$-modules; sometimes a fixed $k$ will be understood and we will omit it from the notation. We will consider groups $G$, usually infinite, and the category of all $kG$-modules $\Mod (kG)$.

Recall that the projective dimension of a $kG$-module, $\projdim_{kG}M$, is the shortest possible length of a projective resolution of $M$ ($\infty$ if there is no resolution of finite length). The global dimension of $k$, $\gldim k$, is the supremum of the projective dimensions of all $k$-modules.

\begin{defn}
	A group $G$ is of type $\Phi_k$ if it has the property that a $kG$-module is of finite projective dimension if and only if its restriction to any finite subgroup is of finite projective dimension.
\end{defn}

\begin{defn}
	The finitistic dimension of $kG$ is $$\findim kG = \sup \{ \projdim_{kG}M | \projdim_{kG}M < \infty \}.$$
	\end{defn}

\begin{lemma}
	\label{le:fd}
	If the group $G$ is finite and the $kG$-module $M$ has finite projective dimension then $\projdim_{kG} M \leq \gldim k$.
	\end{lemma}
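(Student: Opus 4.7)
The approach is to prove the stronger statement that $\projdim_{kG}M = \projdim_k M$ whenever $\projdim_{kG}M < \infty$; the claimed bound then follows from $\projdim_k M \leq \gldim k$. One direction is immediate: since $G$ is finite, $kG$ is a free $k$-module of finite rank, so every $kG$-projective module is $k$-projective and any $kG$-projective resolution of $M$ is simultaneously a $k$-projective resolution, giving $\projdim_k M \leq \projdim_{kG} M$. For the reverse, set $d_0 = \projdim_k M$, choose a $kG$-projective resolution $P_\bullet \to M$, and observe that, since each $P_i$ is $k$-projective, dimension shifting in $k$-modules forces the $d_0$th $kG$-syzygy $L := \Omega^{d_0}M$ to be $k$-projective; moreover $\projdim_{kG}L = \max(\projdim_{kG}M - d_0,\,0)$. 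The proof therefore reduces to the key claim: \emph{every $k$-projective $kG$-module $L$ of finite $kG$-projective dimension is $kG$-projective.}

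To prove the key claim I would induct on $m := \projdim_{kG}L$. The case $m=0$ is trivial, and for $m \geq 2$ the module $\Omega^{m-1}L$ is again $k$-projective (each syzygy short exact sequence in a $kG$-projective resolution of $L$ is $k$-split because its rightmost term is $k$-projective by the earlier step of the same argument) and has $\projdim_{kG}=1$, so it suffices to handle $m=1$. There one takes $0 \to Q_1 \to Q_0 \to L \to 0$ with $Q_0, Q_1$ both $kG$-projective; the obstruction to a $kG$-splitting is the class of this extension in $\Ext^1_{kG}(L, Q_1)$. Because $L$ is $k$-projective, tensoring a $kG$-projective resolution of $k$ with $L$ over $k$ yields the standard identification $\Ext^i_{kG}(L, N) \cong H^i(G;\,\Hom_k(L,N))$ for every $i$ and every $kG$-module $N$. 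Writing $Q_1$ as a $kG$-direct summand of a free module $kG \otimes_k W$, and using the $kG$-isomorphism $\Hom_k(L,\, kG \otimes_k W) \cong kG \otimes_k \Hom_k(L, W)$ valid for $G$ finite (so that the right-hand side is an induced module from the trivial subgroup), Shapiro's lemma forces $H^1(G;\,\Hom_k(L, Q_1)) = 0$. Hence the sequence $kG$-splits, making $L$ a $kG$-summand of $Q_0$, i.e.\ $kG$-projective, which contradicts $m \geq 1$.

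The principal obstacle is the $kG$-equivariance of the isomorphism $\Hom_k(L,\, kG \otimes_k W) \cong kG \otimes_k \Hom_k(L, W)$: both sides carry diagonal $G$-actions, and matching them up rests on the equality $\Ind_1^G = \Coind_1^G$ for finite $G$. This is the one place where finiteness of $G$ is used in an essential way; the rest of the argument is routine change-of-rings bookkeeping.
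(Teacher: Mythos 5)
Your proof is correct, and it takes a genuinely different route from the paper's while also establishing the stronger equality $\projdim_{kG}M = \projdim_k M$. The paper passes directly to $\Omega^d M$ with $d = \gldim k$, notes that this syzygy is $k$-projective, observes that a finite $kG$-projective resolution of it is $k$-split and has terms that are relatively injective with respect to the trivial subgroup (projectives are summands of induced modules, and for finite $G$ induction equals coinduction), and then splits the resolution from the left; no explicit cohomology is invoked. You instead take $\Omega^{d_0}M$ with $d_0 = \projdim_k M$ (a tighter choice), isolate the key claim that a $k$-projective $kG$-module of finite $kG$-projective dimension is $kG$-projective, reduce to projective dimension one, and prove the vanishing $\Ext^1_{kG}(L, Q_1) = 0$ via the identification $\Ext^*_{kG}(L,-) \cong H^*(G;\Hom_k(L,-))$ (valid because $L$ is $k$-projective), the $kG$-isomorphism $\Hom_k(L, kG\otimes_k W) \cong kG \otimes_k \Hom_k(L,W)$ for finite $G$, and Shapiro's lemma. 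The two arguments share the same crucial input --- for finite $G$, $\Ind_1^G = \Coind_1^G$, so $kG$-projectives behave like relative injectives --- but you repackage it as an ordinary $\Ext$-vanishing (a stronger statement than the relative $\Ext$-vanishing implicit in the paper's splitting argument), at the cost of invoking a bit more cohomological machinery. Both approaches are sound; the paper's is shorter and more self-contained, while yours makes the change-of-rings structure explicit and sharpens the numerical bound.
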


\begin{proof}
	Let $d = \gldim k$ and consider the $d$th syzygy $\Omega ^dM$ of $M$. This is projective over $k$, so its projective resolution is split over $k$. The modules in the resolution are summands of modules induced from the trivial subgroup  and induction is equivalent to coinduction for finite groups, so these modules are injective relative to the trivial subgroup. The projective resolution can be taken to be of finite length so, by relative injectivity, it can be split starting from the left, thus $\Omega^dM$ must be projective.
	\end{proof}

\begin{lemma}
	\label{lem:findim}
	If $G$ is of type $\Phi_k$ then $\findim kG$ is finite.
	\end{lemma}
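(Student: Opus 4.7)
The plan is to reduce bounding $\findim kG$ to bounding the projective dimensions of a manageable class of modules, then exploit closure under arbitrary direct sums to turn the finiteness guaranteed by the $\Phi_k$ hypothesis into a uniform bound.

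Set $d = \gldim k$. For any $kG$-module $M$ of finite projective dimension, I would first observe that $kG$ is free as a $kH$-module for every subgroup $H \leq G$, so $kG$-projectives restrict to $kH$-projectives; thus $\projdim_{kH}(M{\downarrow_H}) \leq \projdim_{kG}M < \infty$ for every finite $H$. Applying Lemma~\ref{le:fd} gives $\projdim_{kH}(M{\downarrow_H}) \leq d$, and hence $\Omega^d M{\downarrow_H}$ is projective for every finite subgroup $H$ (taking syzygies in a fixed $kG$-projective resolution and restricting).

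Next, I would introduce the class
\[
\mathcal{L} = \{\, N \in \Mod(kG) \;:\; N{\downarrow_H} \text{ is projective for every finite subgroup } H \leq G\,\}.
\]
Any direct sum of projective $kH$-modules is projective, and restriction commutes with direct sums, so $\mathcal{L}$ is closed under arbitrary direct sums. By the $\Phi_k$ hypothesis, every $N \in \mathcal{L}$ has finite projective dimension over $kG$.

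The crucial step — and the one I expect to be the main obstacle, since the definition of $\Phi_k$ only gives finiteness and not a bound — is to show
\[
m := \sup\{\,\projdim_{kG}N \mid N \in \mathcal{L}\,\} < \infty.
\]
I would argue by contradiction: if not, choose $N_n \in \mathcal{L}$ with $\projdim_{kG}N_n \to \infty$ and form $N = \bigoplus_{n} N_n$. By closure, $N \in \mathcal{L}$, so $\projdim_{kG}N$ is finite by $\Phi_k$; but each $N_n$ is a direct summand of $N$, forcing $\projdim_{kG} N_n \leq \projdim_{kG}N$ for all $n$, a contradiction. Combining the previous step with this bound yields $\projdim_{kG} M \leq d + m$ whenever $\projdim_{kG}M < \infty$, so $\findim kG \leq d + m < \infty$.
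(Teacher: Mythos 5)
Your proof is correct, and the essential idea is the same as the paper's: argue by contradiction by forming a direct sum of modules with unbounded projective dimensions and using the uniform bound from Lemma~\ref{le:fd} together with the $\Phi_k$ hypothesis. The paper applies this more directly: it takes modules $M_i$ of finite projective dimension with $\projdim_{kG}M_i \geq i$, observes that $\projdim_{kF}M_i \leq \gldim k$ for every finite $F$ by Lemma~\ref{le:fd}, and then notes that $M = \oplus_i M_i$ still has $\projdim_{kF}M \leq \gldim k$ for every finite $F$ while $\projdim_{kG}M = \infty$, contradicting $\Phi_k$. Your version inserts an extra reduction, first passing to $\Omega^d M$ to land in the class $\mathcal{L}$ of modules with projective restrictions, then bounding $\sup_{\mathcal{L}}\projdim$; this is sound, and gives the slightly more explicit bound $\findim kG \leq d + m$, but the syzygy step is not actually needed — the uniform bound $\gldim k$ on restricted projective dimension already suffices to run the direct-sum contradiction, as the paper shows.
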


\begin{proof}
	If $\findim kG$ is not finite, then for each positive integer $i$ we can find a $kG$-module $M_i$ such that $i \leq \projdim_{kG}M_i < \infty$. Over any finite subgroup $F$ we have $ \projdim_{kF}M_i < \infty$, so by Lemma~\ref{le:fd} we obtain $ \projdim_{kF}M_i \leq \gldim k$. Let $M = \oplus_iM_i$; then for any finite subgroup $F$ we have $ \projdim_{kF}M \leq \gldim k$, yet $ \projdim_{kG}M = \infty$, so $G$ cannot be of type $\Phi_k$.
	\end{proof}

 The class of groups of type $\Phi_{\mathbb Z}$ was introduced by Talelli \cite{talelli}. The class of groups of type $\Phi_k$ is closed under subgroups and contains many naturally occurring groups; in particular, we have the following result.

\begin{prop}\label{prop:g-cw-cx} 
	Suppose that there exists a number $m$ and an exact complex of $kG$-modules $$0 \rightarrow C_n \rightarrow \cdots \rightarrow C_0 \rightarrow k \rightarrow 0,$$
	such that each $C_i$ a summand of a sum of modules of the form $k \! \uparrow_H^G$ with $H$ of type $\Phi_k$ and $\findim kH \leq m$. Then $G$ is of type
	$\Phi_k$ and $\findim kG \leq n+m$.  
\end{prop}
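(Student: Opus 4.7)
The plan is to show directly that any $kG$-module $M$ with $\projdim_{kF}M<\infty$ for every finite subgroup $F\leq G$ satisfies $\projdim_{kG}M\leq n+m$. This establishes both halves simultaneously: the bound gives $\findim kG\leq n+m$, and combined with the automatic reverse implication (restriction preserves finite projective dimension because $kG$ is free as a $kF$-module), it yields that $G$ is of type $\Phi_k$. The strategy is to tensor the hypothesized resolution of $k$ with $M$ over $k$ and then perform a dimension shift using bounds on each $C_i\otimes_k M$.

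Since $k\!\uparrow_H^G\cong k[G/H]$ is $k$-free, every $C_i$ is $k$-projective as a summand of a direct sum of such modules, so the resolution splits over $k$ and tensoring with $M$ (diagonal action) preserves exactness, giving an exact sequence
$$0\to C_n\otimes_k M\to\cdots\to C_0\otimes_k M\to M\to 0$$
of $kG$-modules. To bound $\projdim_{kG}(C_i\otimes_k M)$ by $m$, I would use the projection-formula isomorphism $(k\!\uparrow_H^G)\otimes_k M\cong (M\!\downarrow_H)\!\uparrow_H^G$ to reduce to modules of the form $(M\!\downarrow_{H_j})\!\uparrow_{H_j}^G$. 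Any finite subgroup of $H_j$ is a finite subgroup of $G$, so $M\!\downarrow_{H_j}$ has finite projective dimension on each finite subgroup of $H_j$; since $H_j$ is of type $\Phi_k$ with $\findim kH_j\leq m$, it follows that $\projdim_{kH_j}(M\!\downarrow_{H_j})\leq m$. Induction is exact and preserves projectives (being left adjoint to exact restriction), so the induced modules have projective dimension at most $m$ over $kG$, and this bound is preserved under direct sums and direct summands.

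The last step is a standard dimension-shift along the above resolution. Writing $Z_i$ for the cycle module in degree $i$, so $Z_0=M$ and $Z_n=C_n\otimes_k M$, the short exact sequences $0\to Z_{i+1}\to C_i\otimes_k M\to Z_i\to 0$ give, via the long exact \Ext\ sequence, the bound $\projdim Z_i\leq\max\bigl(\projdim(C_i\otimes_k M),\,\projdim Z_{i+1}+1\bigr)$. Descending induction from $i=n$ to $i=0$ then yields $\projdim Z_i\leq m+(n-i)$, so $\projdim_{kG}M\leq n+m$ as required. The only points requiring care are the verification of the projection formula and of the $k$-splitting of the resolution, both routine consequences of the identification $k\!\uparrow_H^G\cong k[G/H]$; no real obstacle arises, and the proof is essentially a careful bookkeeping of projective dimensions across the three operations (tensor with $M$, induction, dimension shift).
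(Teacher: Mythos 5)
Your proof is correct and follows essentially the same route as the paper's: split the resolution over $k$, tensor with $M$, apply the projection formula $(k\!\uparrow_H^G)\otimes_k M\cong (M\!\downarrow_H)\!\uparrow^G$, bound $\projdim_{kG}(C_i\otimes_k M)$ by $m$ using that $H$ is of type $\Phi_k$ with $\findim kH\leq m$, and finish by dimension shifting. You merely spell out in full the "easy induction" that the paper leaves implicit.
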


\begin{proof}
	Let $M$ be a $kG$-module that is of finite projective dimension on restriction to any finite subgroup of $G$. The exact complex in the statement of the proposition is split over $k$, so it remains exact on tensoring with $M$.
	
	Each $C_i \otimes_k M$ is a summand of a sum of terms of the form $(k \! \uparrow_H^G) \otimes_k M \cong M \! \downarrow_H \! \uparrow ^G$. By the hypotheses on $H$, we know that $M \! \downarrow _H$ is of finite projective dimension and $\projdim_{kH}M \leq m$, hence $\projdim_{kG} M \! \downarrow_H \! \uparrow ^G \leq m$.
	
	Setting $D_i=C_i \otimes_k M$ we obtain modules $D_i$ with $\projdim_{kG}D_i\leq m$ and an exact complex $$0 \rightarrow D_n \rightarrow \cdots \rightarrow D_0 \rightarrow M \rightarrow 0.$$ An easy induction on $n$ now shows that $\projdim_{kG}M \leq n+m$.
\end{proof}

Recall that a group $G$ is of finite cohomological dimension over $k$ (finite $\cd_k$) if $\projdim_{kG}k < \infty$ and is of finite virtual cohomological dimension over $k$ (finite $\vcd_k$) if it has a subgroup of finite index that is of finite cohomological dimension over $k$ (see \cite[VII.11]{brown}).

\begin{cor}
	\label{cor:g-cw-cx}
	A group $G$ is of type $\Phi_k$ if either of the following conditions hold:
	\begin{enumerate}
		\item
		there is a finite dimensional contractible $G$-CW-complex with finite stabilisers, or
		\item
		$G$ is of finite $\vcd_k$.
		\end{enumerate}
	\end{cor}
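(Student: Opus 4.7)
The plan is to verify the hypotheses of Proposition~\ref{prop:g-cw-cx} in each case, using finite stabilisers as the allowed subgroups $H$. The common preliminary observation is that a finite group $F$ is trivially of type $\Phi_k$ (the defining biconditional is tautological, since $F$ is itself among its finite subgroups) and, by Lemma~\ref{le:fd}, satisfies $\findim kF \leq \gldim k$. Hence every finite subgroup is permissible as an $H$ in Proposition~\ref{prop:g-cw-cx} with $m = \gldim k$.

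For (1), let $X$ be the given $n$-dimensional contractible $G$-CW-complex. After replacing $X$ by its barycentric subdivision if necessary, the $G$-action becomes rigid, i.e.\ each cell is fixed pointwise by its stabiliser. The augmented cellular chain complex
$$0 \to C_n(X;k) \to \cdots \to C_0(X;k) \to k \to 0$$
is then an exact complex of $kG$-modules with $C_i(X;k) \cong \bigoplus_\sigma k\uparrow_{H_\sigma}^G$, the sum being indexed by orbit representatives of $i$-cells and $H_\sigma$ denoting the finite stabiliser of $\sigma$. This is precisely the input required by Proposition~\ref{prop:g-cw-cx}, and it yields that $G$ is of type $\Phi_k$ with the bound $\findim kG \leq n + \gldim k$.

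For (2), I would reduce to (1) by appealing to the classical theorem (essentially due to Serre; see \cite[VIII.11]{brown} and related references) that every group of finite $\vcd_k$ admits a finite-dimensional model for the classifying space of proper actions $\underbar{E}G$ -- equivalently, a finite-dimensional contractible $G$-CW-complex with finite cell stabilisers. Part (1) then applies.

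The main technical wrinkle is the subdivision step in (1): without passing to the barycentric subdivision, the stabiliser $H_\sigma$ could permute the vertices of a cell $\sigma$ nontrivially, so that $C_i(X;k)$ is really a sum of orientation-twisted modules $k_\sigma\uparrow_{H_\sigma}^G$ rather than the untwisted $k\uparrow_{H_\sigma}^G$ demanded by Proposition~\ref{prop:g-cw-cx}. Subdivision is the standard remedy and costs only an increase in the dimension (hence in the eventual bound on $\findim kG$) by a factor depending on $n$.
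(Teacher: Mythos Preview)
Your argument for (1) is the paper's argument, and the subdivision remark is a reasonable way to ensure the chain modules are literally of the form $k\!\uparrow_{H}^{G}$ required by Proposition~\ref{prop:g-cw-cx}.

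The gap is in (2). Your reduction to (1) rests on the assertion that every group of finite $\vcd_k$ admits a finite-dimensional contractible $G$-CW-complex with finite stabilisers, and you cite Serre's theorem via Brown. But that result is specific to $k = \mathbb Z$: there the finite-index subgroup $H$ with $\cd_{\mathbb Z} H < \infty$ is necessarily torsion-free, Eilenberg--Ganea then supplies a finite-dimensional free $H$-complex, and one multiplicatively induces this up to $G$. For the general commutative noetherian $k$ of finite global dimension allowed here, a subgroup $H$ with $\cd_k H < \infty$ can contain torsion of order invertible in $k$, and even in the torsion-free case finite $\cd_k H$ is not known to force finite geometric dimension. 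Indeed, if your assertion held for a given $k$, then (since the existence of such a $G$-complex is independent of $k$) every group of finite $\vcd_k$ would be of type $\Phi_{k'}$ for \emph{every} $k'$ --- far stronger than what is being claimed. The topological input your reduction needs is simply not available.

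The paper instead stays algebraic. Swan's construction (following Serre) is the ordered chain complex of the simplex on the finite $G$-set $G/H$: a finite exact complex $0 \to C_{n-1} \to \cdots \to C_0 \to k \to 0$ with $n=[G:H]$, whose terms are sums of permutation modules $k\!\uparrow_{K}^{G}$ with $K$ an intersection of conjugates of $H$. Each such $K$ satisfies $\cd_k K \le \cd_k H$, so $kK$ has finite global dimension ($\le \cd_k H + \gldim k$); thus $K$ is trivially of type $\Phi_k$ with uniformly bounded $\findim$. It is these infinite, finite-index subgroups --- not finite ones --- that are fed into Proposition~\ref{prop:g-cw-cx}, and no $G$-complex with finite isotropy is ever produced.
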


\begin{proof}
	In case (1) we just use the associated $G$-CW-chain complex in Proposition~\ref{prop:g-cw-cx}. In case (2) the complex is constructed by the method of Swan, following Serre \cite[9.2]{swan}.
	\end{proof}

Note that a group $G$ that admits a
finite dimensional classifying space for proper actions, $\underbar{E}G$, satisfies condition (1) above. It has been conjectured by Talelli that $G$ having a finite dimensional $\underbar{E}G$  is actually equivalent to $G$ being of type $\Phi_{\mathbb Z}$ \cite[Conj.\ A]{talelli}.
	
\begin{example} A free abelian group of infinite rank is not of type $\Phi_k$ for any $k$, so neither is any group that contains it.
	\end{example}

\section{The Stable Category}

Write
$\Mod(kG)$ for the category of all $kG$-modules, possibly infinitely generated, and 
$\ul{\Mod}(kG)$ the quotient category with the same objects but morphisms
$$\ul{\Hom}_{kG}(M,N)=\Hom_{kG}(M,N)/\PHom_{kG}(M,N),$$ where 
$\PHom_{kG}(M,N)$ is the submodule of $kG$-homomorphisms $M\to N$ which
factor through a projective module. 

The next observation is well known; for a proof see e.g.\  \cite[\S2]{HP}.

\begin{lemma}\label{lem:stab-iso}
	Suppose $M\cong N$ in $\ul{\Mod}(kG)$. Then there exist projective
	$kG$-modules $P,Q$ such that $M\oplus P\cong N\oplus Q$ in $\Mod(kG)$. 
	
	If $M$ is finitely generated then $Q$ can be taken to be finitely generated and if $N$ is finitely generated then $P$ can also be taken to be finitely generated.
\end{lemma}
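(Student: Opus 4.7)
The plan is the following. Since $M \cong N$ in $\ul{\Mod}(kG)$, there exist $kG$-homomorphisms $f \colon M \to N$ and $g \colon N \to M$ whose compositions differ from the identities by maps that factor through projectives. Write $1_M - gf = \alpha_2 \alpha_1$ with $\alpha_1 \colon M \to P$, $\alpha_2 \colon P \to M$ and $P$ projective, and dually $1_N - fg = \beta_2 \beta_1$ with $\beta_1 \colon N \to Q$, $\beta_2 \colon Q \to N$ and $Q$ projective.

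The easy half of the argument is the observation that $(f, \beta_2) \colon M \oplus Q \to N$ is a split $kG$-epimorphism: the candidate section is $\binom{g}{\beta_1}$, and $(f, \beta_2)\binom{g}{\beta_1} = fg + \beta_2 \beta_1 = 1_N$. This already produces a $kG$-isomorphism $M \oplus Q \cong N \oplus K$, where $K$ is the kernel. Dually, $\binom{f}{\alpha_1} \colon M \to N \oplus P$ is split $kG$-monomorphic with retraction $(g, \alpha_2)$, giving $N \oplus P \cong M \oplus L$.

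The substantive step, and the main obstacle, is to promote these splittings to an actual isomorphism of the desired form, i.e.\ to rule out genuinely non-projective kernels $K, L$. The plan is to exhibit directly a block homomorphism $\Phi \colon M \oplus Q \to N \oplus P$ (possibly after stabilising both $P$ and $Q$ by replacing them by $P \oplus Q$) whose $(1,1)$-entry is $f$, whose $(2,1)$ and $(1,2)$-entries are $\alpha_1$ and $\beta_2$, and whose $(2,2)$-entry is chosen so that a matrix of the same shape built from $g, \alpha_2, \beta_1$ and a suitable correction provides a two-sided inverse. The algebraic glue that makes the off-diagonal entries of $\Phi \Psi$ and $\Psi \Phi$ cancel is the compatibility identity $f \alpha_2 \alpha_1 = \beta_2 \beta_1 f$, obtained at once from $f(1_M - gf) = (1_N - fg) f$, together with its dual $g \beta_2 \beta_1 = \alpha_2 \alpha_1 g$. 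The computational heart of the proof is verifying that after the stabilisation the four block equations admit a simultaneous solution; naively there are obstructions to solving each equation individually, and the role of the enlarged $P$ and $Q$ is exactly to allow the compatibility identity to be used to write down the required $(2,2)$-correction.

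The finitely generated refinement is obtained by shrinking $P$ and $Q$ to finitely generated projective summands containing the images of $\alpha_1$ and $\beta_1$ respectively; this is possible precisely when $M$ (respectively $N$) is finitely generated, and the resulting isomorphism transports this property to the projective added on the other side.
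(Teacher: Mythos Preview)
The paper does not actually prove this lemma; it records it as well known and cites \cite[\S2]{HP}. So there is no proof in the paper to compare against, and your outline must stand or fall on its own.

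Your setup and the two splitting observations are correct and standard: $(f,\beta_2)\colon M\oplus Q\to N$ is a split epimorphism and $\binom{f}{\alpha_1}\colon M\to N\oplus P$ is a split monomorphism, yielding $M\oplus Q\cong N\oplus K$ and $N\oplus P\cong M\oplus L$. The compatibility identity $f\alpha_2\alpha_1=\beta_2\beta_1 f$ (and its dual) that you isolate is also correct and is indeed the relevant algebraic relation.

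The gap lies precisely where you flag the ``computational heart'': you assert that after replacing $P$ and $Q$ by $P\oplus Q$ one can choose a $(2,2)$-entry making $\Phi$ and $\Psi$ mutually inverse, but you never exhibit it. If one writes out the block equations imposed by $\Psi\Phi=1$ and $\Phi\Psi=1$, one finds, for instance, that the entry $c\colon Q\to P$ must satisfy simultaneously $\alpha_2 c=-g\beta_2$ and $c\beta_1=-\alpha_1 g$. Your compatibility identity guarantees these two constraints are \emph{consistent} (applying $\alpha_2$ to the second and using $g\beta_2\beta_1=\alpha_2\alpha_1 g$ recovers a consequence of the first), but consistency is not existence: nothing forces $-g\beta_2$ to lie in the image of $\alpha_2\circ(-)\colon\Hom(Q,P)\to\Hom(Q,M)$, and merely stabilising to $P\oplus Q$ does not change this. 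One standard repair is first to enlarge $P$ so that $\alpha_2$ becomes \emph{surjective} (adjoin a projective surjecting onto $M$, extend $\alpha_2$ by that surjection and $\alpha_1$ by zero), and similarly for $\beta_2$; projectivity of $Q$ and $P$ then produces the needed lifts. Even then the two diagonal equations $\alpha_1\alpha_2+cd=1_P$ and $\beta_1\beta_2+dc=1_Q$ are not automatic and require further adjustment. Without actually writing down $c$ and $d$ and checking all six equations, the argument is incomplete.

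For the finitely generated refinement, your idea is correct but deserves one more sentence: the image of $\alpha_1\colon M\to P$ is a finitely generated submodule of a projective, and one should say why it lies in a finitely generated projective direct summand (embed $P$ in a free module and observe that finitely many elements lie in a finite-rank free summand).
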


When $G$ is finite and $k$ is a field then $\ul{\Mod}(kG)$ is taken to be the stable category, but this does not work well in the infinite case, because $\Omega$ might not be invertible. We shall introduce various possible definitions of a stable category for infinite groups and show that they all agree for groups of type $\Phi_k$.

Recall that there is a natural map $\Omega: \ul{\Hom}_{kG}(M,N) \rightarrow \ul{\Hom}_{kG}(\Omega M, \Omega N)$.

\begin{defn}[\cite{BC}]
	Let $\Stab(kG)$ be the category with all $kG$-modules as objects and morphisms given by $\Hom_{\Stab(kG)}(M,N) = \lim_{n \to \infty} \Hom_{\modproj(kG)}(\Omega^nM,\Omega^nN)$.
	\end{defn}

Note that a $kG$-module has image 0 in $\Stab(kG)$ if and only if it is of finite projective dimension.

This definition can be difficult to work with. In particular, a map in $\Hom_{\Stab(kG)}(M,N)$ might not correspond to any map in $\Hom_{\Mod(kG)}(M,N)$. However, $\Stab(kG)$ is clearly the largest quotient of $\Mod(kG)$ on which $\Omega$ is well defined and invertible. It is quite possible that $\Stab(kG)=0$, for example if $G$ has finite $\cd_k$. 

\begin{defn}
	An acyclic complex of projectives is an unbounded chain complex of projective modules $P_*$ that is exact everywhere. It is totally acyclic if, in addition, $\Hom_{kG}(P_*,Q)$ is acyclic for any projective module $Q$. The category of totally acyclic complexes of projective $kG$-modules and chain maps up to homotopy will be denoted by $K_{\tac}(kG)$.
	\end{defn}

The category $K_{\tac}(kG)$ is naturally triangulated in the usual way for categories based on chain complexes. The triangle shift is given by $[-1]$, where $[n]$ denotes degree shift by $n$.

\begin{defn}
	A Gorenstein projective module is a module that is isomorphic to a kernel in a totally acyclic complex of projectives. The full subcategory of $\Mod (kG)$ on the Gorenstein projective modules will be denoted by $\GP(kG)$ and the full subcategory in $\modproj (kG)$ by $\GPmp(kG)$.
	\end{defn}

In some cases Gorenstein projective modules are easy to recognise.

\begin{lemma}
Suppose that $G$ is of finite $\vcd_k$. The following conditions on a $kG$-module $M$ are equivalent.
\begin{enumerate}
	\item
	$M$ is Gorenstein projective,
	\item
	the restriction of $M$ to some subgroup of finite index that is of finite $\cd_k$ is projective,
	\item
	the restriction of $M$ to any subgroup that is of finite $\cd_k$ is projective.
	\end{enumerate}
	\end{lemma}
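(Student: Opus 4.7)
The plan is to prove the cycle $(3)\Rightarrow(2)\Rightarrow(1)\Rightarrow(3)$. The implication $(3)\Rightarrow(2)$ is immediate because $G$ of finite $\vcd_k$ admits a finite-index subgroup of finite $\cd_k$.

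For $(1)\Rightarrow(3)$, fix any $K\leq G$ of finite $\cd_k$. Since $\gldim k<\infty$, a standard bound $\gldim kK\leq\cd_k K+\gldim k$ (obtained by killing $k$-projective dimension first, then using a length-$\cd_k K$ resolution of $k$ over $kK$ tensored with $k$-projective modules) shows that $kK$ has finite global dimension. Restriction preserves both projectivity and acyclicity, so a totally acyclic complex $P_\bullet$ of projective $kG$-modules with $M$ as a kernel restricts to an acyclic complex of projective $kK$-modules with $Z_0:=M\downarrow_K$ as a kernel. Chasing the short exact sequences $0\to Z_{-j}\to P_{-j}\downarrow_K\to Z_{-j-1}\to 0$ to the right of $Z_0$ exhibits each $Z_{-j}$ as a syzygy of $Z_{-j-1}$, so dimension shifting gives $\projdim_{kK} Z_{-j-1}=\projdim_{kK} Z_{-j}+1$ as long as $Z_{-j}$ is not projective. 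Finite $\gldim kK$ thus forces $M\downarrow_K$ to be projective.

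The substantial direction is $(2)\Rightarrow(1)$: starting from $M\downarrow_H$ projective over $kH$ with $H$ of finite index in $G$, build a totally acyclic complex of projective $kG$-modules with $M$ as a kernel. The left half comes from a projective resolution $\cdots\to P_1\to P_0\to M\to 0$; each restricted short exact sequence $0\to(\Omega^{i+1}M)\downarrow_H\to P_i\downarrow_H\to(\Omega^iM)\downarrow_H\to 0$ splits because $(\Omega^iM)\downarrow_H$ is projective (by induction starting from $M\downarrow_H$), so every syzygy restricts to a projective $kH$-module. For the right half, use $[G:H]<\infty$ to identify $\Ind_H^G\cong\Coind_H^G$: the unit of $\Res\dashv\Coind$ becomes a natural injection $M\hookrightarrow Q_0:=\Ind_H^G(M\downarrow_H)$, with $Q_0$ projective over $kG$. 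Mackey decomposition makes $Q_0\downarrow_H$ projective over $kH$, and splitting the restricted short exact sequence shows $\coker(M\to Q_0)$ also restricts to a projective $kH$-module. Iterate.

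The main obstacle is verifying that the complex just constructed is totally acyclic, not merely acyclic. Finite index enters again through $kG\cong\Ind_H^G(kH)$, which makes every projective $kG$-module a summand of $\Ind_H^G$ of a free $kH$-module. Every kernel $C$ of our complex has $C\downarrow_H$ projective, and Eckmann--Shapiro (using $\Ind=\Coind$) gives $\Ext^i_{kG}(C,\Ind_H^G N)\cong\Ext^i_{kH}(C\downarrow_H,N)=0$ for all $i\geq 1$ and every $kH$-module $N$. Consequently $\Ext^i_{kG}(C,R)=0$ for every projective $kG$-module $R$, which is exactly total acyclicity. The pervasive role of $H$ being of finite index (for $\Ind=\Coind$, for expressing $kG$ as an induced module, and for Mackey in restricting inductions) is what makes $\vcd_k$, rather than merely the existence of some subgroup of finite $\cd_k$, the correct hypothesis.
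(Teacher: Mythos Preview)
Your argument is correct and diverges from the paper on both nontrivial implications. For $(1)\Rightarrow(3)$ the paper walks $n+d$ steps to the right in the totally acyclic complex to write $M\simeq\Omega^{n+d}N$ for some $N$, then invokes the stable isomorphism $\Omega^{n+d}N\simeq(\Omega^nk)\otimes_k(\Omega^dN)$: the second factor is $k$-projective and the first becomes $kH$-projective on restriction (since $n=\cd_kH$), so their tensor product is $kH$-projective. Your route via the bound $\gldim kK\leq\cd_kK+\gldim k$ followed by dimension shifting on the right tail of the restricted complex is more self-contained and avoids the tensor decomposition. For $(2)\Rightarrow(1)$ the paper simply cites the construction of a complete resolution in \cite[X.2.1]{brown}; your explicit build via iterated embeddings $M\hookrightarrow\Coind_H^G\Res_H^GM$, together with the Eckmann--Shapiro verification of total acyclicity, is essentially what that reference does, so you are supplying the details the paper outsources.

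One point to tighten: when you assert that the restricted sequence $0\to M\!\downarrow_H\to Q_0\!\downarrow_H\to C\!\downarrow_H\to 0$ splits, the reason is not parallel to the left half (where the \emph{quotient} being projective forces the split). Projectivity of the subobject $M\!\downarrow_H$ alone does not split an injection out of it; the correct justification here is the triangle identity $\epsilon_{\Res M}\circ\Res(\eta_M)=\id$ for the adjunction $\Res\dashv\Coind$, which retracts $M\!\downarrow_H$ off $Q_0\!\downarrow_H$ and hence exhibits $C\!\downarrow_H$ as a summand of the projective $Q_0\!\downarrow_H$.
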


\begin{proof} Suppose that (1) holds and that $H$ is a subgroup of finite $\cd_k$. Let $n = \cd_k(H)$ and $d= \gldim k$; there is a $kG$-module $N$ such that $M \cong \Omega^{n+d}N \cong (\Omega^nk) \otimes_k (\Omega^dN)$ modulo projectives. Now $\Omega^dN$ is projective over $k$ and, on restriction to $H$, $\Omega^nk$ is projective, thus $M$ is projective over $kH$ and (3) holds. Trivially, (3) implies (2). If (2) holds then the construction in \cite[X.2.1]{brown} produces a complete resolution with $M$ as a kernel, so (1) holds. 
	\end{proof}

In particular, if $G$ is finite then a module is Gorenstein projective if and only if it is projective over $k$. If $k$ is also a field then all $kG$-modules are Gorenstein projective.

\begin{remark} The Gorenstein projective modules correspond to the
  modules called cofibrant in Benson's treatment \cite{benson}, at
  least for groups of type $\Phi_k$ (see \cite{bdt,talelli}).
	\end{remark}

The category $\GP(kG)$ is a Frobenius category, hence $\GPmp(kG)$ is naturally triangulated \cite[2.2]{DEH}. The shift is $\Omega^{-1}$, which is obtained by taking the kernel in degree one less in the totally acyclic complex used to show that the module is Gorenstein projective. 

\begin{defn}\label{def:fctor}
There is a functor $\Omega^0: K_{\tac}(kG) \rightarrow \GPmp(kG)$ obtained by taking the kernel of the boundary map in degree 0.  There is also a natural inclusion functor $\inc : \GPmp(kG) \rightarrow \modproj (kG)$.  
\end{defn}

\begin{defn}
	\label{def:g-coho}
 A {\em complete resolution} of a $kG$-module $M$ consists of a totally acyclic chain complex $F_*$ of
projective $kG$-modules and a projective resolution $P_*$ of $M$, together with a map $F_* \rightarrow P_*$ and an integer $n$ such that the map is an isomorphism in all degrees $n$ and above.
$$
\xymatrix{
&\dots\ar[r]&F_{n+1}\ar[r]\ar@{=}[d]&F_n\ar[r]\ar@{=}[d]&F_{n-1}\ar[r]\ar[d]&\dots\ar[r]&F_0\ar[r]\ar[d]&F_{-1}\ar[r]&\dots\\
&\dots\ar[r]&P_{n+1}\ar[r]&P_n\ar[r]&P_{n-1}\ar[r]&\dots\ar[r]&P_0\ar[r]^\varepsilon&M\ar[r]&0}
$$  
The index $n$ is called the {\em coincidence index}. 
\end{defn}

In the literature this is sometimes called a complete resolution in the strong sense, because of the word totally. We will often abuse the terminology by referring to just  $F_*$ as the complete resolution.

It is known that two complete resolutions of the same module must be chain homotopy equivalent (i.e.\ the $F_*$ are chain homotopy equivalent). A map between two $P_*$ induces a map of the corresponding $F_*$, unique up to homotopy (cf.\ \cite[Lemma~3, Proposition~13]{ikenaga}).

In the same way, we can define a complete resolution of any chain
complex $M_*$ with only finitely many non-zero homology groups: we
just take $P_*$ to be a projective resolution of $M_*$, i.e. a
quasi-isomorphism $P_*\to M_*$.

In general, a module might not possess a complete resolution, but for groups of type $\Phi_k$ they always exist.

\begin{thm}
	\label{thm:compres}
	If $G$ is a group of type $\Phi_k$ then any complex of $kG$-modules with only finitely many non-zero homology groups has a complete resolution.
	\end{thm}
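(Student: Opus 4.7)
The plan is to reduce to the case of a single module and then construct the required totally acyclic complex as the splice of an ordinary projective resolution with a suitable coresolution.

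First, take a projective resolution $P_\bullet \to M_*$ (a quasi-isomorphism from a non-negatively graded complex of projectives), which exists by standard homological algebra because $M_*$ has only finitely many non-zero homology groups. Pick $n$ strictly larger than the top degree in which $M_*$ has non-zero homology. The truncation of $P_\bullet$ in degrees $\geq n$ is then a projective resolution of the single module $K := \ker(P_{n-1}\to P_{n-2})$, and a complete resolution of $K$ splices with $P_\bullet$ in degrees below $n$ to yield a complete resolution of $M_*$. The problem reduces to the case where $M_* = M$ is a single $kG$-module.

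By Lemma~\ref{lem:findim}, $d := \findim kG$ is finite. Fix $n > d + \gldim k$, a projective resolution $P_\bullet \to M$, and set $K = \ker(P_{n-1}\to P_{n-2})$. I would extend $P_\bullet$ leftward by projective $kG$-modules $F_{-1}, F_{-2}, \dots$ to an unbounded acyclic complex that is moreover totally acyclic. The $F_j$ are built inductively: at each stage the current module is embedded into a projective, and the $\Phi_k$ hypothesis together with Lemma~\ref{le:fd} controls what happens on restriction to finite subgroups. Over any finite subgroup $H \leq G$, once more than $\gldim k$ syzygies have been taken, the resulting module is either $kH$-projective or has infinite $kH$-projective dimension; this dichotomy, globalised by the $\Phi_k$ property, prevents the obstructions to total acyclicity from accumulating as one continues the induction.

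The main obstacle is the embedding step at each stage: $kG$ does not in general have enough projective-injectives, so embedding the current module into a projective cannot be achieved by a naive injective-envelope argument. Following the strategy of Ikenaga \cite{ikenaga} (see also \cite{cekt}), I would produce embeddings using modules coinduced from finite subgroups, which are relatively injective with respect to the subgroup, and verify via the $\Phi_k$ property that the successive cokernels remain suitable for the induction and that $\Hom_{kG}(F_\bullet, R)$ stays exact for every projective $R$, as required for total acyclicity.
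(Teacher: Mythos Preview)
Your reduction to a single module is fine and your instinct to follow Ikenaga is correct, but the heart of the argument is underspecified and partly aimed in the wrong direction. You write that at each stage ``the current module is embedded into a projective'' and propose to achieve this via modules coinduced from finite subgroups. But a module coinduced from a finite (or trivial) subgroup is not projective over $kG$ in general; relative injectivity lets you embed \emph{into} it, but that is not yet an embedding into a projective. The missing ingredient is precisely Lemma~\ref{lem:coind}(2): for $G$ of type $\Phi_k$, any module coinduced from the trivial subgroup has \emph{finite projective dimension}. Since every injective is a summand of such a coinduced module (Lemma~\ref{lem:coind}(1)), this yields $\spli(kG) \le \findim kG < \infty$ (using Lemma~\ref{lem:findim}). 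With that uniform bound in hand, one embeds $M$ into an injective, replaces the injective by a finite projective resolution, and splices; iterating produces the negative half of the acyclic complex. This is exactly the Gedrich--Gruenberg construction \cite[4.1]{GG} that the paper invokes. Your paragraph about syzygies over finite subgroups and Lemma~\ref{le:fd} does not help here: that lemma controls kernels of surjections from projectives (the positive-degree side), whereas the embedding step concerns cokernels, and the dichotomy you describe does not by itself produce an embedding into a projective.

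The paper also handles total acyclicity differently and more cleanly than you propose. Rather than tracking $\Hom_{kG}(F_\bullet,R)$ through the induction, it proves separately (Lemma~\ref{lem:tac}) that \emph{every} acyclic complex over such a $kG$ is automatically totally acyclic, because $\silp(kG) < \infty$ by \cite[2.4]{GG}: once projectives have finite injective dimension, $\Hom_{kG}(-,R)$ sends any acyclic complex to an acyclic one. This decoupling --- $\spli$ finite for the existence of the acyclic complex of projectives, $\silp$ finite for total acyclicity --- is the structure of the paper's proof and removes the inductive bookkeeping your sketch would require.
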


We defer the proof to the end of this section.

Thus, for groups of type $\Phi_k$, we obtain a functor $\compres: D^b(\Mod(kG)) \rightarrow K_{\tac}(kG)$, where $D^b(\Mod(kG))$ is the derived category of complexes of $kG$-modules with only finitely many non-zero homology groups. It is easy to see that $K^b(\Proj(kG))$ is in the kernel, where $K^b(\Proj(kG))$ is the homotopy category of bounded complexes of projective $kG$-modules, so we have a functor on the Verdier quotient $\compres: D^b(\Mod(kG))/K^b(\Proj(kG)) \rightarrow K_{\tac}(kG)$.

There is a  functor $[0]:\Stab(kG) \rightarrow
D^b(\Mod(kG))/K^b(\Proj(kG))$. On modules it can be given by regarding
a module as the degree 0 term of a complex that is 0 elsewhere. For
morphisms it is better to regard this slightly differently. Take a
projective resolution $P_M$ of the module $M$ and consider the
truncation $\sigma_{\geq n}(P_M)$ for some $n \geq 0$ (see \cite[1.2.7]{weibel}); it is clearly a projective resolution of $\Omega^nM[-n]$ and it is also equal to $P_M$ in the Verdier localisation. Thus we have $M \cong \Omega^nM[-n]$ in $D^b(\Mod(kG))/K^b(\Proj(kG))$. A morphism $f \in \Hom _{\Stab(kG)}(M,N)$ must correspond to a homomorphism $f' \in \Hom _{\Stab(kG)}(\Omega^nM,\Omega^nN)$ for some $n$, so $f[0]$ can be defined as $f'[-n]: \Omega^nM[-n] \rightarrow \Omega^nN[-n]$.

The next theorem is essentially due to Buchweitz \cite{Buc}, at least for parts (2--4); there are many variants in the literature.

\begin{thm}
	\label{thm:catequiv}
	For a group of type $\Phi_k$, the following categories are equivalent:
	\begin{enumerate}
		\item
		$\Stab(kG)$,
		\item
		$\GPmp(kG)$,
		\item
		$K_{\tac}(kG)$,
		\item
		$D^b(\Mod(kG))/K^b(\Proj(kG))$.
		\end{enumerate}
	The equivalences are obtained from the functors $\inc$,
        $\Omega^0$, $\compres$ and $[0]$ introduced in Definition~\ref{def:fctor}. These are equivalences of triangulated categories, except for those involving $\Stab(kG)$, where a triangulated structure has not yet been defined.
	\end{thm}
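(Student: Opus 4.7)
I will prove the theorem by establishing that the listed functors fit into a cycle
\[
\Stab(kG) \xrightarrow{[0]} D^b(\Mod(kG))/K^b(\Proj(kG)) \xrightarrow{\compres} K_{\tac}(kG) \xrightarrow{\Omega^0} \GPmp(kG) \xrightarrow{\iota} \Stab(kG),
\]
in which $\iota$ denotes the composite of $\inc$ with the natural quotient $\modproj(kG)\to \Stab(kG)$, and that each functor is an equivalence. It suffices to prove three of the four are equivalences; the fourth is then forced. I will treat $\Omega^0$ and $\compres$ by standard Buchweitz-type arguments and handle $\iota$ last, as it embodies the main obstacle.

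For $\Omega^0$: essential surjectivity is immediate from the definition of Gorenstein projective. Full faithfulness is the classical Happel argument: a morphism between two Gorenstein projective modules extends uniquely up to homotopy to a chain map of their totally acyclic resolutions (using projectivity of the terms and total acyclicity), with null-homotopic chain maps corresponding precisely to maps factoring through a projective module. Both triangulated shifts are realised by taking kernels one step lower, so they match. For $\compres$: essential surjectivity is seen by noting that any $F_*\in K_{\tac}(kG)$ is a complete resolution of $\Omega^0(F_*)$ via the truncation $\sigma_{\geq 0}F_*$. For full faithfulness, represent a morphism in $D^b/K^b(\Proj)$ by a chain map between projective resolutions; by the comparison theorem for complete resolutions (cf.~\cite[Lemma~3, Prop.~13]{ikenaga}), such a map extends uniquely up to homotopy to a chain map between the associated totally acyclic complexes. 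In both cases, preservation of distinguished triangles is routine.

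The main obstacle is showing that $\iota:\GPmp(kG)\to \Stab(kG)$ is an equivalence. For essential surjectivity, given any $kG$-module $M$, Theorem~\ref{thm:compres} yields a complete resolution of $M$ with coincidence index $n$, so $\Omega^n M$ is a kernel in the totally acyclic part and is therefore Gorenstein projective; applying the Frobenius shift $\Omega^{-n}$ inside $\GPmp(kG)$ to $\Omega^n M$ produces a Gorenstein projective module $X$ satisfying $\Omega^n X\cong \Omega^n M$ in $\modproj(kG)$, whence $X\cong M$ in $\Stab(kG)$, since $\Omega$ is invertible there. For full faithfulness, given $X,Y\in \GPmp(kG)$, the already-established equivalence $\Omega^0$ shows that $\Omega$ acts as an autoequivalence on $\GPmp(kG)$ (it corresponds to the invertible complex shift on $K_{\tac}(kG)$), so every transition map in the colimit defining
\[
\Hom_{\Stab(kG)}(X,Y)=\varinjlim_n \Hom_{\modproj(kG)}(\Omega^n X,\Omega^n Y)
\]
is already an isomorphism starting from $n=0$. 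Hence this colimit agrees with $\Hom_{\modproj(kG)}(X,Y)=\Hom_{\GPmp(kG)}(X,Y)$, which is precisely the map induced by $\iota$. This closes the cycle and forces $[0]$ to be an equivalence, with the compatibility of triangulated structures (where defined) inherited by construction.
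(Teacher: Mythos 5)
Your overall route differs from the paper's: where the paper establishes the equivalences not involving $\Stab(kG)$ by citing Beligiannis \cite[4.16]{beligiannis} and then verifies that the cyclic permutations of $\inc\Omega^0\compres[0]$ are naturally isomorphic to the identity, you instead give direct proofs that $\Omega^0$, $\compres$ and $\iota$ are individually equivalences (via Happel's classical argument, the comparison theorem for complete resolutions, and an analysis of the transition maps in the defining colimit of $\Hom_{\Stab}$). Those three arguments are essentially correct, and your treatment of $\iota$ in particular is clean: using that $\Omega$ restricts to an autoequivalence of $\GPmp(kG)$ to stabilise the colimit is the right idea.

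The genuine gap is in the final sentence. The assertion that three of the four functors being equivalences ``forces'' the fourth to be one is false as a matter of category theory: if $F:A\to B$ is any functor and $G:B\to A$ is known to be an equivalence, $F$ need not be an equivalence unless you also know $GF\cong\id_A$ (or $FG\cong\id_B$). What you actually need here is that $\iota\Omega^0\compres[0]$ is \emph{naturally} isomorphic to $\id_{\Stab(kG)}$; once that is in hand, $[0]$ is a quasi-inverse to $\iota\Omega^0\compres$ and hence an equivalence. This is precisely the point the paper checks: $\iota\Omega^0\compres[0]$ sends $M$ to the degree-zero kernel $\tilde M$ of its complete resolution, and high enough syzygies of $M$ and $\tilde M$ coincide, so they agree in $\Stab(kG)$. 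You have this observation at the object level hidden inside your essential-surjectivity argument for $\iota$ (your module $X$ is exactly $\tilde M$), but you never upgrade it to a natural isomorphism -- which it is, via the Gorenstein approximation $\alpha_M:\tilde M\to M$ of Definition~\ref{def:g-approx} -- nor do you observe that this is the step that actually makes $[0]$ an equivalence. As written, the argument for $[0]$ is a non sequitur and needs this one additional sentence to be complete.

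A minor indexing slip: $\sigma_{\geq 0}F_*$ is a projective resolution of $\Omega^{-1}(F_*)$, not of $\Omega^0(F_*)$; you want $\sigma_{\geq 1}F_*$ (or adjust the degree convention). This does not affect essential surjectivity of $\compres$.
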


\begin{proof}
	The equivalences not involving $\Stab(kG)$ are shown in
        \cite[4.16]{beligiannis} (replacing $\inc$ and $[0]$ by
        $[0]\inc$).
        The theorem will follow when we show that the cyclic permutations of the
        composition $\inc\Omega^0\compres[0]$ are naturally isomorphic
        to the identity. But the cases with $[0]$ and $\inc$ adjacent follow from \cite[4.16]{beligiannis} , so we only have
        to check $\inc\Omega^0\compres[0]$ itself. By definition, this sends $M$ to the
        degree 0 kernel $\tilde{M}$ in its complete resolution. But
        large enough syzygies of $M$ and $\tilde{M}$ are identical in
        $\Stab(kG)$.  
	\end{proof}

Of course, we can use these equivalences to define a triangulated
structure on $\Stab(kG)$. The distinguished triangles are all the
triangles isomorphic to a short exact sequence of modules. The shift
$\Omega^{-1}M$ is obtained by finding a complete resolution of the
module $M$ and then taking the kernel in degree $-1$. The kernel in
degree 0 we denote by $\tilde{M}$; from the definition of a complete
resolution, $\tilde{M}$ is Gorenstein projective and it comes with a
map of modules $\alpha_M \! : \tilde{M} \rightarrow M$ that is an
isomorphism in $\Stab(kG)$.

\begin{defn}\label{def:g-approx}
We call the map $\alpha_M:\tilde M\to M$ the Gorenstein approximation
of $M$.
\end{defn}

We can consider any of these categories to be the stable module
category of $kG$ and we will use the notation $\smod(kG)$ when we do
not wish to specify which one. We will use the symbol $\simeq$ to
denote isomorphism in $\smod(kG)$. 

A $kG$-module is called a lattice if it is projective over $k$. The
full subcategory of $\Stab(kG)$ on the lattices is also equivalent to
$\smod(kG)$, because $\Stab(kG)$ also contains the full subcategory
$\GPmp(kG)$ (and so
$\GPmp(kG)\subseteq\Stab(kG)\subseteq\smod(kG)$ where $\GPmp(kG)$ and
$\smod(kG)$ are equivalent categories). 

\begin{lemma}\label{lem:nat-onto}
	If a $kG$-module $M$ is Gorenstein projective then for any $kG$-module $N$ the natural map $\Hom_{kG}(M,N) \rightarrow \Hom_{\Stab(kG)}(M,N)$ is surjective.
	\end{lemma}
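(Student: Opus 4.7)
The plan is to reduce to the case where the target is also Gorenstein projective, in which case the claim becomes the tautological surjection $\Hom_{kG}\twoheadrightarrow\underline{\Hom}_{kG}$. The bridge between $N$ and a Gorenstein projective module is the Gorenstein approximation $\alpha_N\colon \tilde N\to N$ of Definition~\ref{def:g-approx}, which is a genuine morphism in $\Mod(kG)$ and is an isomorphism in $\Stab(kG)$; the module $\tilde N$ is Gorenstein projective by construction.

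First I would assemble the commutative square
\[
\begin{array}{ccc}
\Hom_{kG}(M,\tilde N) & \xrightarrow{(\alpha_N)_*} & \Hom_{kG}(M,N)\\
\downarrow & & \downarrow\\
\Hom_{\Stab(kG)}(M,\tilde N) & \xrightarrow{\;\;\cong\;\;} & \Hom_{\Stab(kG)}(M,N),
\end{array}
\]
where the vertical arrows are the natural passages to the stable category (they commute with post-composition by functoriality) and the bottom arrow is an isomorphism because $\alpha_N$ is a stable isomorphism. Since both $M$ and $\tilde N$ lie in $\GPmp(kG)$, Theorem~\ref{thm:catequiv} identifies $\Hom_{\Stab(kG)}(M,\tilde N)$ with $\underline{\Hom}_{kG}(M,\tilde N)=\Hom_{kG}(M,\tilde N)/\PHom_{kG}(M,\tilde N)$, and identifies the left vertical arrow with the corresponding quotient map, which is trivially surjective.

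Chasing the square, the composite $\Hom_{kG}(M,\tilde N)\to\Hom_{\Stab(kG)}(M,N)$ is then surjective, and because it factors as $\Hom_{kG}(M,\tilde N)\to\Hom_{kG}(M,N)\to\Hom_{\Stab(kG)}(M,N)$, the second factor is surjective too, which is the claim. The only point that genuinely requires checking is that the equivalence in Theorem~\ref{thm:catequiv}, when restricted to $\GPmp(kG)$, does intertwine the natural functor $\Mod(kG)\to\Stab(kG)$ with the evident quotient $\Hom_{kG}\twoheadrightarrow\underline{\Hom}_{kG}$; this is transparent from the definitions of $\inc$ and $\Omega^0$ in Definition~\ref{def:fctor}, since on a Gorenstein projective module one may choose the complete resolution so that $\tilde M=M$ and $\alpha_M=\operatorname{id}_M$.
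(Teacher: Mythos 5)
Your proof is correct and takes essentially the same route as the paper: both reduce to the Gorenstein projective target $\tilde N$ via the approximation $\alpha_N$, use the equivalence of $\GPmp(kG)$ with $\Stab(kG)$ from Theorem~\ref{thm:catequiv} to identify $\Hom_{\Stab(kG)}(M,\tilde N)$ with $\underline{\Hom}_{kG}(M,\tilde N)$, and exploit the tautological surjectivity of $\Hom_{kG}\twoheadrightarrow\underline{\Hom}_{kG}$. Your commutative-square formulation makes explicit the diagram chase that the paper leaves implicit, but the underlying argument is identical.
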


\begin{proof}
	Any $f \in \Hom_{\Stab(kG)}(M,N)$ is the image under $\inc$ of some $\tilde{f} \in \Hom_{\GPmp(kG)}(\tilde{M} , \tilde{N})$, which we can compose with the natural map $\alpha_N \! : \tilde{N} \rightarrow N$. Since $M$ itself is Gorenstein projective, we have $\tilde{M}  \cong  M$ in $\GPmp(kG)$, so we obtain a map $M \rightarrow N$ in $\modproj(kG)$, which must be the image of one in $\Mod(kG)$.
	\end{proof}

Recall that for $H \leq G$ there are two functors $\Ind_H^G$ and $\Coind_H^G$ from $\Mod(kH)$ to $\Mod(kG)$, the left and right adjoints of restriction. They satisfy the identity
\[
\Hom_{k}( M, \Coind_H^G N) \cong \Coind_H^G \Hom_{k}(M,N) \cong \Hom_{k}(\Ind^G_H,M), \quad M,N \in \Mod(kG),
\]
where $\Hom_k(M,N)$ is considered as a $kG$-module in the usual way and all restrictions are implicit.

\begin{lemma}
	\label{lem:coind}
\begin{enumerate}
	\item
	A $kG$-module is projective if and only if it is a summand of a module induced from a free module for the trivial subgroup; it is injective if and only if it is a summand of a module coinduced from an injective module for the trivial subgroup.
	\item
	A $kG$ module induced from the trivial subgroup has finite projective dimension. If $G$ has type $\Phi_k$ then a module coinduced from the trivial subgroup has finite projective dimension.
	\end{enumerate}
\end{lemma}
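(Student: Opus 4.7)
The plan is to dispatch the four sub-claims separately, using standard properties of induction and coinduction, with the coinduction part of (2) being the only step that requires the type $\Phi_k$ hypothesis.

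For part (1), the projective characterisation comes from the observation that $kG=\Ind_1^G k$: a free $kG$-module is nothing but $\Ind_1^G$ of a free $k$-module, and summands of free modules are precisely the projectives. For the injective characterisation, $\Res_1^G$ is exact, so its right adjoint $\Coind_1^G$ preserves injectivity; conversely, given an injective $kG$-module $M$, one embeds $\Res_1^G M$ in an injective $k$-module $I$ and transports via the adjunction to an injection $M\hookrightarrow\Coind_1^G I$ in $\Mod(kG)$, which then splits because $M$ is $kG$-injective.

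For part (2), the inductive case is short: given a $k$-module $N$, take a $k$-projective resolution of length at most $\gldim k$ and apply $\Ind_1^G=kG\otimes_k-$, which is exact and sends $k$-projectives to $kG$-projectives, giving $\projdim_{kG}\Ind_1^G N\le\gldim k$.

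The coinduction case is the substantive part and the one I expect to be the main obstacle. Let $N$ be a $k$-module and $F\le G$ a finite subgroup; by the definition of type $\Phi_k$ it suffices to show that $\Res_F\Coind_1^G N$ has finite projective dimension over $kF$. Decomposing $kG$ as a left $kF$-module into a direct sum of copies of $kF$ indexed by the coset space $F\backslash G$, and using that for the finite group $F$ we have $\Coind_1^F\cong\Ind_1^F=kF\otimes_k-$, one obtains a $kF$-isomorphism
\[
\Res_F\Coind_1^G N\;\cong\;\prod_{F\backslash G}\Coind_1^F N\;\cong\;\prod_{F\backslash G}(kF\otimes_k N)\;\cong\;kF\otimes_k\Bigl(\prod_{F\backslash G}N\Bigr),
\]
where the last isomorphism uses that $kF$ is a finitely generated free $k$-module, so $kF\otimes_k-$ commutes with arbitrary products. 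Since $\prod_{F\backslash G}N$ has projective dimension at most $\gldim k$ as a $k$-module, the induction statement already proved gives $\projdim_{kF}\Res_F\Coind_1^G N\le\gldim k$. As this bound is uniform over all finite $F\le G$, the type $\Phi_k$ hypothesis then yields $\projdim_{kG}\Coind_1^G N<\infty$.
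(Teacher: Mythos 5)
Your proof is correct and follows essentially the same route as the paper: restrict the coinduced module to a finite subgroup $F$, identify the restriction as a module coinduced from the trivial subgroup of $F$ via a Mackey-type decomposition, replace coinduction by induction since $F$ is finite, and invoke the already-proven induction statement together with the type $\Phi_k$ hypothesis. The only cosmetic difference is that you derive the Mackey decomposition by hand (splitting $kG$ over $kF$ and pulling the product through the tensor using the finite $k$-rank of $kF$) where the paper simply cites the Mackey formula.
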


\begin{proof}
	The first part is trivial. For the second part, if $M = \Ind^G_1X$, take a projective resolution of $X$ over $k$. This is of finite length since $k$ has finite global dimension. Because $\Ind^G_1$ is exact we can apply it to obtain a projective resolution of finite length for $M$.
	
	If $N = \Coind^G_1Y$, note that, since $G$ is assumed to be of type $\Phi_k$, we only have to check finite projective dimension on restriction to finite subgroups. If $F$ is a finite subgroup then the Mackey formula shows that the restriction of $N$ to $F$ is of the form $\Coind^F_1 Z$. 	But for finite groups, coinduction is equivalent to induction, so we can use the result for induced modules just proved.
	\end{proof}

Now for the proof of Theorem~\ref{thm:compres}. In \cite{GG}, two invariants are defined:
\[
\spli (kG) = \sup{} \{ \projdim_{kG} I \mid I \mbox{ injective} \}\quad
\silp (kG) = \sup \{ \injdim_{kG} P \mid P \mbox{ projective} \}.
\]
From Lemma~\ref{lem:coind} we know that $\projdim_{kG} I$ is finite for any injective module $I$. Thus $\projdim_{kG} I$ is bounded by $\findim kG$, which is finite by Lemma~\ref{lem:findim}, since $G$ is of type $\Phi_k$. We conclude that $\spli (kG)$ is finite. Now \cite[4.1]{GG} constructs the acyclic complex of projectives in the definition of a complete resolution (they call this a complete resolution even though they do not require it to be totally acyclic), see also \cite{ikenaga}. In fact, total acyclicity is automatic.

\begin{lemma}
	\label{lem:tac}
	If $G$ is of type $\Phi_k$ then any acyclic complex of modules is totally acyclic.
	\end{lemma}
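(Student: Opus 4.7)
Let $P_\bullet$ be an acyclic complex of projective $kG$-modules (the natural reading, since ``totally acyclic'' was defined only for such complexes) and let $Q$ be a projective $kG$-module. Writing $Z_n=\ker(P_n\to P_{n-1})$, projectivity of the $P_n$ together with the short exact sequences $0\to Z_{n+1}\to P_n\to Z_n\to 0$ gives $\Ext^{i+1}_{kG}(Z_n,Q)\cong\Ext^i_{kG}(Z_{n+1},Q)$ for $i\geq 1$, so the acyclicity of $\Hom_{kG}(P_\bullet,Q)$ is equivalent to $\Ext^1_{kG}(Z_n,Q)=0$ for all $n$. My strategy is to first establish this vanishing with $Q$ replaced by an injective, via Frobenius reciprocity, and then bootstrap to projective $Q$ using $\silp (kG)<\infty$.

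The first step is to show that $P_\bullet$ is split as a complex of $k$-modules. Every $P_n$ is $k$-projective, being a $kG$-summand of a $kG$-free module and $kG$ is $k$-free. If some kernel $Z_n$ had $\projdim_k Z_n=m\geq 1$, then the long exact Ext sequence over $k$ applied to $0\to Z_n\to P_{n-1}\to Z_{n-1}\to 0$ would yield $\projdim_k Z_{n-1}=m+1$; iterating backwards would contradict $\gldim k<\infty$. Hence every $Z_n$ is $k$-projective and each such short exact sequence splits over $k$. For any $k$-module $V$, Frobenius reciprocity then gives $\Hom_{kG}(P_\bullet,\Coind^G_1 V)\cong\Hom_k(P_\bullet,V)$, which is acyclic by $k$-splitness. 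Since by Lemma~\ref{lem:coind}(1) every injective $kG$-module is a summand of some $\Coind^G_1 V$, it follows that $\Ext^i_{kG}(Z_n,I)=0$ for every injective $I$ and every $i\geq 1$.

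To pass from injectives to the projective $Q$, I recall that the paragraph preceding the lemma already establishes $\spli (kG)\leq\findim kG<\infty$. By the theorem of Gedrich--Gruenberg \cite{GG} (see also Ikenaga \cite{ikenaga}), this forces $\silp (kG)=:S<\infty$, so that $\Ext^{S+1}_{kG}(-,Q)=0$. Combined with the dimension-shifting identity $\Ext^1_{kG}(Z_{n+S},Q)\cong\Ext^{S+1}_{kG}(Z_n,Q)$ noted at the start, this gives $\Ext^1_{kG}(Z_m,Q)=0$ for all $m$, as required. The main obstacle is the implication $\spli (kG)<\infty\Rightarrow\silp (kG)<\infty$: this is the one external input beyond what the paper has developed, the rest being a formal consequence of Frobenius reciprocity and dimension shifting.
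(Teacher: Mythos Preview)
Your proof is correct and follows essentially the same strategy as the paper's: both reduce the question to the finiteness of $\silp(kG)$, quoted from \cite{GG}, after which a dimension shift (equivalently, the paper's induction on injective dimension) finishes the argument. Your middle paragraph is superfluous, however: the vanishing $\Ext^i_{kG}(Z_n,I)=0$ for $I$ injective is immediate from the definition of injectivity---no $k$-splitting or Frobenius reciprocity is needed---and in any case your final dimension-shifting step never invokes it.
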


\begin{proof}
	Let $X_*$ be the acyclic complex. We claim that $\Hom_{kG}(X_*,M)$ is acyclic for $M$ of finite injective dimension. This is clearly true for $M$ injective and the general case is proved by an easy induction on $\injdim_{kG} M$. Thus all we need to know to finish the proof is that $\silp (kG)$ is finite and this is proved in \cite[2.4]{GG}.
	\end{proof}

\begin{remark}
	The proof of \cite[2.4]{GG} is the only place in this section where the argument requires $k$ to be noetherian.
	\end{remark}

Following \cite{benson, ikenaga}, we can define $\wh{\Ext}^i_{kG}(M,N)= \wh{\Hom}_{kG}(\Omega^iM,N)$ and $\wh{H}^i(G;N)=\wh{\Ext}^i_{kG}(k,N)$, for all $i \in \mathbb Z$. When $G$ has finite $\vcd_k$ then this is the same as the Tate-Farrell cohomology in \cite{brown}.

\section{Functors}
\label{sec:functors}

From now on we assume that all groups are of type $\Phi_k$ and we continue to assume that $k$ is noetherian of finite global dimension.

We want to know when and how a functor $F \! : \Mod(kG) \rightarrow \Mod(kH)$ induces a triangulated functor between the stable categories $\smod$. Most, but not all, of the problems only arise when $k$ is not a field. If $F$ is exact and takes projectives to projectives, then clearly it induces a functor of triangulated categories on $\smod$; this is easy to see using any version of $\smod$. In this way we obtain induction $\Ind^G_H$ and restriction $\Res^G_H$ of stable categories for any subgroup $H$ of $G$.

In fact, it suffices for $F$ to be exact and to take projective modules to modules of finite projective dimension. For a short exact sequence   $\Omega M \rightarrow P \rightarrow M$ in $\Mod(kG)$ with $P$ projective is taken to a short exact sequence $F(\Omega M) \rightarrow F(P) \rightarrow F(M)$ in $\Mod(kH)$; but $F(P)=0$ in $\Stab(kH)$, so $F(\Omega M)=\Omega F(M)$ in $\Stab(kH)$. If $f \in \Hom_{\Stab(kG)}(M,N)$ is represented by $f' \! : \Omega^n M \rightarrow \Omega^n N$, then $F(f') \! : \Omega^n F(M) \rightarrow \Omega^n F(N)$ determines an element $F(f) \in \Hom_{\Stab(kH)}(F(M),F(N))$.

In this way we obtain an inflation map $\Inf^{G/N}_G$ when $N$ is a normal subgroup of $G$ such that $G/N$ is of type $\Phi_k$ and any element of finite order in $N$ has order a unit in $k$. For given a projective $k(G/N)$-module $P$ inflate it to $G$; then for any finite subgroup $F \leq G$, $P$ is projective over $k(F/(F \cap N))$ and $|F \cap N|$ is a unit in $k$, so $P$ is projective over $kF$. Thus $P$ is of finite projective dimension over $kF$ so, since $G$ is of type $\Phi_k$, we find that $P$ is of finite projective dimension over $kG$.  

More generally, for any homomorphism $h \! : J \rightarrow K$ between groups of type $\Phi_k$ such that any element of finite order in the kernel has order a unit in $k$, there is a pullback map $h^* \! : \smod (kK) \rightarrow \smod (kJ)$.

\begin{lemma}\label{lem:f1}
	Change of scalars across a homomorphism of rings $f \! : k \rightarrow \ell$ in either direction, i.e.\ restriction $f^*$ or base change $f_*$, induce functors between $\Mod (kG)$ and $\Mod (\ell G)$ that take projective modules to modules of finite projective dimension (the assumptions at the beginning of this section holding for both $k$ and $\ell$). Also $f^*$ is exact.
	\end{lemma}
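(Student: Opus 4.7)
The plan is to treat the three assertions of the lemma separately: exactness of $f^*$, the behavior of $f_*$ on projectives, and the behavior of $f^*$ on projectives. Exactness of $f^*$ is formal, since a short exact sequence of $\ell G$-modules is already exact as a sequence of underlying abelian groups and the $kG$-action pulled back through $f$ is determined pointwise.

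For $f_* = \ell \otimes_k -$, I would give a one-line argument: $f_*$ is left adjoint to the exact functor $f^*$, and hence preserves projectives. Equivalently, $f_*(kG) \cong \ell G$ is a free $\ell G$-module and $f_*$ commutes with arbitrary direct sums and with taking summands, so it sends projective $kG$-modules to projective $\ell G$-modules; thus the conclusion for $f_*$ is in fact stronger than claimed.

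The only substantive step is showing that $f^*$ sends projective $\ell G$-modules to $kG$-modules of finite projective dimension. The key identification is that, as a $kG$-module, $\ell G \cong \bigoplus_{g \in G} \ell \cdot g$ is precisely $\Ind_1^G \ell$ with $\ell$ viewed as a $k$-module via $f$ (trivial action of the trivial subgroup). By Lemma~\ref{lem:coind}(2), $\Ind_1^G \ell$ has finite projective dimension over $kG$; tracing through the proof of that lemma, the bound is at most $\gldim k$, since a $k$-projective resolution of $\ell$ has length at most $\gldim k$ and $\Ind_1^G$ is exact. This uniform bound then propagates to arbitrary free modules $(\ell G)^{(I)}$, because a direct sum of length-$d$ projective resolutions is a length-$d$ projective resolution of the direct sum. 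Finally, any projective $\ell G$-module is a direct summand of a free one, and direct summands inherit the bound on projective dimension.

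I expect no real obstacle. The only point requiring brief care is that the bound be uniform so as to survive passage to infinite direct sums, which is why one wants the explicit bound $\gldim k < \infty$ rather than merely pointwise finiteness.
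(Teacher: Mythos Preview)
Your proposal is correct and follows essentially the same route as the paper. The paper's proof is slightly more compact: it observes that both $f^*$ and $f_*$ commute with $\Ind^G_1$, so a projective $\ell G$-module (a summand of $\Ind^G_1 X$ with $X$ free over $\ell$) is sent under $f^*$ to a summand of $\Ind^G_1(f^*X)$, to which Lemma~\ref{lem:coind}(2) applies directly---this absorbs your separate direct-sum/uniform-bound step, since $\Ind^G_1$ already handles arbitrary $k$-modules $X$.
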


\begin{proof}
	 By Lemma~\ref{lem:coind}(1), a projective module is a summand of one of the form $\Ind^G_1 X$ for $X$ free. Both $f^*$ and $f_*$ commute with $\Ind^G_1$; $f_*X$ is still free so $\Ind^G_1f_*X$ is free and for $f^*$ use Lemma~\ref{lem:coind}(2). Exactness of $f^*$ is clear.
	 \end{proof}
 
 \begin{lemma}\label{lem:f2}
 	For a given $kG$-module $M$, the following functors from $\Mod(kG)$ to itself take projective modules to modules of finite projective dimension. If $M$ is a lattice then the first two are exact.
 	\begin{enumerate} 
 		\item
 		$-\otimes_kM$,
 		\item
 		$\Hom_k(M,-)$,
 		\item
 		$\Hom_k(-,M)$.
 		\end{enumerate}
 		\end{lemma}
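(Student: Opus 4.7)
The plan is to handle the three functors one at a time. By Lemma~\ref{lem:coind}(1) every projective $kG$-module is a summand of some $\Ind^G_1 X$ with $X$ a free $k$-module, so (by additivity of all three functors and the fact that summands inherit finite projective dimension) it suffices to verify the conclusion on such an $\Ind^G_1 X$. The workhorse throughout is Lemma~\ref{lem:coind}(2): under the standing hypothesis that $G$ has type $\Phi_k$, any $kG$-module induced \emph{or} coinduced to $G$ from a $k$-module at the trivial subgroup has finite projective dimension.

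For part (1) I would use the standard projection formula
\[
(\Ind^G_1 X)\otimes_k M \;\cong\; \Ind^G_1(X\otimes_k \Res^G_1 M),
\]
verified by the explicit assignment $(h\otimes x)\otimes m\mapsto h\otimes(x\otimes h^{-1}m)$, which exhibits the tensor product as induced from a $k$-module. For part (3) the tensor-hom adjunction in $k$-modules gives a natural isomorphism $\Hom_k(\Ind^G_1 X, M)=\Hom_k(kG\otimes_k X,M)\cong \Hom_k(kG,\Hom_k(X,M))$, which is quickly checked to be $kG$-equivariant with respect to the diagonal actions on both sides (the triviality of $G$ on $X$ makes this short). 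A further twist $\phi\mapsto(g\mapsto g^{-1}\phi(g))$ furnishes a $kG$-isomorphism $\Hom_k(kG,Z)\cong\Coind^G_1(\Res^G_1 Z)$ for any $kG$-module $Z$, so $\Hom_k(\Ind^G_1 X,M)$ is identified with a coinduction of a $k$-module and Lemma~\ref{lem:coind}(2) applies.

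The main obstacle I anticipate is part (2), where $\Hom_k(M,-)$ fails to commute with infinite direct sums and no clean analogue of the above identities is available. To circumvent this I would invoke the type $\Phi_k$ hypothesis and reduce to checking finite projective dimension after restriction to each finite subgroup $F\le G$. A coset decomposition of $kG$ as a left $kF$-module yields $\Res^G_F\Ind^G_1 X\cong\Ind^F_1 Y$ with $Y=X^{(F\backslash G)}$ still a free $k$-module; since $\Ind^F_1=\Coind^F_1$ for the finite group $F$, the identity stated just before Lemma~\ref{lem:coind} gives
\[
\Hom_k(\Res^G_F M,\Coind^F_1 Y)\cong\Coind^F_1\Hom_k(\Res^G_F M,Y),
\]
a coinduction to $F$ of a $k$-module, which has finite projective dimension over $kF$ by Lemma~\ref{lem:coind}(2) applied to the finite group $F$. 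Finally, the exactness assertion is routine: when $M$ is a lattice it is both $k$-flat and $k$-projective, making $-\otimes_k M$ and $\Hom_k(M,-)$ exact, whereas exactness of $\Hom_k(-,M)$ would require $M$ to be $k$-injective and is not claimed.
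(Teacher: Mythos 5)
Your proof is correct and follows essentially the same route as the paper's: reduce to $\Ind^G_1X$, use the projection formula for (1), use $\Hom_k(\Ind^G_1X,M)\cong\Coind^G_1\Hom_k(X,M)$ for (3), and for (2) restrict to a finite subgroup $F$, identify the restriction of $\Ind^G_1X$ as $\Ind^F_1Y=\Coind^F_1Y$, pull the $\Coind^F_1$ through $\Hom_k$, and then invoke the type $\Phi_k$ property of $G$. The paper compresses the restriction step for (2) to a citation of the Mackey formula, but the underlying computation is the one you spell out.
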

 		
 \begin{proof}
 Any projective module is a summand of $\Ind^G_1X$ for some free $k$-module $X$.
 
 1) It is well known that $(\Ind^G_1X) \otimes_k M \cong \Ind^G_1 (X \otimes_kM)$. Now use Lemma~\ref{lem:coind}.
 
 2) For any finite subgroup $F \leq G$, $\Res^G_F \Hom_k(M,\Ind^G_1X)
 \cong \Hom_k(M,\Ind^F_1Y)$ for some $k$-module $Y$, by the Mackey
 formula. Now,
 $$\Hom_k(M,\Ind^F_1Y) \cong \Hom_k(M,\Coind^F_1Y) \cong \Coind^F_1
 \Hom_k(M,Y),$$
 so we can use Lemma~\ref{lem:coind} to see
 that $\Res^G_F \Hom_k(M,\Ind^G_1X)$ has finite projective dimension.
 Since $G$ is of type $\Phi_k$ we can deduce that $\Hom_k(M,\Ind^G_1X)$ is also of finite projective dimension.
 
 3) $\Hom_k(\Ind^G_1X,M) \cong \Coind^G_1 \Hom_k(X,M)$, so we can use Lemma~\ref{lem:coind} again.
 
 The exactness statement is trivial.
 \end{proof}

The lemmas above together with the discussion at the beginning of this
section show that if $M$ is a lattice then $-\otimes_kM$ and
$\Hom_k(M,-)$ induce triangulated functors of the stable category as
before; so does $f^*$. In the other cases, we see that each of the
functors in Lemmas~\ref{lem:f1} and \ref{lem:f2} naturally defines a functor
$\GPmp(kG) \rightarrow \Stab(kG)$ (with the obvious variation for
$f_*$), because now the triangles are isomorphic to short exact
sequences, and all modules are projective over $k$ so short exact
sequences are split and exactness is automatic. By Theorem~\ref{thm:catequiv}, we can regard this as a functor from $\smod(kG)$ to itself.
This second approach also makes it transparent what the functor does to morphisms, in particular for $\Hom_k(-,M)$.

By Theorem~\ref{thm:catequiv}, the categories $\smod(kG)$ and
$\Stab(kG)$ are equivalent, so we can regard any of these functors as a functor from
$\Stab(kG)$ to itself. However, it might not be given by the usual formula
on objects that are not Gorenstein projective, because we need to
replace a module $M$ by its Gorenstein approximation $\tilde{M}$
before applying the formula (see Definition~\ref{def:g-approx}). 

\begin{example} Let $C$ be a cyclic group of order $p$ and
  $\hat{\mathbb Z}_p$ the $p$-adic integers. Consider the trivial
  module $\mathbb F_p$ for $\hat{\mathbb Z}_pC$. An easy calculation
  (cf.\ \cite[2.6]{S3}) shows that $\tilde{\mathbb F}_p \simeq
  \hat{\mathbb Z}_p \oplus \Omega \hat{\mathbb Z}_p$, hence $
  \tilde{\mathbb F}_p  \otimes_{\hat{\mathbb Z}_p} \tilde{\mathbb F}_p
  \simeq (\hat{\mathbb Z}_p)^2 \oplus (\Omega \hat{\mathbb
    Z}_p)^2$. By definition, $\mathbb F_p \otimes_{\hat{\mathbb Z}_p}
  \mathbb F_p \simeq \tilde{\mathbb F}_p  \otimes_{\hat{\mathbb Z}_p}
  \tilde{\mathbb F}_p \simeq ({\mathbb F}_p)^2$, which is not what
  might have been expected.  
	\end{example}

For any $H \leq G$, coinduction $\Coind^G_H$ also induces a functor of
stable categories. For a projective $kH$-module $P$ and any finite
subgroup $F \leq G$, we have
$$\Res^G_F \Coind^G_H P = \prod_{g \in F
  \backslash G/H} \Coind^F_{F \cap {}^gH} {}^g \Res^H_{F^g \cap H}P.$$
Since coinduction is equivalent to induction for finite groups,
the factors in the product are clearly projective, thus so is the
product, by Lemma~\ref{lem:prod} below. It follows that $\Coind^G_H P$
must be projective.

\begin{lemma}
	\label{lem:prod}
	If the $kG$-modules $P_i$, $i \in I$, are projective then the product $\prod _{i \in I} P_i$ is of finite projective dimension.
	\end{lemma}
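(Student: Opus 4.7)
The plan is to exploit the type $\Phi_k$ hypothesis on $G$ to reduce everything to the case of a finite subgroup, where products of projectives behave well because induction and coinduction coincide.

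First, I would pass to restrictions. By the definition of type $\Phi_k$, it suffices to show that for every finite subgroup $F\leq G$ the restriction $\Res^G_F \prod_i P_i$ has finite projective dimension over $kF$. Since $kG$ is free as a $kF$-module, restriction preserves projectivity, and it evidently commutes with arbitrary products, so
\[
\Res^G_F \prod_i P_i \;\cong\; \prod_i \Res^G_F P_i
\]
is a product of projective $kF$-modules. We have therefore reduced to the finite-group case.

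Next, I would re-express each factor via coinduction from the trivial subgroup. By Lemma~\ref{lem:coind}(1), each $\Res^G_F P_i$ is a summand of $\Ind^F_1 X_i$ for some free $k$-module $X_i$. Since $F$ is finite we have $\Ind^F_1 X_i \cong \Coind^F_1 X_i = \Hom_k(kF,X_i)$, and because $\Hom_k(kF,-)$ commutes with products,
\[
\prod_i \Ind^F_1 X_i \;\cong\; \Hom_k\bigl(kF,\, \prod_i X_i\bigr) \;=\; \Coind^F_1\bigl(\prod_i X_i\bigr),
\]
so $\prod_i \Res^G_F P_i$ is a summand of $\Coind^F_1(\prod_i X_i)$. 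Any finite group is trivially of type $\Phi_k$, so Lemma~\ref{lem:coind}(2) applied to $F$ tells us that $\Coind^F_1(\prod_i X_i)$ has finite projective dimension over $kF$; the same then holds for its summand $\prod_i \Res^G_F P_i$.

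Invoking the type $\Phi_k$ property of $G$ finally yields $\projdim_{kG}\prod_i P_i<\infty$. The only point of care is the commutation of products with $\Coind^F_1$, which is immediate from the identification $\Coind^F_1 = \Hom_k(kF,-)$; everything else reduces to applying results already established in the previous sections.
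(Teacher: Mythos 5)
Your proof is correct and takes essentially the same route as the paper: reduce to a finite subgroup $F$ via the type $\Phi_k$ hypothesis, realise each projective factor as a summand of $\Ind^F_1 X_i$ with $X_i$ free over $k$, use $\Ind^F_1\cong\Coind^F_1$ for $F$ finite, and commute $\Coind^F_1$ past the product. If anything your finish is slightly more careful: you invoke Lemma~\ref{lem:coind}(2) to conclude $\Coind^F_1(\prod_i X_i)$ has finite projective dimension, whereas the paper asserts that $\Ind^F_1\prod_i X_i$ is projective outright, which would require $\prod_i X_i$ to be projective over $k$ --- not automatic for infinite products (e.g.\ $\prod_{\mathbb N}\mathbb Z$ over $\mathbb Z$) --- so your wording is the correct one.
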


\begin{proof}
	Because $G$ is of type $\Phi_k$, it suffices to prove the case when $G$ is finite. 
	For each $i \in I$ we can write $P_{i}$ as a summand of
        $\Ind^G_1 X_i$ for some free $k$-module $X_i$. Thus $\prod _{i
          \in I} P_i$ is a summand of $\prod _{i \in I} \Ind^G_1
        X_i$. But since $G$ is finite, $\Ind^G_1 X_i$ is isomorphic to
        $\Coind^G_1 X_i$, and $\Coind^G_1$, being a right adjoint,
        commutes with products. Hence $\prod _{i \in I} \Ind^G_1 X_i
        \cong \Ind^G_1 \prod _{i \in I}  X_i$, which is projective. 
	\end{proof}

It is easy to check that $\Ind^G_H$ is still the left adjoint and $\Coind^G_H$ is still the right adjoint of restriction on the stable categories.

\begin{prop}
	$\smod(kG)$ has products and coproducts, i.e.,
 \begin{align*}
\textstyle	\prod_i \shom ( A_i,B) & \cong \shom ( \oplus_i A_i, B) \\
\textstyle	\prod_j \shom (A, B_j) & \cong \shom (A, \textstyle \prod_j B_j),
	\end{align*}
	where $\prod$ and $\oplus$ are the usual ones in $\Mod$.
	\end{prop}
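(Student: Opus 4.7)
The plan is to use the Tate-style formula $\shom(A,N)\cong H^0(\Hom_{kG}(F_*, N))$, valid once $F_*$ is a complete resolution of $A$; this identification follows from the equivalences of Theorem~\ref{thm:catequiv} and the standard Tate cohomology construction. By Theorem~\ref{thm:compres} every module admits such an $F_*$ in the present setting. Both claimed isomorphisms then reduce to formal properties of $\Hom$, direct sums, products, and cohomology of cochain complexes of abelian groups.

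For the coproduct formula, let $F^i_*$ be a complete resolution of $A_i$ for each $i$. I claim that $\oplus_i F^i_*$ is a complete resolution of $\oplus_i A_i$: its terms are projective (a direct sum of projectives is projective), it is exact (direct sums are exact in $\Mod(kG)$), and $\Hom_{kG}(\oplus_i F^i_*, Q)\cong\prod_i\Hom_{kG}(F^i_*, Q)$ is acyclic for any projective $Q$, since a product of acyclic cochain complexes of abelian groups is acyclic. The construction cited in the proof of Theorem~\ref{thm:compres} yields coincidence indices bounded by a single constant depending only on $G$, so the direct sum of the maps $F^i_*\to P^i_*$ to projective resolutions assembles into a complete resolution of $\oplus_i A_i$. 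Now $\Hom_{kG}(\oplus_i F^i_*, B)\cong\prod_i\Hom_{kG}(F^i_*, B)$, and passing to $H^0$, which commutes with products in the category of abelian groups, gives $\shom(\oplus_i A_i, B)\cong\prod_i\shom(A_i, B)$.

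For the product formula, let $F_*$ be a complete resolution of $A$. The natural equality $\Hom_{kG}(F_*,\prod_j B_j)=\prod_j\Hom_{kG}(F_*, B_j)$ holds at the level of cochain complexes of abelian groups, and passing to $H^0$ yields $\shom(A,\prod_j B_j)\cong\prod_j\shom(A, B_j)$.

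Both arguments are purely formal once complete resolutions are in hand, so the only substantive input is Theorem~\ref{thm:compres}, which invokes the type-$\Phi_k$ hypothesis. The one mild subtlety is the uniform bound on coincidence indices needed in the coproduct case; this is inherent in the construction in \cite[4.1]{GG} cited there. A more direct but clumsier alternative works entirely inside $\Stab(kG)$: the coproduct case follows from the module-level iso $\Hom_{kG}(\oplus_i A_i, B)\cong\prod_i\Hom_{kG}(A_i, B)$ descending to $\ul{\Hom}$ (since $\oplus$ of projectives is projective) and from $\Omega$ commuting with direct sums, while the product case requires the additional observation that by Lemma~\ref{lem:prod} a product of projectives has finite projective dimension, so the obstruction to descending to $\ul{\Hom}$ is killed in the limit defining $\Stab(kG)$.
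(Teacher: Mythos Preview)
Your argument is correct, but it takes a different route from the paper. The paper works directly in $\GPmp(kG)$: one may assume all modules are Gorenstein projective, and then the only thing to check is that the quotient by $\PHom$ respects the universal properties. For coproducts this is immediate, since if each $f_i:A_i\to B$ factors through a projective $P_i$ then $\oplus_i f_i$ factors through the projective $\oplus_i P_i$. For products the paper observes that if each $g_j:A\to B_j$ factors through a projective $Q_j$ then the map $A\to\prod_j B_j$ factors through $\prod_j Q_j$, which has finite projective dimension by Lemma~\ref{lem:prod}, hence is zero in $\smod(kG)$. Your sketched ``alternative'' at the end is exactly this argument, transported to $\Stab(kG)$.

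Your main approach via complete resolutions and the Tate formula is a legitimate and genuinely different proof. It has the pleasant feature that the product case becomes entirely formal, with no appeal to Lemma~\ref{lem:prod}; on the other hand, the coproduct case now carries the weight, since you need $\oplus_i F^i_*$ to be a complete resolution of $\oplus_i A_i$, which requires a uniform bound on the coincidence indices. You correctly note that this bound comes from the construction in \cite{GG}; it can also be extracted from the paper itself, since $\silp(kG)<\infty$ (proof of Lemma~\ref{lem:tac}) forces $\Omega^n M$ to be Gorenstein projective for all $M$ once $n\geq\silp(kG)$. The identification $\shom(A,N)\cong H^0(\Hom_{kG}(F_*,N))$ that you invoke is standard Tate cohomology (it amounts to $\ul{\Hom}_{kG}(\tilde A,N)\cong\shom(A,N)$ for $\tilde A$ Gorenstein projective), but it is not stated explicitly in the paper, so a sentence justifying it would strengthen the write-up.
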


\begin{proof}
	We may assume that all the modules are Gorenstein projective,
        so we are working in $\modproj(kG)$. The only part that is not
        entirely routine is to check that if each $f_i \! : A_i
        \rightarrow B$ factors through a projective $P_i$ then the
        induced map $\oplus_i A_i \rightarrow B$ factors through a
        projective. But it factors through $\oplus_i P_i$, which is
        projective.  
	
	If the $g_j \! : A \rightarrow B_j$ each factor through a
        projective $Q_j$ then the induced map $A \rightarrow \prod_j
        B_j$ factors through $\prod_j Q_j$. This has finite projective
        dimension, by Lemma~\ref{lem:prod}, so the induced map is 0 in
        $\smod(kG)$. 
	\end{proof}
	 
\section{Decompositions}\label{sec:decomp}

The Eckmann-Hilton argument, on sets with two monoid structures (also
called the Eckmann-Hilton theorem, see
\cite{EH}), applies and gives us the following fundamental result.

\begin{prop}\label{prop:end-comm}
Let $G$ be a group of type $\Phi_k$. The ring
$\send_{kG}(k)$ and the group $\saut_{kG}(k)$ are 
commutative. The product under composition agrees with the product
under tensor product.
\end{prop}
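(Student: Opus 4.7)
The plan is to run the classical Eckmann--Hilton argument, with the main preliminary work being to check that tensor product really gives a second binary operation on $\send_{kG}(k)$ with the correct unit.

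First I would verify that $\otimes_k k$ is a well-defined operation on morphisms in $\smod(kG)$. The trivial module $k$ is projective (hence a lattice) over $k$, so by Lemma~\ref{lem:f2} and the discussion immediately following it, both $-\otimes_k k$ and $k\otimes_k-$ induce triangulated endofunctors of $\smod(kG)$; via the canonical natural isomorphism $k\otimes_k k\cong k$ we then regard $f\mapsto f\otimes \id_k$ and $g\mapsto \id_k\otimes g$ as endomorphisms of $\send_{kG}(k)$. Define the tensor product of two stable endomorphisms $f,g$ of $k$ as $f\otimes g:=(f\otimes\id_k)\circ(\id_k\otimes g)$, or equivalently by tensoring representing homomorphisms (Lemma~\ref{lem:nat-onto} lets us pick actual module maps once we work in $\GPmp(kG)$); either way the construction is functorial in each variable and compatible under the canonical identifications.

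Next I would collect the three bits of data needed for Eckmann--Hilton on the set $E=\send_{kG}(k)$:
\begin{enumerate}
\item Composition $\circ$ is a unital product with unit $\id_k$.
\item Tensor product $\otimes$ is a unital product with the same unit $\id_k$, using the identification $\id_k\otimes\id_k=\id_{k\otimes k}=\id_k$.
\item The two operations satisfy the interchange law
\[
(f_1\otimes g_1)\circ(f_2\otimes g_2)=(f_1\circ f_2)\otimes(g_1\circ g_2),
\]
which holds already at the level of ordinary module maps by bifunctoriality of $\otimes_k$, and therefore descends to $\smod(kG)$.
\end{enumerate}

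With these in place, the standard Eckmann--Hilton manipulation gives immediately, for all $f,g\in E$,
\[
f\circ g=(f\otimes\id_k)\circ(\id_k\otimes g)=f\otimes g=(\id_k\otimes g)\circ(f\otimes\id_k)=g\circ f,
\]
so composition and tensor agree and both are commutative. Restriction to units yields the statement for $\saut_{kG}(k)$. The main obstacle I anticipate is purely bookkeeping: being careful that the coherence isomorphisms $k\otimes_k k\cong k$ are compatible with stable-category equivalences so that the two operations are genuinely defined on the same set with the same unit; once that is pinned down the Eckmann--Hilton step is a one-line calculation.
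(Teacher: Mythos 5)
Your proof is correct and takes essentially the same approach as the paper, which simply invokes the Eckmann--Hilton argument (citing \cite{EH}) without spelling out the verification that the two monoid structures share a unit and satisfy the interchange law; your write-up fills in precisely those details.
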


Unlike ${\End}_{kG}(k)$, the $k$-algebra $\send_{kG}(k)$ can be quite complicated, as we shall see.

The next lemma is very useful.

\begin{lemma}
	\label{lem:iso}
	Let $G$ be a group of type $\Phi_k$ and let $f \! : M \rightarrow N$ be a morphism in $\smod(kG)$ that restricts to a stable isomorphism on any finite subgroup. Then $f$ is a stable isomorphism.
	\end{lemma}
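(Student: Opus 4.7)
The plan is to embed $f$ in a distinguished triangle $M\xrightarrow{f} N\to C\to \Omega^{-1}M$ in $\smod(kG)$ and argue that the cone $C$ is stably zero. Since in any triangulated category a morphism is an isomorphism precisely when its cone vanishes, this will prove the lemma. Crucially, $C\simeq 0$ in $\smod(kG)$ is equivalent to $\projdim_{kG}C<\infty$, by the characterisation of the zero object in $\Stab(kG)$ noted just after its definition.

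The cone can be realised as a concrete module via Theorem~\ref{thm:catequiv}. First replace $M$ and $N$ by their Gorenstein approximations $\tilde M,\tilde N$, and use Lemma~\ref{lem:nat-onto} to lift $f$ to an honest module map $\tilde f\colon\tilde M\to\tilde N$ in $\GP(kG)$. Then form the mapping cone in the Frobenius category $\GP(kG)$: choose an injection $\tilde M\hookrightarrow I$ into a projective $I$ with cokernel $\Omega^{-1}\tilde M$, and take the pushout along $\tilde f$ to produce a short exact sequence
\[0\to \tilde N\longrightarrow C\longrightarrow \Omega^{-1}\tilde M\to 0\]
in $\GP(kG)$ whose image in $\smod(kG)$ is the desired triangle.

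Now comes the main point. Restriction $\Res^G_F$ to a finite subgroup $F\leq G$ is exact and sends projectives to projectives, so by the discussion at the start of Section~\ref{sec:functors} it induces a triangulated functor $\smod(kG)\to\smod(kF)$. Applying it to the short exact sequence above identifies $\Res^G_F C$ as the cone of $\Res^G_F f$ in $\smod(kF)$. Since $\Res^G_F f$ is by hypothesis a stable isomorphism, this cone vanishes, i.e.\ $\projdim_{kF}C<\infty$ for every finite subgroup $F$ of $G$. Invoking the definition of type $\Phi_k$ yields $\projdim_{kG}C<\infty$, so $C\simeq 0$ in $\smod(kG)$ and $f$ is a stable isomorphism.

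The only mildly delicate point is the concrete formation of the cone and its naturality under restriction; once the triangulated structure on $\smod(kG)$ has been pinned down using the equivalences of Theorem~\ref{thm:catequiv}, both are automatic, and the lemma becomes essentially a direct translation of the definition of $\Phi_k$.
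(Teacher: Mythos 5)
Your argument is exactly the paper's: form the cone of $f$, note it is stably zero upon restriction to every finite subgroup, and invoke the definition of type $\Phi_k$ to conclude it is stably zero over $G$. You have merely spelled out in more detail how to realise the cone concretely via Gorenstein approximations, which the paper leaves implicit.
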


\begin{proof} Consider the cone of $f$. It is stably 0 on restriction to any finite subgroup, so by the definition of type $\Phi_k$ it is stably 0 over $G$.
	\end{proof}

\begin{prop}
	Let $G$ be a finite group and let $\Delta$ be a
        $G$-CW-complex. Suppose that the reduced homology groups $\tilde{H}_i(\Delta^H;k)=0$ for $i \geq 0$ and any non-trivial subgroup $H \leq G$. 
	\begin{enumerate}
		\item
		The augmented cellular chain complex $\tilde{C}_*(\Delta)$ is
                chain homotopy equivalent to a bounded-below complex
                of projective $kG$-modules.
              \item If
                $H_i(\Delta;k)=0$ for large enough $i$ then
                $\tilde{C}_*(\Delta)$ is chain homotopy equivalent to
                a bounded complex of projective $kG$-modules. 
		\end{enumerate}
	\end{prop}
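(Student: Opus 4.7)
The starting observation is that each $\tilde{C}_i(\Delta)$ is a permutation $kG$-module (a direct sum $\bigoplus_\sigma \Ind_{G_\sigma}^G k_\sigma$ indexed by $i$-cell orbits, with possible orientation sign twists), and in particular is $k$-free. Since $\tilde{C}_*(\Delta)^H = \tilde{C}_*(\Delta^H)$ as chain complexes, the hypothesis translates to: for every non-trivial subgroup $H\le G$ the fixed-point subcomplex $\tilde{C}_*(\Delta)^H$ has vanishing homology in degrees $\ge 0$.

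For part~(1) the plan is to approximate $\tilde{C}_*(\Delta)$ by a projective complex via a diagonal tensor product. Let $P_\bullet\to k$ be a projective resolution of the trivial $kG$-module and form $Q_* = \operatorname{Tot}(P_\bullet\otimes_k\tilde{C}_*(\Delta))$ with diagonal $G$-action. Each term $P_p\otimes_k \tilde{C}_q(\Delta)$ is projective: for $P$ projective over $kG$ and $M$ a $k$-free $kG$-module, the untwisting $g\otimes m\mapsto g\otimes g^{-1}m$ identifies $kG\otimes_k M$ (diagonal action) with $\Ind_1^G M_{\mathrm{triv}}$, so that $P\otimes_k M$ is a summand of a free $kG$-module. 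Thus $Q_*$ is a bounded-below projective complex, and the augmentation $P_\bullet\to k$ induces a chain map $\epsilon\colon Q_*\to \tilde{C}_*(\Delta)$ that is a quasi-isomorphism by the $k$-flatness of $\tilde{C}_*(\Delta)$. The least formal step is upgrading $\epsilon$ to a chain homotopy equivalence. I would build a chain-level inverse $\sigma\colon\tilde{C}_*(\Delta)\to Q_*$ by induction on cell orbits of $\Delta$ ordered by decreasing stabilizer: free orbits admit tautological lifts because $kG$ is projective, while an orbit with stabilizer $H\ne 1$ has lifting obstructions computable in terms of the fixed-point subcomplex $\tilde{C}_*(\Delta^H)$, where they vanish by hypothesis. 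Equivalently, the cone of $\epsilon$ is not just acyclic but chain-contractible.

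For part~(2) I would apply (1) to obtain $Q_*$ bounded-below projective, chain equivalent to $\tilde{C}_*(\Delta)$, with $H_i(Q_*) = 0$ for $i > N$. The tail $\cdots\to Q_{N+2}\to Q_{N+1}\to K\to 0$ with $K = \im(d\colon Q_{N+1}\to Q_N)$ is exact and hence a projective resolution of $K$. If $\projdim_{kG} K<\infty$, Lemma~\ref{le:fd} forces $\projdim_{kG} K\le \gldim k$; truncating the tail to a resolution of length $\le\gldim k$ and splicing onto $Q_N\to\cdots\to Q_0\to\cdots$ then produces the desired bounded projective complex. Finiteness of $\projdim_{kG}K$ is checked by restriction to subgroups (using that finite $G$ is automatically of type $\Phi_k$): over the trivial subgroup it follows from $\gldim k<\infty$, and over a non-trivial $H$ from the acyclicity of $\tilde{C}_*(\Delta^H)$ controlling the restriction of the syzygy tail.

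The principal obstacle is the chain-level lifting in part~(1): one must produce an honest chain homotopy inverse to $\epsilon$, i.e.\ show the cone is chain-contractible (not merely acyclic), and this is where the fixed-point acyclicity is genuinely needed. Modulo this step, part~(2) is a standard truncation argument relying on Lemma~\ref{le:fd}.
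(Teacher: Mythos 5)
Your diagonal tensor product construction $Q_* = \operatorname{Tot}(P_\bullet\otimes_k\tilde{C}_*(\Delta))$ is a sound way to produce a bounded-below complex of projectives together with a quasi-isomorphism $\epsilon\colon Q_*\to\tilde{C}_*(\Delta)$, and you correctly identify that the real content of part (1) is upgrading this quasi-isomorphism to a chain homotopy equivalence. That upgrade, however, \emph{is} the theorem (it is Webb's theorem in the form generalised by Symonds), and your sketch of it is not a valid argument. The proposed ``induction on cell orbits ordered by decreasing stabilizer'' does not have an obvious meaning at the chain level: $\tilde{C}_*(\Delta)$ is graded by cell dimension and the differentials mix orbit types, so there is no filtration by stabiliser order that one can induct over while preserving the complex structure. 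The assertion that ``lifting obstructions are computable in terms of $\tilde{C}_*(\Delta^H)$, where they vanish by hypothesis'' is precisely the statement one needs to prove; nothing in the sketch explains why the obstructions lie where you say they do or why the hypothesis kills them. The paper's own proof of part (1) is a citation to \cite{Sy2} (and \cite[\S 6]{Sy1}), which uses Bredon cohomology and Mackey functor techniques to establish exactly this contractibility of the cone; that machinery is what systematically converts the fixed-point hypothesis into a chain contraction, and it is not replaced by the tensor-product trick.

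Part (2) has a separate and more concrete gap. You propose to show $\projdim_{kG}K<\infty$ (where $K$ is the image of a high-degree differential) by checking finiteness of projective dimension on subgroups, ``using that finite $G$ is automatically of type $\Phi_k$.'' But for a finite group $G$ the type $\Phi_k$ condition is vacuous: $G$ is itself a finite subgroup, so ``finite projective dimension on restriction to every finite subgroup'' includes restriction to $G$ itself and gives no reduction whatsoever. The handwave that acyclicity of $\tilde{C}_*(\Delta^H)$ ``controls the restriction of the syzygy tail'' is also unsubstantiated: $\tilde{C}_*(\Delta)\res{G}{H}$ and $\tilde{C}_*(\Delta^H)$ are different complexes, and part (1) restricted to $H$ only tells you $Q_*\res{G}{H}$ is a bounded-below complex of $kH$-projectives, which you already knew. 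The paper avoids all of this by invoking \cite[6.5]{Sy1}: the complex $P_*$ produced there is a summand of a complex that is \emph{split} in high degrees, hence is itself split in high degrees, and therefore is homotopy equivalent to a good truncation of itself. That splitting is extra structural information coming from the Mackey functor construction, not something you can extract from the bare statement of part (1) plus the vanishing of high homology.
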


The original version of this result is due to Webb \cite{webb} for $k$ a complete $p$-local ring and $\Delta$ finite dimensional. There is another proof of the first part by Bouc \cite{bouc} for $k$ a field and general $\Delta$.

\begin{proof}
	The first part is proved in \cite{Sy2}; it is the case $H=1$ of the statement that $q$ is a homotopy equivalence that appears just before the lemma (there is no assumption on $k$; see the remark at the end). There is a similar proof in \cite[6.6]{Sy1}, but note that in 6.4 and 6.6 there the word bounded should not appear unless $\Delta$ is finite dimensional. 
	
	For the second part, let $C_*=\tilde{C}_*(\Delta)$ and let $P_*$ be the bounded below complex of projectives. Let $\tau_{< n}C_*$ be a good truncation of $C_*$, truncated in some degree $n$ greater than the degree of any non-zero homology group \cite[1.2.7]{weibel}. Then we have a quasi-isomorphism $P_* \rightarrow \tau_{< n}C_*$. By \cite[6.5]{Sy1}, $P_*$ is a summand of a complex that is split in high degrees, so it is split in high degrees itself. Thus $P_*$ is homotopy equivalent to a good truncation of itself.
	\end{proof}

\begin{thm}
	Let $G$ be a group of type $\Phi_k$ and let $\Delta$ be a $G$-CW-complex such that $H_i(\Delta;k)=0$ for large enough $i$. Suppose that $\tilde{H}_i(\Delta^H;k)=0$ for $i \geq 0$ and any non-trivial finite subgroup $H \leq G$. Then the chain complex ${C}(\Delta)$, considered as an element of $D^b(\Mod(kG))/K^b(\Proj(kG))$, is equal to $k$. The same is true if $k$ is $p$-local and we only require $H_i(\Delta^H;k)=0$ for non-trivial $p$-groups $H$. In these cases $k$ decomposes as a direct sum of non-zero  pieces corresponding to those path components of $\Delta/G$ for which some cell of $\Delta$ above them is fixed by an element of finite order not a unit in $k$.
	\end{thm}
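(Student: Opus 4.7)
The plan is to show that the augmentation $\varepsilon \colon C(\Delta) \to k$, viewed as a chain map to the complex concentrated in degree zero, is an isomorphism in $D^b(\Mod(kG))/K^b(\Proj(kG))$. Its cone is, up to a shift, the augmented cellular chain complex $\tilde{C}(\Delta)$, so by the equivalence of Theorem~\ref{thm:catequiv} it suffices to prove $\tilde{C}(\Delta) \simeq 0$ in $\smod(kG)$. Lemma~\ref{lem:iso} reduces this to verifying vanishing after restriction to every finite subgroup. For such an $F \leq G$, the hypothesis on fixed-point sets $\Delta^H$ passes to the non-trivial subgroups of $F$, and boundedness of the homology is preserved under restriction; the preceding proposition then produces a bounded complex of projective $kF$-modules chain homotopy equivalent to $\tilde{C}(\Delta)|_F$, which forces the required vanishing in $\smod(kF)$. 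The $p$-local strengthening goes through by the same reduction, using the $p$-local form of the preceding proposition, where only non-trivial $p$-subgroups need have acyclic fixed-point sets.

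For the decomposition, write $\Delta = \bigsqcup_\alpha \Delta_\alpha$, with $\Delta_\alpha$ the $G$-invariant subcomplex lying over a path component $\alpha$ of $\Delta/G$, so that
\[
k \simeq C(\Delta) = \bigoplus_\alpha C(\Delta_\alpha)
\]
in $\smod(kG)$. If $\Delta_\alpha$ contains no cell whose stabiliser has order not a unit in $k$, then every chain module $C_i(\Delta_\alpha)$ is a direct sum of permutation modules $\Ind^G_H k$ with $|H|$ invertible in $k$; the averaging idempotent $|H|^{-1}\sum_{h \in H} h$ exhibits $k$ as a summand of $kH$, so $\Ind^G_H k$ is a summand of $kG$ and hence projective. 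Thus $C(\Delta_\alpha)$ is a bounded-below complex of projectives with bounded homology, so it has finite projective dimension and vanishes in the Verdier quotient.

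Conversely, suppose $\Delta_\alpha$ contains a cell $\sigma$ fixed by a finite subgroup $F \leq G$ with $|F|$ not a unit in $k$. I would pick a prime $p$ that is not a unit in $k$ and divides $|F|$, then replace $F$ by a Sylow $p$-subgroup $P \leq F$, which still fixes $\sigma$. If $C(\Delta_\alpha) \simeq 0$, then $C(\Delta_\alpha)|_P$ is a bounded complex of projective $kP$-modules, so $\wh H^*(P;C(\Delta_\alpha)) = 0$. On the other hand, Smith theory for the $p$-group $P$ acting on $\Delta_\alpha$ identifies this hypercohomology with (a localisation of) $\wh H^*(P;k) \otimes H_*(\Delta_\alpha^P;k)$, which is non-zero because $\sigma \in \Delta_\alpha^P$ ensures $H_0(\Delta_\alpha^P;k) \neq 0$: a contradiction. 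Getting the Smith-theory step to run cleanly over a general noetherian $k$ of finite global dimension, rather than only over a field or a $p$-local ring, is the main obstacle and will require the reduction through the Sylow subgroup $P$ together with a careful localisation argument.
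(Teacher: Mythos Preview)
Your first paragraph, reducing the augmentation to a stable isomorphism via Lemma~\ref{lem:iso} and the preceding proposition on finite subgroups, is exactly the paper's argument.

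For the vanishing direction of the decomposition, your observation that each $\Ind^G_H k$ with $|H|$ a unit is a summand of $kG$ (hence projective over $kG$) is correct and pleasant, but the next sentence is a genuine gap: a bounded-below complex of projectives with bounded homology need \emph{not} lie in $K^b(\Proj(kG))$. Indeed, a projective resolution of the trivial module $k$ is such a complex, yet it represents $k$, which is non-zero in the quotient whenever $|G|$ is not a unit. The paper avoids this by restricting $C(\Delta_\alpha)$ to each finite subgroup $F$, where the chain modules are still projective, and then invoking the proof technique of the previous proposition (specifically the splitting-in-high-degrees argument cited from \cite{Sy1}) to get a bounded complex of $kF$-projectives; the type-$\Phi_k$ hypothesis then transfers vanishing back to $kG$.

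For the non-zero direction you are working far too hard, and your acknowledged obstacle with Smith theory over a general $k$ is unnecessary. The paper's argument is a one-liner: if $\langle g\rangle$ fixes a cell of $\Delta_\alpha$ with $|g|$ finite and not a unit in $k$, then $\Delta_\alpha^{\langle g\rangle}\neq\emptyset$, so the augmentation $C(\Delta_\alpha)\to k$ has a section over $k\langle g\rangle$ (map a point to a fixed vertex). Thus $k$ is a stable summand of $C(\Delta_\alpha)\!\downarrow_{\langle g\rangle}$, and $k\not\simeq 0$ in $\smod(k\langle g\rangle)$ because $\send_{k\langle g\rangle}(k)\cong\hat H^0(\langle g\rangle;k)=k/|g|k\neq 0$. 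No localisation or Sylow reduction is needed.
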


\begin{proof}
	By the previous proposition, for any finite subgroup $F$ ($p$-subgroup if $k$ a $p$-local ring), the restriction of the augmented chain complex is 0 in $D^b(\Mod(kF))/K^b(\Proj(kF))$. By Lemma~\ref{lem:iso}, the augmented chain complex is 0 in $D^b(\Mod(kG))/K^b(\Proj(kG))$. Thus the augmentation is an isomorphism from ${C}(\Delta)$ to $k$.
	
	Let $X$ be the part of $\Delta$ corresponding to a given component of $\Delta/G$. If $X^{\langle g \rangle} \ne \emptyset$ for some $g$ then the augmentation of $C(X)$ is split over $\langle g \rangle$. Since $k \ne 0$ over $k\langle g \rangle$ when $|g|$ is finite but not a unit (because $\hat{H}^0(\langle g \rangle;k)=k/|g|k$), we see that $C(X) \ne 0$. If no such $g$ exists, then the restriction of $C(X)$ to any finite subgroup $F$ is a complex of projectives. By the proof of the previous proposition, it is homotopy equivalent to a bounded complex of projectives, so is 0 in $D^b(\Mod(kF))/K^b(\Proj(kF))$ for all $F$, hence 0 in $D^b(\Mod(kG))/K^b(\Proj(kG))$, since $G$ is of type $\Phi_k$.
	\end{proof}

An obvious candidate for $\Delta$ when $k$ is $p$-local is the Quillen complex $\Delta (\mathcal S_p(G))$, the simplicial complex constructed from chains of non-trivial $p$-subgroups, or the Brown complex $\Delta (\mathcal A_p(G))$, the simplicial complex constructed from chains of non-trivial elementary abelian $p$-subgroups. These are known to satisfy the condition on fixed point sets and in fact they are equivariantly homotopy equivalent. The latter is finite dimensional if the $p$-rank of $G$ is finite, so in this case they both satisfy the condition on their homology groups.

For general $k$ we can always use $\Delta(\mathcal F(G))$, constructed from chains of non-trivial finite subgroups.

In the $p$-local case, this decomposition is the best possible. Let $I$ index the components of $\Delta(\mathcal S_p(G))/G$ and let $k_{e_i}$ denote the part of $C(\Delta(\mathcal S_p(G)))$ corresponding to component $i$. Then $k \simeq \oplus _i k_{e_i}$. Let $e_i \in \send_{kG}(k)$ be the idempotent corresponding to projection onto $k_{e_i}$; then $\send_{kG}(k) = \prod \send_{kG}(k_{e_i})=\prod e_i \send_{kG}(k)$.

\begin{thm}
	When $k$ is $p$-local, the primitive idempotents of $\send_{kG}(k)$ correspond to the path components of $\Delta(\mathcal S_p(G))/G$, or equivalently to the equivalence classes of non-trivial $p$-subgroups of $G$ under the equivalence relation generated by inclusion and conjugation.
	\end{thm}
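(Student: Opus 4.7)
The previous theorem provides a decomposition $k\simeq\bigoplus_i k_{e_i}$ with orthogonal non-zero idempotents $e_i\in\send_{kG}(k)$ indexed by the components of $\Delta(\mathcal S_p(G))/G$, giving a product decomposition $\send_{kG}(k)=\prod_i R_i$ with $R_i=e_i\send_{kG}(k)$. The task is to show each $R_i$ contains only the trivial idempotents $0$ and $e_i$.

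The plan is to test idempotents by restriction to non-trivial $p$-subgroups. For any such $P$, the Quillen-style contraction $Q\mapsto QP$ (defined whenever $P$ normalises $Q$, since then $QP$ is again a $p$-group) shows via $Q\leq QP\geq P$ that $\Delta(\mathcal S_p(G))^P$ is contractible, and the same relation exhibits $P$ and $Q$ in the same component, so $\Delta(\mathcal S_p(G))^P$ lies entirely in the component $X_i$ containing $P$. Hence $k_{e_i}|_P\simeq k$ and $k_{e_j}|_P\simeq 0$ for $j\neq i$. Since $k$ is $p$-local, $\send_{kP}(k)=\wh H^0(P;k)=k/|P|k$ is a local ring, so every idempotent $f\in\send_{kG}(k)$ satisfies $\phi_f(P):=f|_P\in\{0,1\}$. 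For $P\leq Q$, the identity $f|_P=(f|_Q)|_P$ is induced by the canonical projection $k/|Q|k\to k/|P|k$, which preserves $\{0,1\}$; together with conjugation-invariance this shows $\phi_f$ is constant on each component of $\Delta(\mathcal S_p(G))/G$.

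Writing $f=\sum_i f_i$ with $f_i=e_if\in R_i$, the identity $f_i|_P=(e_if)|_P$ together with the behaviour of the $e_j$ forces either $f_i|_P=0$ for every non-trivial $p$-subgroup $P$, or $f_i|_P=e_i|_P$ for every such $P$. In the first case I would conclude $f_i=0$ by applying Lemma~\ref{lem:iso} to $1-f_i$: on each finite subgroup $F\leq G$ the restriction is a stable isomorphism, either because $\send_{kF}(k)=k/|F|k=0$ when $|F|$ is a unit in $k$, or because restriction to a Sylow $p$-subgroup $S\leq F$ is an isomorphism $\send_{kF}(k)\cong k/p^n k\cong\send_{kS}(k)$ (with $p^n$ the $p$-part of $|F|$) identifying $(1-f_i)|_F$ with the unit $1$. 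Hence $1-f_i$ is stably invertible over $G$, and then $f_i(1-f_i)=0$ forces $f_i=0$. In the second case the same argument applied to the complementary idempotent $e_i-f_i$ gives $f_i=e_i$. Therefore every idempotent of $\send_{kG}(k)$ is of the form $\sum_{i\in S}e_i$ for a unique subset $S$ of components, and the primitive ones are precisely the $e_i$, in bijection with the equivalence classes of non-trivial $p$-subgroups under inclusion and conjugation.

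The step I expect to be most delicate is the finite Sylow reduction in the detection argument: verifying that for $k$ merely $p$-local (rather than a field) the restriction $\send_{kF}(k)\to\send_{kS}(k)$ along a Sylow $p$-subgroup $S\leq F$ is an isomorphism with trivial coefficients. The underlying fact that $k/|F|k\cong k/p^n k\cong k/|S|k$ because the prime-to-$p$ part of $|F|$ is a unit is elementary, but this descent is the engine that converts the component-wise information $\phi_f$ on $p$-subgroups into a global conclusion through Lemma~\ref{lem:iso}.
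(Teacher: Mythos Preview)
Your proof is correct and follows essentially the same strategy as the paper: restrict to nontrivial $p$-subgroups where $\send_{kP}(k)\cong k/|P|k$ is local, observe that the value of an idempotent is constant on each component, and invoke Lemma~\ref{lem:iso} to globalise. The only difference is cosmetic---you apply Lemma~\ref{lem:iso} to $1-f_i\in\send_{kG}(k)$ rather than to $f_1\in\send_{kG}(k_e)$---and you are in fact more explicit than the paper about the Sylow reduction: the paper checks the hypothesis of Lemma~\ref{lem:iso} only on $p$-subgroups and leaves the passage to arbitrary finite subgroups implicit, whereas you spell it out via $\send_{kF}(k)\cong k/|F|k\cong k/|S|k\cong\send_{kS}(k)$.
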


\begin{proof}
	Let $X_i$ be the part of $\Delta(\mathcal S_p(G))$ corresponding to component $i \in I$. If $P$ is a $p$-subgroup that appears in this part of $\mathcal S_p(G)$ then $X_i^P \ne \emptyset$, so the augmentation $C(X_i) \rightarrow k$ is split over $kP$ and so $k \downarrow _P \mid C(X_i)\downarrow_P$. But $e_i$ acts as the identity on $C(X_i)$, so $e\downarrow_P$ acts as the identity on $k\downarrow_P$, i.e.\ $\rest^G_P(e_i)=1$. The idempotents are orthogonal, so $\rest^G_P(e_j)=0$ for $j \ne i$. 
	
	Suppose that $e=e_i$ decomposes in $\send_{kG}(k)$ as a sum of idempotents, $e=f_1 + f_2$. For any finite $p$-group $P$, we have $\send_{kP}(k)= k/|P|k$, which is still local if $P \ne 1$, so the only idempotents are 0 and 1. Thus $\rest_P(f_1)$ is 0 or 1. This choice is preserved by restriction and conjugation, so it is constant on all subgroups $P$ in the component $i$; say $\rest_P(f_1)=1$ and $\rest_P(f_2)=0$. Thus $f_1 \! : k_e \rightarrow k_e$ is an isomorphism on restriction to any subgroup in $i$ and also on restriction to any subgroup in any other component because then the restriction of $k_e$ is 0. By Lemma~\ref{lem:iso}, $f_1$ is an automorphism and so, being an idempotent, $f_1=e$ and thus $f_2=0$.
	\end{proof} 

A version of this result was obtained for $\hf 1$ groups by Cornick
and Leary \cite{CL}; this class includes groups of unbounded
$p$-rank. They use the complex that appears in the definition of $\hf 1$ as $\Delta$; the condition on the fixed point spaces of finite $p$-groups is satisfied, by Smith Theory. It is worth noting that, by work of
Freyd \cite{freyd}, idempotents in $\send_{kG}(k)$ split $k$ stably;
our construction avoids quoting this.

\begin{example}\label{ex:cp*cp}
Let $G=A*B$ be the free product of two groups $A$ and
$B$, both of type $\Phi_k$.
We know that $G$ is of type $\Phi_k$ and that
any non-trivial finite subgroup of $G$ is conjugate to a subgroup of $A$ or $B$ but not both \cite[II.A3]{brown}.  
Thus
there are at least two components provided both $A$ and $B$ contain elements of order not a unit in $k$. 

Since $\send_{kG}(k) \cong \hat{H}^0(G;k)$, we can use the complete cohomology version of \cite[VI.3]{brown} to see that $\send_{kG}(k) \cong \send_{kA}(k) \times \send_{kB}(k)$. If both $A$ and $B$ are finite then $\send_{kG}(k) \cong k/|A|k \times k/|B|k$.

This method of calculation can be generalised to any group of type $\Phi_k$ that acts on a tree.
\end{example}

\section{Invertible modules}

Throughout this section, $G$ denotes a group of type $\Phi_k$.

\begin{defn}
	\label{def:invertible}
A $kG$-module $M$ is {\em invertible} if there exists a $kG$-module $N$
such that $M\otimes N\cong k$ in $\wh{\Mod}(kG)$. 
The tensor product $-\otimes-=-\otimes_k-$ of Section~\ref{sec:functors} equips the set of isomorphism
classes of invertible $kG$-modules in $\wh{\Mod}(kG)$ with the structure
of an abelian group, which we denote by $T_k(G)$ and call the {\em group of
  invertible $kG$-modules}.
\end{defn}

For finite groups, the endotrivial modules are defined in the same way, except that the modules are required to be finitely generated.

If we take $M$ and $N$ to be Gorenstein projective then we have $M \otimes N \cong \tilde{k}$ in $\GPmp(kG)$, with the usual tensor product (recall that the tensor product of two Gorenstein projective modules is Gorenstein projective since $\gldim k < \infty$). 

First we check that for finite groups the basic theory of endotrivial modules carries over to our context, where $k$ need not be a field and the modules need not be finitely generated. Note that for a finite group a module being Gorenstein projective is equivalent to it being a lattice and if a module is finitely generated then it has a finitely generated Gorenstein projective approximation. This is because a sufficiently high syzygy is finitely generated (since $k$ is noetherian) and Gorenstein projective; it is a kernel in a complete resolution with finitely generated terms by \cite[VI.2.6]{brown}.

The following result is stated for $k$ a field in \cite[2.1]{BBC}).

\begin{thm}\label{thm:BBC} 
Let $G$ be a finite group and $M$ an
  invertible $kG$-module.
Then $M$ is stably a summand of a finitely generated $kG$-module and the natural map $M \rightarrow M^{**}$ is a stable isomorphism. If $k$ is a complete local ring and $M$ is a lattice then $M$ decomposes as a $kG$-module as a direct sum $M'\oplus P$ with $M'$ a finitely generated lattice and $P$ projective.
\end{thm}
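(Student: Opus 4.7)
Plan: After replacing $M$ and its stable inverse $N$ by their Gorenstein approximations, which for finite $G$ are lattices, Lemma~\ref{lem:stab-iso} lifts the stable isomorphism $M\otimes_k N\simeq k$ to a module isomorphism $M\otimes_k N\oplus P\cong k\oplus Q$ with $P,Q$ projective and $Q$ finitely generated. For the first assertion, since $k$ is cyclic as a $kG$-module the image of the split monomorphism $k\hookrightarrow M\otimes_k N\oplus P$ is finitely generated, so it is contained in $M_0\otimes_k N\oplus P$ for some finitely generated submodule $M_0\leq M$ (tensor product commutes with direct limits in the first variable). Restricting the splitting shows $k$ is a direct summand of $M_0\otimes_k N\oplus P$, and hence a stable summand of $M_0\otimes_k N$. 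Since $-\otimes_k M$ preserves stable summands (it is an endofunctor of $\smod(kG)$ by Lemma~\ref{lem:f2}), $M\simeq k\otimes_k M$ is a stable summand of $M_0\otimes_k N\otimes_k M\simeq M_0$. Replacing $M_0$ by a high syzygy we may further assume $M_0$ is itself a finitely generated lattice, completing the first assertion.

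For the second assertion we use naturality of $\eta\colon \id\to(-)^{**}$. Writing the stable summand relation as $M\oplus X\simeq M_0$ in $\smod(kG)$, Lemma~\ref{lem:f2} supplies $(-)^{**}$ as an additive endofunctor of $\smod(kG)$, giving $M^{**}\oplus X^{**}\simeq M_0^{**}$ stably; naturality identifies $\eta_M\oplus\eta_X$ with $\eta_{M_0}$ under this isomorphism. Since $M_0$ is a finitely generated projective $k$-module, $\eta_{M_0}$ is a genuine isomorphism, hence a stable isomorphism. A direct sum of morphisms is a stable isomorphism if and only if each summand is (the cone of a direct sum is the direct sum of cones), so $\eta_M$ is a stable isomorphism.

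For the last part, assume $k$ is complete local and $M$ is a lattice. By the first part and Lemma~\ref{lem:stab-iso}, there is an honest module isomorphism $M\oplus Y\cong L\oplus R$ in $\Mod(kG)$ with $L$ a finitely generated lattice and $R$ projective. Over a complete local ring every finitely generated indecomposable $kG$-lattice has local endomorphism ring (the endomorphism ring is module-finite over $k$, hence semi-local and complete, and then local exactly when the module is indecomposable); similarly every indecomposable projective $kG$-module has local endomorphism ring, and by Bass's theorem, applied to the semi-perfect ring $kG$, every projective $kG$-module is a direct sum of such indecomposable projectives. Writing $L=\bigoplus L_i$ (a finite sum) and $R=\bigoplus R_j$, Azumaya's theorem asserts that any direct summand of $L\oplus R$ is isomorphic to a direct sum of a subcollection of the $L_i$ and $R_j$. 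Applying this to $M$ and grouping the finitely many non-projective $L_i$ into $M'$ and the remaining $R_j$ into $P$ produces the decomposition $M=M'\oplus P$.

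The main obstacle is the application of Azumaya's theorem in the third part: one must verify that every finitely generated indecomposable $kG$-lattice over a complete local ring has local endomorphism ring, and that the (possibly infinite) projective $R$ admits a decomposition into indecomposables with local endomorphism rings. Both are classical but need to be cited with care. A minor technical point in the second part is checking that $(-)^{**}$ is additive on $\smod(kG)$, which follows from the additivity of $\Hom_k(-,k)$ on the module level combined with its passage to the stable category via Lemma~\ref{lem:f2}.
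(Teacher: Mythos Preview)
Your first two parts track the paper's argument: locate a finitely generated $M_0\leq M$ with $k$ a stable summand of $M_0\otimes N$, then transfer reflexivity from a finitely generated lattice to its stable summand $M$. Your treatment of the double-dual step is in fact more careful than the paper's, which simply asserts that the property passes to stable summands.

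The third part has the right strategy but a genuine gap. Azumaya's theorem (e.g.\ \cite[12.6]{AF}) gives \emph{uniqueness} of a decomposition into indecomposables with local endomorphism rings; it does not assert that an arbitrary direct summand of such a sum is itself isomorphic to a partial sum. That stronger conclusion is exactly what the Crawley--J{\o}nsson--Warfield Theorem \cite[26.6]{AF} supplies, under the additional hypothesis that each indecomposable piece be countably generated---which holds here since your $L_i$ and $R_j$ are all finitely generated. The paper invokes precisely this theorem. Your closing paragraph misdiagnoses the obstacle: verifying local endomorphism rings (and Bass's decomposition of $R$) is fine, but local endomorphism rings alone do not force arbitrary summands to respect the decomposition. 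Replace Azumaya by Crawley--J{\o}nsson--Warfield and your argument goes through and matches the paper's.
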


\begin{proof}
We may assume that $M$ is Gorenstein projective. Let $f \! : k \rightarrow M \otimes N$ be a stable isomorphism; since $k$ is a lattice, $f$ is a genuine homomorphism.  Write $f(1) = \sum_i m_i \otimes n_i$ for $m_i \in M$, $n_i \in N$, and let $M'$ be the $kG$-submodule of $M$ generated by the $m_i$. We have a stable isomorphism $k \stackrel{f}{\rightarrow} f(k) \leq M' \otimes N \subseteq M \otimes N \stackrel{f^{-1}}{\rightarrow} k$, so $k \mid M' \otimes N$ stably. It follows that stably $M \mid M' \otimes N \otimes M \simeq M'$.

Let $L=\tilde{M'}$, so $M \mid L$ stably. $L$ is a finitely generated lattice, so the natural map $L \rightarrow L^{**}$ is a stable isomorphism. This property is inherited by stable summands, in particular $M$. 

Somehow we need to deduce the last part. The authors of \cite{BBC} probably had in mind the method of Rickard \cite[3.2]{rickard}, but this is for $k$ a field; instead we use the Crawley-J{\o}nsson-Warfield Theorem \cite[26.6]{AF}, which is a version of the Krull-Schmidt Theorem that applies in this case. Since both $L$ and $M$ are Gorenstein projective, we know that $M$ is a summand as a module of some $U=L \oplus (\oplus_{i \in I} kG)$. Both $L$ and $kG$ are finitely generated and $k$ is noetherian, so they can both be expressed as a finite sum of finitely generated indecomposables and thus so can $U$, say $U = \oplus_{j \in J} X_j$, with only finitely many $X_j$ not projective. By \cite[6.10]{CR}, since $k$ is a complete local ring, each of these indecomposables has a local endomorphism ring and is clearly countably generated, so the Crawley-J{\o}nsson-Warfield Theorem tells us that $M$ is isomorphic to  $\oplus_{j \in K} X_j$ for some $K \subseteq J$.
\end{proof} 

This theorem, together with Lemma~\ref{lem:stab-iso}, yields the following result.

\begin{cor} For a finite group  $G$ and a complete local (noetherian) ring $k$ we get the same group $T_k(G)$ whether we consider invertible modules in $\smod(kG)$, as we do in this paper, or the classical group of endotrivial modules in $\underline{\mod}(kG)$.
	\end{cor}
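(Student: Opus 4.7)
The plan is to construct a comparison homomorphism between the two candidate Picard groups and show it is an isomorphism, using Theorem~\ref{thm:BBC} to pass from arbitrary invertibles to finitely generated lattices, and Lemma~\ref{lem:stab-iso} to refine stable isomorphisms of finitely generated modules.

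First I would define a map $\varphi$ from the classical group of endotrivial modules into $T_k(G)$ by sending the stable class in $\ul{\mod}(kG)$ of a finitely generated invertible module $M$ to its class in $\smod(kG)$. Since stable equivalence in $\ul{\mod}(kG)$ means $M \oplus P \cong N \oplus Q$ for (finitely generated) projectives $P, Q$, which is evidently an isomorphism in $\smod(kG)$, the map $\varphi$ is well-defined; and because both groups use the same tensor product over $k$ and the unit object is $k$ on both sides, $\varphi$ is a group homomorphism.

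For surjectivity, given an invertible module $M$ in $\smod(kG)$, I would replace $M$ by its Gorenstein approximation $\tilde{M}$, which, since $G$ is finite, is a lattice. Theorem~\ref{thm:BBC} then yields a decomposition $\tilde{M} = M' \oplus P$ with $M'$ a finitely generated lattice and $P$ projective, so $M \simeq M'$ in $\smod(kG)$. Applying the same procedure to a stable inverse $N$ of $M$ produces a finitely generated lattice $N'$ with $M' \otimes_k N' \simeq k$ in $\smod(kG)$. To conclude that $M'$ defines a class in the classical group I must upgrade this stable isomorphism from $\smod(kG)$ to $\ul{\mod}(kG)$, and the same upgrade will establish injectivity of $\varphi$.

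The key observation is that, for $G$ finite, every lattice is Gorenstein projective, so $\GPmp(kG)$ sits as a full subcategory of $\ul{\Mod}(kG)$, and by Theorem~\ref{thm:catequiv} it is equivalent to $\smod(kG)$. Hence for any two lattices $L_1$, $L_2$ the Hom sets in $\smod(kG)$ and in $\ul{\Mod}(kG)$ coincide; in particular, the isomorphism $M' \otimes_k N' \simeq k$ in $\smod(kG)$ is an isomorphism in $\ul{\Mod}(kG)$. Because $M'$, $N'$ and $k$ are all finitely generated, the second clause of Lemma~\ref{lem:stab-iso} then refines this to an isomorphism in $\ul{\mod}(kG)$, witnessed by finitely generated projective summands. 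This produces a preimage of $[M]$ under $\varphi$ and, applied to a pair of finitely generated lattices which happen to be stably isomorphic in $\smod(kG)$, shows that $\varphi$ is injective. The only real point of care is the finite-generation bookkeeping: one must arrange that both $M'$ and its stable inverse are finitely generated before invoking the refined form of Lemma~\ref{lem:stab-iso}, and this is exactly what the complete local hypothesis buys through Theorem~\ref{thm:BBC}.
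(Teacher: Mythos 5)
Your proof is correct and is essentially the argument the paper intends: the paper simply asserts that the Corollary follows from Theorem~\ref{thm:BBC} together with Lemma~\ref{lem:stab-iso}, and your write-up is exactly the natural way to turn those two ingredients into a comparison isomorphism, using the Gorenstein approximation to land in lattices, Theorem~\ref{thm:BBC} to split off a finitely generated lattice, the identification of lattices with Gorenstein projectives to see that morphisms in $\smod(kG)$ between lattices are honest stable morphisms, and the finitely generated clause of Lemma~\ref{lem:stab-iso} to descend to $\underline{\mod}(kG)$.
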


The next result is well known when $M$ is finitely generated and $k$ is a field. Note that we do not assume that the trivial module $k$ is stably indecomposable, so we cannot assume that an invertible module is stably indecomposable (a decomposition can occur when $k= \mathbb Z$, for example).

\begin{thm}
	Let $G$ be a finite group and let $M$ and $N$ be $kG$-modules such that $M \otimes N \simeq k$.
	Then:
	\begin{enumerate}
		\item $N \simeq M^*$,
		\item the evaluation map $\ev \! :M \otimes M^* \rightarrow k$ is a stable isomorphism and
		\item the natural map $\theta_M \! : M\otimes M^* \rightarrow \End_k(M)$ is a stable isomorphism.
		\end{enumerate}
	\end{thm}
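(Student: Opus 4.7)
The plan is to deduce all three statements from the abstract principle that an invertible object in a closed symmetric monoidal category is automatically rigid, by first reducing to the finitely generated lattice case and then invoking uniqueness of adjoints.

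The first step is to apply Theorem~\ref{thm:BBC} to replace $M$ and $N$ by stably isomorphic finitely generated $kG$-lattices; the $k$-duality functor $(-)^*=\Hom_k(-,k)$ descends to $\smod(kG)$ since $(kG)^*\cong kG$ is projective for $G$ finite, and $\ev$ and $\theta_M$ are natural in $M$, so this reduction is harmless. With $M$ now a finitely generated lattice, statement (3) is immediate: $\theta_M$ is already a genuine isomorphism of $kG$-modules, this being the standard identity for any finitely generated projective $k$-module.

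For (1) and (2) the key is the tensor-hom adjunction $(-\otimes_k M,\,\Hom_k(M,-))$ on $\Mod(kG)$. Since $M$ is now a finitely generated lattice we may identify $\Hom_k(M,-)\cong M^*\otimes_k-$ naturally, and both functors descend to triangulated endofunctors of $\smod(kG)$ by Section~\ref{sec:functors} (they are exact and take projectives to modules of finite projective dimension, cf.\ Lemma~\ref{lem:f2}), carrying the adjunction with them. The hypothesis $M\otimes N\simeq k$ says exactly that $-\otimes_k M$ is a self-equivalence of $\smod(kG)$ with quasi-inverse $-\otimes_k N$. Since the right adjoint of an equivalence is canonically a quasi-inverse, we obtain $-\otimes_k M^*\simeq -\otimes_k N$ as triangulated endofunctors; evaluating at $k$ gives $M^*\simeq N$, which is (1). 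The counit of the adjunction at the object $k$ is, up to the symmetry braiding, the evaluation map $\ev$; but the counit of an adjoint equivalence is an isomorphism, so $\ev$ is a stable isomorphism, establishing (2).

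The main technical point I foresee is verifying carefully that this tensor-hom adjunction, together with its unit and counit, genuinely descends to a triangulated adjunction on $\smod(kG)$. This is essentially bookkeeping with the machinery of Section~\ref{sec:functors} and the equivalences of Theorem~\ref{thm:catequiv} rather than a deep obstacle; everything else in the argument is formal category theory.
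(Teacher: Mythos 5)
Your proposal hinges on an opening reduction that Theorem~\ref{thm:BBC} does not support. That theorem states only that an invertible $M$ is stably a \emph{summand} of a finitely generated $kG$-module; the stronger conclusion that $M$ decomposes as (finitely generated lattice) $\oplus$ (projective) -- which is what you would need to ``replace $M$ and $N$ by stably isomorphic finitely generated $kG$-lattices'' -- is only established there under the additional hypothesis that $k$ is a complete local ring and $M$ a lattice. The present theorem is stated for general noetherian $k$ of finite global dimension, and the paper explicitly flags that $k$ need not be a field and $M$ need not be finitely generated (nor even stably indecomposable). So the reduction fails in general, and this is not a cosmetic issue: your argument for part (3) rests entirely on it, since $\theta_M$ is a genuine isomorphism only when $M$ is a finitely generated projective $k$-module.

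The categorical core of your argument -- invertible objects in a closed symmetric monoidal category are rigid, so (1) and (2) follow once the tensor-hom adjunction descends to $\smod(kG)$ -- is sound in spirit and does not actually require $M$ to be finitely generated, only Gorenstein projective (i.e.\ a lattice for $G$ finite), which one may assume by taking Gorenstein approximations. But you flag the verification that the adjunction descends as ``bookkeeping'' rather than carrying it out; it does require checking that for $M$ a lattice, a map $A\otimes M\to B$ factors through a projective if and only if the adjoint map $A\to\Hom_k(M,B)$ does, which uses Lemma~\ref{lem:f2} together with the fact that maps from a Gorenstein projective factoring through a module of finite projective dimension are stably zero. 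Moreover, for part (3) you would need the additional rigidity fact that the canonical map $M^\vee\otimes B\to\Hom_k(M,B)$ is an isomorphism for dualizable $M$, which you did not invoke, instead relying on the invalid finite-generation reduction. The paper avoids all of this by directly exploiting $M\mid L$ stably for a finitely generated lattice $L$: it applies $\shom_{kG}(L,-)$ to squeeze out the complementary summands using noetherianness of $k$, and deduces the statement about $\theta_M$ by splitting off $\theta_L$, which is an honest isomorphism.
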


\begin{proof}
	We can assume that $M$ and $N$ are Gorenstein projective and,
        by the previous theorem, stably isomorphic to their double
        duals. From the stable isomorphism $f \! :M \otimes N
        \rightarrow k$, realised as a genuine homomorphism,  we obtain
        a map $f' \! : N \rightarrow M^*$ such that $f = \ev ( \id_M
        \otimes f')$. Thus $k \mid M \otimes M^*$ stably and so $N
        \mid N \otimes M \otimes M^* \simeq M^*$ stably. By symmetry,
        $M \mid N^*$ stably; say $N \simeq M^* \oplus X$ and $M \simeq
        N^* \oplus Y$. Eliminating $N$ and using the fact that $M
        \simeq M^{**}$, we obtain $M \simeq M \oplus X^* \oplus Y$. 
	
	By Theorem~\ref{thm:BBC}, there is a finitely generated
        lattice $L$ such that $M \mid L$ stably. Applying
        $\shom_{kG}(L,-)$ to the equation, we obtain $\shom_{kG}(L,M)
        \cong \shom_{kG}(L,M) \oplus \shom_{kG}(L,X^*)
        \oplus\shom_{kG}(L,Y)$. These are finitely generated
        $k$-modules and $k$ is noetherian, so we must have
        $\shom_{kG}(L,Y)=0$. But $Y \mid M$ stably, hence $Y \mid L$
        stably, so $\send_{kG}(Y)=0$; thus $Y \simeq 0$. By symmetry, $X
        \simeq 0$, yielding $N \simeq M^*$. 
	
	The equation $f = \ev ( \id_M \otimes f')$ shows that $\ev$ is split, so $k$ is stably a summand of $M \otimes M^*$, say $M \otimes M^* \simeq k \oplus Z$. But we now know that $M \otimes M^* \simeq M \otimes N \simeq k$, so we have $k \simeq k \oplus Z$. As before, applying $\shom_{kG}(k,-)$ shows that $Z \simeq 0$.
	
	For the final part, $L$ is finitely generated, so we know that
        the natural map $\theta_L \! : L\otimes
        L^*\rightarrow \End_k(L)$ is a stable isomorphism. But $L
        \simeq M \oplus U$ for some $U$, so we can write $L \otimes
        L^* \simeq (M \otimes M^*) \oplus (M \otimes U^* \oplus U
        \otimes M^* \oplus U \otimes U^*)$ and $\End_k(L) \simeq
        (\End_k(M)) \oplus (\Hom_k(U,M) \oplus \Hom_k(M,U)
        \oplus \End_k(U))$, both considered as a sum of two
        submodules. By definition, $\theta_L$ respects these decompositions and
        restricts to $\theta_M$ on $M \otimes M^*$. In other words, the direct sum of $\theta_M$ and another morphism is a stable isomorphism; it follows that $\theta_M$ is a stable isomorphism. 
	\end{proof}
	
Now for infinite groups of type $\Phi_k$.

\begin{thm}\label{thm:invertible1}
A $kG$-module $M$ is invertible if and only if 
$M\res GF$ is invertible for every finite subgroup $F$ of $G$.
Moreover, if $M$ is invertible, then $M^*$ is an inverse for $M$.
\end{thm}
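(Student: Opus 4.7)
The claim splits into an ``only if'' direction, an ``if'' direction, and the identification of the inverse; the plan is to prove the last two together by a direct evaluation-map argument.

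For the ``only if'' direction, suppose $M\otimes_k N\simeq k$ in $\smod(kG)$. Restriction along $F\leq G$ is exact and preserves projectives, so it induces a functor $\Res^G_F\!:\smod(kG)\to\smod(kF)$ (see Section~\ref{sec:functors}), and it visibly commutes with $\otimes_k$. Hence $M\!\downarrow_F\otimes_k N\!\downarrow_F\simeq k$ in $\smod(kF)$, so $M\!\downarrow_F$ is invertible.

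For the converse direction, and simultaneously for the identification of $M^*$ as an inverse, I would first replace $M$ by its Gorenstein approximation $\tilde M$ (Definition~\ref{def:g-approx}); this does not change the stable isomorphism class, so we may assume $M$ is Gorenstein projective, hence a lattice. Then $M^*=\Hom_k(M,k)$ is also a lattice (Lemma~\ref{lem:f2}), and there is a bona fide $kG$-linear evaluation map $\ev_M\!:M\otimes_k M^*\to k$. The goal is to show that $\ev_M$ is a stable isomorphism in $\smod(kG)$. Since restriction commutes with both $\otimes_k$ and the $k$-dual, $\ev_M\!\downarrow_F$ is canonically identified with the evaluation map $\ev_{M\downarrow_F}\!:M\!\downarrow_F\otimes_k(M\!\downarrow_F)^*\to k$. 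By hypothesis $M\!\downarrow_F$ is invertible in $\smod(kF)$, so the finite-group theorem immediately preceding this statement (applied in part (2) to $M\!\downarrow_F$) shows that $\ev_{M\downarrow_F}$ is a stable isomorphism for each finite $F\leq G$. Lemma~\ref{lem:iso} then promotes this fibrewise data to a global stable isomorphism, so $\ev_M$ is a stable isomorphism in $\smod(kG)$; that is, $M\otimes_k M^*\simeq k$, and $M$ is invertible with inverse $M^*$.

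The only technical point requiring care is confirming that the evaluation map commutes with restriction in $\smod$; this is automatic once $M$ is a lattice, since all the tensor and Hom constructions then reduce to the obvious operations on the underlying $k$-modules. The real work is done by Lemma~\ref{lem:iso}, which is precisely the bridge that exploits the defining property of type $\Phi_k$ to pass from finite subgroups back to $G$.
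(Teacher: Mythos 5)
Your proof is correct and follows essentially the same route as the paper: use the evaluation map $\ev\colon M\otimes M^*\to k$, invoke the preceding finite-group theorem to get a stable isomorphism on each finite subgroup, and then apply Lemma~\ref{lem:iso} to conclude globally. The only difference is that you spell out the (legitimate and indeed necessary) reduction to a Gorenstein projective representative, which the paper leaves implicit.
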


Note that if $k$ is $p$-local then the condition that $M\res GF$ be invertible for every finite
subgroup $F$ of $G$ is satisfied if and only if it is satisfied when $F$ runs through
a set of representatives of the conjugacy classes of finite elementary
abelian $p$-subgroups of $G$ (\cite{CMN}).

\begin{proof}
	Clearly, if $M$ is invertible, then so is $M\res GF$.
	For the converse, consider the map $\ev \! : M \otimes M^* \rightarrow k$. The previous theorem shows that for finite groups this is a stable isomorphism if and only if $M$ is invertible. Thus if $M$ is invertible on all finite subgroups then $\ev$ is a stable isomorphism on all finite subgroups, hence a stable isomorphism over $kG$, by Lemma~\ref{lem:iso}.
	\end{proof} 

A similar proof works for the next result.

\begin{prop}
	If $M$ is an invertible $kG$-module then $\theta_M \! : M \otimes M^* \rightarrow \End_k(M)$ is a stable isomorphism, as is the natural map $M \rightarrow M^{**}$.
	\end{prop}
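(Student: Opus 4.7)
The plan is to mimic the strategy of Theorem~\ref{thm:invertible1}, namely to reduce to the finite-subgroup case and apply the corresponding result for finite groups, then invoke Lemma~\ref{lem:iso}. Both $\theta_M$ and the biduality map $M\to M^{**}$ are defined naturally from $M$, so they commute with the restriction functors $\Res^G_F$ on $\smod$; this is the key formal fact that lets the reduction go through.

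First I would assume without loss of generality (via the Gorenstein approximation $\tilde M\to M$) that $M$ is Gorenstein projective, so that $\theta_M$ and the biduality map are represented by genuine $kG$-homomorphisms and tensor/Hom/dual all have their naive meaning at the level of modules. Next, for each finite subgroup $F\leq G$, Theorem~\ref{thm:invertible1} tells us that $M\res GF$ is invertible over $kF$. The previous theorem (for finite groups, parts (2) and (3), together with Theorem~\ref{thm:BBC} which gives $M\simeq M^{**}$) then says that over $kF$ both $\theta_{M\downarrow_F}\!:M\downarrow_F\otimes_k(M\downarrow_F)^*\to\End_k(M\downarrow_F)$ and $M\downarrow_F\to(M\downarrow_F)^{**}$ are stable isomorphisms.

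Because restriction commutes with $\otimes_k$, with $\Hom_k(-,k)$, and with the natural transformations $\theta_-$ and $\id\to(-)^{**}$, we have $\Res^G_F(\theta_M)=\theta_{M\res GF}$ and similarly for the biduality map. Thus the restrictions of $\theta_M$ and of $M\to M^{**}$ to every finite subgroup are stable isomorphisms. Finally, Lemma~\ref{lem:iso} upgrades this to a stable isomorphism over $kG$, completing the proof.

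The only real obstacle I anticipate is bookkeeping, namely making sure that the formation of $M^*$, $\End_k(M)$ and the maps $\theta_M$ and $M\to M^{**}$ are compatible with restriction in the stable category; but this is immediate from the naturality of these constructions and from the fact (discussed in Section~\ref{sec:functors}) that $\Res^G_F$ is a well-defined triangulated functor between the stable categories which respects the tensor-$\Hom$ structure on lattices. No genuinely new argument beyond this naturality check is required.
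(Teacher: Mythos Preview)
Your proposal is correct and is precisely the argument the paper has in mind: the paper's proof of this proposition consists solely of the remark ``A similar proof works for the next result,'' referring back to the proof of Theorem~\ref{thm:invertible1}, which is exactly the reduction-to-finite-subgroups-via-Lemma~\ref{lem:iso} strategy you have written out. Your additional care in replacing $M$ by its Gorenstein approximation and in noting the naturality of $\theta_M$ and the biduality map under restriction is appropriate and fills in the details the paper leaves implicit.
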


Here is another result from the theory of endotrivial modules for finite groups
which extends to invertible modules for groups of type $\Phi_k$.

\begin{cor}
Let $M$ be a $kG$-module. 
For any $n\in\Z$, if $M \otimes N \simeq k$ then $\Omega^nM \otimes \Omega^{-n}N \simeq k$.
\end{cor}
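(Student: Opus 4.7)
The plan is to reduce to the case where both $M$ and $N$ are Gorenstein projective and then show that tensoring with a lattice commutes with the shift $\Omega^{\pm 1}$ up to stable isomorphism.

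First, I would replace $M$ and $N$ by their Gorenstein approximations (Definition~\ref{def:g-approx}), so we may assume both are Gorenstein projective and, in particular, lattices; the hypothesis $M\otimes N \simeq k$ is unaffected.

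The key auxiliary statement to establish is: for any Gorenstein projective $X$ and any lattice $L$, one has $\Omega^{\pm 1}(X)\otimes_k L \simeq \Omega^{\pm 1}(X\otimes_k L)$ in $\smod(kG)$. To prove this, extract from a complete resolution of $X$ the short exact sequences $0 \to \Omega X \to P_0 \to X \to 0$ and $0 \to X \to P_{-1} \to \Omega^{-1}X \to 0$, with $P_0,P_{-1}$ projective and both kernel/cokernel terms again Gorenstein projective. Because $L$ is projective, hence flat, over $k$, both sequences remain exact after tensoring with $L$; and by Lemma~\ref{lem:f2}, $P_0\otimes_k L$ and $P_{-1}\otimes_k L$ are of finite projective dimension over $kG$, so they become zero in $\smod(kG)$. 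The two resulting triangles identify $\Omega^{\pm 1}(X)\otimes_k L$ with $\Omega^{\pm 1}(X\otimes_k L)$. Iterating yields the analogous identity for every $n\in\Z$.

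Finally, applying the statement twice, once on the left tensor factor with $L=\Omega^{-n}N$ and once on the right with $L=M$, gives
\[
\Omega^n M\otimes_k \Omega^{-n}N \;\simeq\; \Omega^n(M\otimes_k \Omega^{-n}N) \;\simeq\; \Omega^n\Omega^{-n}(M\otimes_k N) \;\simeq\; M\otimes_k N \;\simeq\; k.
\]
The main obstacle is the auxiliary step, especially justifying the formula for negative $n$; the cleanest route is to interpret $\Omega^{\pm 1}$ via the complete resolution inside $\GPmp(kG)$ and transport the conclusion along the category equivalence of Theorem~\ref{thm:catequiv}.
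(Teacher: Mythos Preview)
Your argument is correct. Once $M$ and $N$ are replaced by Gorenstein projectives, they are lattices, and your auxiliary claim $\Omega^{\pm 1}(X)\otimes_k L \simeq \Omega^{\pm 1}(X\otimes_k L)$ for $X$ Gorenstein projective and $L$ a lattice follows exactly as you say: tensor the two short exact sequences extracted from a complete resolution of $X$ by $L$ (exactness is preserved since $L$ is $k$-projective), observe that $P_i\otimes_k L$ has finite projective dimension by Lemma~\ref{lem:f2}, and read off the resulting triangles in $\smod(kG)$. Iteration is straightforward since $\Omega^{\pm 1}X$ is again Gorenstein projective. The final chain of isomorphisms is then immediate.

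The paper, however, takes a different and much shorter route: it simply notes that the assertion is known for finite groups (citing \cite[Section 2]{CMN}) and then invokes the type $\Phi_k$ philosophy of Lemma~\ref{lem:iso}/Theorem~\ref{thm:invertible1} to pass from all finite subgroups to $G$. Your approach is self-contained and does not appeal to the finite-group literature; in effect you are reproving the finite case along the way, since your tensor--$\Omega$ commutation argument works uniformly for any group of type $\Phi_k$. The paper's approach is terser but outsources the work; yours makes explicit the underlying reason the result holds, namely that $-\otimes_k L$ is a triangulated functor on $\smod(kG)$ for $L$ a lattice.
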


\begin{proof}
The assertion holds for all finite subgroups of $G$ (see
\cite[Section 2]{CMN}) and therefore for
$G$ too.
\end{proof}

\begin{lemma}\label{lem:tg-op}
Let $H$ be a subgroup of $G$ and let $N$ be a normal subgroup such that any element of finite order has order a unit in $k$ and the quotient group $G/N$ is of type
$\Phi_k$. Restriction 
and inflation 
induce group homomorphisms
$$\Res^G_H \! : T(G)\to T(H)\qbox{and}
\Inf^{G/N}_G \! : T(G/N)\to T(G),$$
which commute with $\Omega$. More generally, there is a pullback map
$h^* \! : T(K) \rightarrow T(J)$ for any homomorphism $h \! : J
\rightarrow K$ such that any element of finite order in the kernel has
order a unit in $k$. 
There is also a base change map $f_* \! : T_k(G) \rightarrow T_{\ell}(G)$ for any $f \! : k \rightarrow \ell$ for suitable $\ell$.
\end{lemma}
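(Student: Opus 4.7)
The plan is to reduce the lemma to the triangulated-functor machinery already developed in Section~\ref{sec:functors}. First I would observe that each of the four functors in question---restriction $\Res^G_H$, inflation $\Inf^{G/N}_G$, the pullback $h^*$ along a homomorphism $h\colon J\to K$ whose kernel has the stated torsion condition, and the base change $f_*$ along $f\colon k\to\ell$---induces a triangulated functor on the corresponding stable categories. Restriction is exact and takes projectives to projectives, so this is immediate. For inflation and the more general pullback $h^*$, the discussion just after Lemma~\ref{lem:f1} shows that the hypothesis on elements of finite order in the kernel is exactly what forces pullbacks of projectives to have finite projective dimension, so these functors descend to triangulated functors on $\smod$. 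Base change $f_*$ is covered directly by Lemma~\ref{lem:f1}.

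Next I would check that each such functor $F$ is symmetric monoidal with respect to the relevant tensor product and sends the trivial module to the trivial module. For a pullback $h^*$ this is immediate: $h^*$ acts as the identity on underlying $k$-modules, respects $k$-tensor products, and sends $k$ to $k$. For $f_* = \ell \otimes_k -$ one uses the standard isomorphism $\ell \otimes_k (M\otimes_k N) \cong f_*M \otimes_\ell f_*N$ together with $\ell \otimes_k k \cong \ell$. Consequently, if $M \otimes_k N \simeq k$ in $\smod(kG)$, then applying $F$ yields $F(M)\otimes F(N) \simeq F(k) \simeq k$ (resp.\ $\ell$) in the target stable category, so $F$ preserves invertibility and the tensor product structure. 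It descends to a well-defined group homomorphism on Picard groups, and commutativity with $\Omega$ is a formal consequence of $F$ being triangulated, since $\Omega$ is just the inverse shift.

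Finally, the statements for $\Res^G_H$ and $\Inf^{G/N}_G$ fall out as the special cases of $h^*$ corresponding to the inclusion $H \hookrightarrow G$ and the projection $G \twoheadrightarrow G/N$ respectively. The identification $M^* = \Hom_k(M,k)$ as the inverse, already established in Theorem~\ref{thm:invertible1}, transports along $F$ since $F$ commutes with $\Hom_k(-,k)$ on lattices (again because $F$ only modifies the group action). I do not expect any serious obstacle: every substantive claim has already been verified in Section~\ref{sec:functors}, and the lemma amounts to transferring those functorial properties to the level of Picard groups.
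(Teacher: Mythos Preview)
Your approach is essentially the paper's: reduce to the fact that the functors of Section~\ref{sec:functors} are triangulated and monoidal, hence induce homomorphisms of Picard groups. One point you treat as immediate the paper explicitly flags as ``not quite obvious'': the compatibility $F(M)\otimes F(N)\simeq F(M\otimes N)$ in $\smod$. Recall from Section~\ref{sec:functors} that both $-\otimes_k-$ and functors like $\Inf$ or $f_*$ are defined on $\smod$ by first replacing objects by their Gorenstein approximations, so the familiar module-level identity $h^*(M\otimes_k N)\cong h^*M\otimes_k h^*N$ does not automatically transfer. The fix, which the paper spells out, is exactly what you gesture at near the end: pass to Gorenstein projective (hence lattice) representatives, on which both the tensor product and the functor in question agree with their ordinary module-theoretic versions, and then the commutation is clear. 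With that caveat made explicit your argument is complete and matches the paper's.
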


\begin{proof}
	This follows immediately from the fact that the functors
        commute with tensor product in $\smod$. However, given the way
        we have defined functors on $\smod$ in
        Section~\ref{sec:functors}, the latter is not quite
        obvious. In the case of $\Inf$, for example, we need to check
        that $\Inf^{G/N}_G M \otimes \Inf^{G/N}_G N \simeq
        \Inf^{G/N}_G (M \otimes N)$.
        By taking Gorenstein approximations
        (Definition~\ref{def:g-approx}), we can assume that $M$ and $N$ are
        Gorenstein projective, hence lattices; the same will be true
        of their inflations. By the discussion at the beginning of
        Section~\ref{sec:functors},  both $\Inf^{G/N}_G(-)$ and
        $-\otimes_k-$ applied to these modules have their usual
        meaning, and therefore commute.
	\end{proof}
	
\begin{lemma}
	\label{lem:inf}
	If $G$ and $N$ are as above and the quotient map $\pi \! : G \rightarrow G/N$ is split then $\Inf^{G/N}_G$ is split.
	\end{lemma}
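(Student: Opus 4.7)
The plan is to exhibit an explicit retraction of $\Inf^{G/N}_G$ using the splitting of $\pi$, via the pullback construction from Lemma~\ref{lem:tg-op}.

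Let $s \! : G/N \to G$ be a group-theoretic splitting of $\pi$, so $\pi\circ s = \id_{G/N}$. First I would observe that $s$ is automatically injective (being a section of a surjection), so its kernel is trivial and the hypothesis of Lemma~\ref{lem:tg-op} on elements of finite order in the kernel is vacuously satisfied. Since $G/N$ is of type $\Phi_k$ by assumption, Lemma~\ref{lem:tg-op} provides a pullback homomorphism $s^* \! : T(G) \to T(G/N)$.

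Next I would check that $s^*\circ \Inf^{G/N}_G = \id_{T(G/N)}$. Given $[M] \in T(G/N)$, we may assume $M$ is Gorenstein projective, hence a lattice, by Theorem~\ref{thm:catequiv} and the discussion in Section~\ref{sec:functors}. Then $\Inf^{G/N}_G(M)$ is literally $M$ equipped with the $G$-action obtained by pulling back along $\pi$, and $s^*$ then equips it with the $G/N$-action obtained by further pulling back along $s$. Because $\pi\circ s = \id_{G/N}$, this composite $G/N$-action coincides with the original one, so $s^*(\Inf^{G/N}_G(M)) = M$ on the nose as $k(G/N)$-modules, not merely up to stable isomorphism. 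Hence $s^* \circ \Inf^{G/N}_G$ is the identity on $T(G/N)$, so $\Inf^{G/N}_G$ is a split monomorphism.

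The only non-routine point is making sure that the composition of pullbacks agrees with the pullback of the composition at the level of the maps $T(-)$, rather than merely at the level of the underlying modules. Here there is no obstacle, because as noted in the paragraph on functors after Lemma~\ref{lem:f2}, once we pass to lattices the functors $\Inf^{G/N}_G$ and $s^*$ are given by their classical module-theoretic formulas, and these compose strictly; thus the induced maps on $T$ compose in the expected way.
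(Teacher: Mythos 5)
Your argument is correct and is essentially the paper's own proof: pick a splitting $\sigma$ of $\pi$, note that $\Inf^{G/N}_G = \pi^*$, and then $\sigma^*\pi^* = (\pi\sigma)^* = \id$. You have simply spelled out in more detail why $\sigma^*$ is defined (trivial kernel) and why the pullbacks compose as expected on $T(-)$.
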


\begin{proof}
	Let $\sigma$ be a splitting homomorphism, so $\pi \sigma = \id$. Then $ \sigma^* \Inf^{G/N}_G = \sigma^* \pi^* = \id$.
	\end{proof}

Another source of invertible modules comes from the observation that any $kG$-module that is a free $k$-module of rank 1 is invertible. 
 
\begin{prop}
	\label{pr:lift}
	Suppose that $G$ is finite and $k$ is a complete discrete valuation ring with residue class field $\overline{k}$. Then there is a short exact sequence $$0 \rightarrow \Hom(G, \tor_p(k^{\times})) \rightarrow T_k(G) \rightarrow T_{\overline{k} }(G) \rightarrow 0,$$ which is split. Here $\tor_p$ denotes the $p$-torsion subgroup and we are identifying $\Hom(G, k^{\times})$ with the rank 1 lattices.
	\end{prop}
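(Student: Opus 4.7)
The plan is to identify each map explicitly, verify exactness term by term, and then construct a splitting. Write $\pi$ for a uniformizer of $k$ and $f \! : k \to \overline k$ for the reduction, which induces the right-hand base change map $f_*$ of Lemma~\ref{lem:tg-op}. The left-hand map sends a character $\chi \! : G \to \tor_p(k^\times) \subseteq k^\times$ to the stable class of the rank $1$ lattice $k_\chi$; this is a group homomorphism since $k_\chi \otimes_k k_\psi \cong k_{\chi\psi}$. The composite is zero: because $\overline k$ has characteristic $p$, the group $\overline k^\times$ is $p$-torsion free, so every such $\chi$ reduces to the trivial character and $f_* k_\chi \cong \overline k$. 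Injectivity of the left-hand map follows from Lemma~\ref{lem:stab-iso} together with the Crawley--J{\o}nsson--Warfield argument used in Theorem~\ref{thm:BBC}: a stable isomorphism $k_\chi \simeq k$ yields $k_\chi \oplus P \cong k \oplus Q$ with $P, Q$ projective, and comparing non-projective indecomposable summands forces $k_\chi \cong k$, hence $\chi = 1$.

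For exactness in the middle, let $M \in T_k(G)$ with $f_* M \simeq \overline k$. By Theorem~\ref{thm:BBC} we may assume $M$ is a finitely generated lattice, so $\overline M := \overline k \otimes_k M$ is a finitely generated $\overline kG$-lattice with $\overline M \simeq \overline k$; Krull--Schmidt gives $\overline M \cong \overline k \oplus \overline P$ with $\overline P$ projective. Since $\End_{kG}(M)$ is a finitely generated $\pi$-adically complete $k$-algebra, the idempotent of $\End_{\overline kG}(\overline M)$ projecting onto the trivial summand lifts to one in $\End_{kG}(M)$, yielding $M \cong L \oplus P$ with $L$ a rank $1$ $kG$-lattice and $P$ projective. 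Writing $L = k_\chi$, the reduction $\overline L \cong \overline k$ forces $\chi(G) \subseteq 1 + \pi k$; as $1 + \pi k$ is pro-$p$, its finite subgroup $\chi(G)$ lies in its $p$-torsion, which is precisely $\tor_p(k^\times)$.

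The main obstacle is surjectivity of $f_*$ together with the construction of a multiplicative section. Surjectivity rests on the lifting theorem for endotrivial modules over a complete discrete valuation ring: any finitely generated endotrivial $\overline kG$-module $\overline M$ admits a lift to a finitely generated endotrivial $kG$-lattice. One argues by running the previous paragraph in reverse: pick a $kG$-lattice $\widetilde M$ reducing to $\overline M$ (such a lift exists after stabilising by a projective, since the cohomological obstruction is carried by the stably trivial module $\End_{\overline k}(\overline M)$), then lift the idempotent of $\End_{\overline kG}(\End_{\overline k}(\overline M))$ cutting out the trivial summand to extract an invertible summand $M$ of $\widetilde M$. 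By the middle-exactness step, any two such lifts of a given $\overline M$ differ by a character in $\Hom(G, \tor_p(k^\times))$, so the fibres of $f_*$ are torsors under the kernel. A group-theoretic section $s \! : T_{\overline k}(G) \to T_k(G)$ is then constructed by choosing compatible lifts on a set of generators of $T_{\overline k}(G)$ (for example those arising from the Carlson--Th\'evenaz description) and exploiting the torsor structure to arrange multiplicativity on tensor products.
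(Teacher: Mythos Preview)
Your treatment of injectivity and of exactness in the middle is essentially the paper's, and is fine. The two serious problems are surjectivity and, especially, the splitting.

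For surjectivity, the claim that a lattice lift $\widetilde M$ of $\overline M$ exists ``after stabilising by a projective, since the cohomological obstruction is carried by the stably trivial module $\End_{\overline k}(\overline M)$'' is not justified. The obstruction to lifting lies in ordinary (not complete) cohomology with coefficients in $\End_{\overline k}(\overline M)$, and adding a projective summand to $\overline M$ does not kill this: it only enlarges the coefficient module by a further projective summand, which need not affect the ordinary cohomology. Moreover, even granting a lattice lift $\widetilde M$, lifting an idempotent of $\End_{\overline kG}(\End_{\overline k}(\overline M))$ decomposes $\End_k(\widetilde M)$, not $\widetilde M$; at best this shows $\widetilde M$ is endotrivial, it does not ``extract an invertible summand''. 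The paper does not attempt a direct lifting argument here: in mixed characteristic it quotes the lifting theorem of Lassueur--Malle--Schulte, and in equal characteristic $p$ it uses Cohen's structure theorem to embed $\overline k$ as a subring of $k$, so that modules lift trivially by base change.

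The splitting argument is the more serious gap. Observing that the fibres of $f_*$ are torsors under the kernel is merely a restatement of exactness; every short exact sequence of abelian groups has this property, yet not every such sequence splits. ``Choosing compatible lifts on a set of generators'' and then ``arranging multiplicativity on tensor products'' begs the question: one must check that the relations among the chosen generators are respected by the lifts, and that is exactly the splitting obstruction. The paper's construction is quite different and involves no choices. In equal characteristic $\tor_p(k^\times)=1$ and there is nothing to do. In mixed characteristic, Cohen's structure theorem produces a coefficient ring $\ell \subseteq k$ (complete local with maximal ideal $p\ell$ and residue field $\overline k$) containing no non-trivial $p$-th roots of unity. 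Applying the already-established exact sequence over $\ell$ gives $T_\ell(G) \cong T_{\overline\ell}(G) = T_{\overline k}(G)$, and the base-change map $T_\ell(G) \to T_k(G)$ of Lemma~\ref{lem:tg-op} is then the desired section.
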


\begin{proof} By Theorem~\ref{thm:BBC}, we may take the modules to be finite rank lattices. Surjectivity at $T_{\overline{k}} (G)$ is proved in \cite[1.3]{LMS} in the case when $k$ has characteristic 0. When it has characteristic $p$ then $\overline{k}$ lifts to a subring of $k$ (Cohen's Structure Theorem, see e.g.\ \cite[28.3]{mat}), so modules lift too.
	
	If $M$ is a $kG$-lattice such that $\overline{k} \otimes_kM \cong \overline{k} \oplus \proj$ as $\overline{k}G$-modules, then the projective summand lifts to a projective summand of $M$. The complement is of rank 1 and thus corresponds to an element of $\Hom(G, \tor_p(k^{\times}))$; this proves exactness in the middle. For injectivity on the left, note that if two rank 1 lattices are isomorphic modulo projectives then they are isomorphic, by the Krull-Schmidt theorem, unless $p$ does not divide $|G|$, in which case the proposition is trivial.
	
	For the splitting, first note that if $k$ has characteristic $p$ then $\tor_p(k^{\times})=1$ and there is nothing to prove, so we may assume that $k$ has characteristic 0. Let $\ell \leq k$ be a coefficient ring (Cohen's Structure Theorem again, \cite[29.3]{mat}), so $\ell$ is a complete local ring with maximal ideal generated by $p$, and also $\overline{\ell} \cong \overline{k}$. Thus $\ell$ contains no $p$th roots of unity and the first part of the proposition tells us that $T_{\overline{k} }(G) \cong T_{\overline{\ell} }(G) \cong T_{\ell }(G)$. The base change map $T_{\ell }(G) \rightarrow T_{k}(G)$ of Lemma~\ref{lem:tg-op} now completes the splitting. 
	\end{proof}

\section{Groups Acting on Trees}\label{sec:pi1graph}

In this section, we describe $T_k(G)$ for groups $G$ of type $\Phi_k$ which
act on  trees, in other words for the fundamental group of a graph of groups. We follow \cite{DD} for the notation and
conventions. As special cases, we will obtain $T_k(G)$ for amalgamated
free products and HNN extensions of groups of type $\Phi_k$.

Suppose that a group
$G$ acts on a tree $T$ with vertex set $VT$ and edge set $ET$. If all the vertex
stabilisers are of type $\Phi_k$ (hence the edge stabilisers too) and there exists a common bound on
their finistic dimensions, then $G$ is of type $\Phi_k$ by Proposition~\ref{prop:g-cw-cx}.
 In particular, this applies when the stabilisers are finite.

We continue to impose our assumptions on $k$ from the beginning of
Section~\ref{sec:phi} , and all groups will be of type $\Phi_k$ except
where indicated (and of course, this does not apply to groups such as
$T_k(G)$).
We will omit the subscript $k$ on $T$ and $\Phi$ and write $\saut_G(k)$ for $\saut_{kG}(k)$.

The edges of $T$ are oriented, so each edge $e\in ET$ has an initial vertex $\iota(e)$ and a terminal
vertex $\tau(e)$. We pick a fundamental $G$-transversal
$Y$ of $T$ and a maximal subtree $Y_0$ of $Y$. (So $Y$ is a fundamental
domain for the action of $G$ on $T$ and $VY=VY_0$; an edge in
$Y_0$ has both vertices in $Y_0$, but an edge in $Y\smallsetminus Y_0$
has only its initial vertex in $Y_0$.)
For each vertex $v\in VT$, let $\bar v$ be the unique vertex of $Y_0$ in the same $G$-orbit
as $v$ and choose $t_v\in G$ such that $t_v\bar v=v$, with $t_v=1$ if
$\bar v=v$.

 Let $G_v$ and $G_e$ denote the
 stabilisers in $G$ of $v\in VT$ and $e\in ET$, so
$G_e=G_{i(e)}\cap G_{t(e)}$.
For each edge $v \in VT$  there is an isomorphism
$f_v:G_v\to G_{\bar v}$, given by $g\mapsto t_v^{-1}gt_v$.

Suppose that for each $v\in VY$, we are given a Gorenstein projective
$kG_v$-module $M_v$, and for each edge $e\in EY$ a stable
$kG_e$-isomorphism (which we can assume to be a genuine morphism, by
Lemma~\ref{lem:nat-onto})
$$\varphi_e \! : M_{\iota (e)}\res{G_{\iota(e)}}{G_e}\longrightarrow\big(t_{\tau (e)}\otimes M_{\ovl{\tau(e)}}\big)
\res{G_{\tau(e)}}{G_e},$$
where $t_{\tau(e)}\otimes M_{\ovl{\tau(e)}}$ is considered as a
$kG_{\tau(e)}$-module in the same way as it would be in the restriction of $kG \otimes _{kG_{\ovl{\tau(e)}}} M_{kG_{\ovl{\tau(e)}}}$, i.e.\  
$g(t_{\tau(e)}\otimes m)=t_{\tau(e)}\otimes (t_{\tau(e)}^{-1} g t_{\tau(e)})m$ for $g \in G_{\tau(e)}$. Thus $t_{\tau(e)}\otimes M_{\ovl{\tau(e)}} \simeq f^*_{\tau(e)} M_{\ovl{\tau(e)}}$. We refer to this data
as $(\ul M,\ul{\varphi})$.

Now, for $v\in VT$, set $M_v=t_v\otimes M_{\bar v}$, a $kG_v$-module, and for $e\in ET$,
$M_e=M_{\iota(e)}\res{G_{\iota(e)}}{G_e}$. Define
$$\varphi_e:M_e\longrightarrow M_{\tau(e)}\res{G_{\tau(e)}}{G_e}\qbox{by}
\varphi_e(t_{\iota(e)}\otimes m)=t_{\tau(e)}t_{\ovl{\tau(e)}}^{-1}\varphi_e(m).$$
Set
$$C_0=\bigoplus_{v\in VT}M_v\qbox{and}C_1=\bigoplus_{e\in ET}M_e,
$$
equipped with the canonical $G$-action, so that
$$C_0\cong\bigoplus_{v\in VY}M_v\ind{G_v}G\qbox{and}
C_1\cong\bigoplus_{e\in EY}M_e\ind{G_e}G.$$

Define a $kG$-homomorphism
$d:C_1\to C_0$ as follows:
for $e\in ET$ and $m\in M_e=M_{i(e)}\res{G_{i(e)}}{G_e}$, let
$dm$ be the difference of $m \in M_{\iota(e)}$ and $\varphi_e(m)$. Let $C(\ul M,\ul{\varphi})$ be the two-term complex
$\xymatrix{C_1\ar[r]^d&C_0}$ and define $\ovl{C}(\ul M,\ul{\varphi})$
to be its cone.

Note that this description of $d$ requires us to work with modules and
genuine homomorphisms. A construction entirely in the stable category
would proceed by defining maps $M_e\to C_0\res G{G_e}$ for $e\in EY$
and then inducing.

This construction gives canonical maps
$\alpha_v:M_v\longrightarrow\ovl{C}(\ul M,\ul{\varphi})\res{G}{G_v}$.

\begin{lemma}\label{lem1}
The following hold.
\begin{enumerate}
\item For each $v\in VT$, the map $\alpha_v$ is a stable $kG_v$-isomorphism.
\item For each $e\in ET$, we have
$\big(\alpha_{\tau(e)}\res{G_{\tau(e)}}{G_e}\big)^{-1}
\big(\alpha_{\iota(e)}\res{G_{\iota(e)}}{G_e}\big)=\varphi_e$ in $\smod(kG_e)$.
\end{enumerate}  
\end{lemma}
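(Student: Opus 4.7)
Part (2) follows by unwinding the definition of $d$. For $m\in M_e=M_{\iota(e)}\res{G_{\iota(e)}}{G_e}$, we have $d(t_{\iota(e)}\otimes m)=(t_{\iota(e)}\otimes m)-\varphi_e(t_{\iota(e)}\otimes m)$ in $C_0$, with the two terms sitting in the $M_{\iota(e)}$- and $M_{\tau(e)}$-summands respectively. Since the distinguished triangle $C_1\to C_0\to\ovl{C}$ forces the image of $d$ to vanish in $\smod(kG)$, we obtain $\alpha_{\iota(e)}(t_{\iota(e)}\otimes m)=\alpha_{\tau(e)}\bigl(\varphi_e(t_{\iota(e)}\otimes m)\bigr)$ in $\smod(kG_e)$; granted part (1), this rearranges to the identity in (2).

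For part (1), the plan is to invoke Lemma~\ref{lem:iso} and check only that $\alpha_v\res{G_v}{F}$ is a stable isomorphism for each finite subgroup $F\leq G_v$. A Mackey formula decomposition identifies $C_*\res{G}{F}$ with the analogous $C(\ul M,\ul\varphi)$-construction for the restricted $F$-action on $T$, using the data $\bigl(M_w\res{G_w}{F_w},\varphi_e\res{G_e}{F_e}\bigr)$ with $F_w=F\cap G_w$. This reduces the problem to the case of a finite group $F$ acting on $T$ and fixing $v$.

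In this finite case, filter $T$ by $F$-invariant finite subtrees $T_0=\{v\}\subset T_1\subset T_2\subset\cdots$ with $T=\bigcup_i T_i$, where each $T_i$ is obtained from $T_{i-1}$ by adjoining a single $F$-orbit of pendant edges together with their new endpoints. Let $(C_*^{(i)},\ovl{C}^{(i)},\alpha_v^{(i)})$ denote the corresponding construction on $T_i$; the base case is trivial with $\ovl{C}^{(0)}=M_v$ and $\alpha_v^{(0)}=\id$. In the inductive step, the quotient complex $C_*^{(i)}/C_*^{(i-1)}$ is concentrated on a single $F$-orbit, and its differential is the induction of $-\varphi_e$ (noting that the pendant condition forces $F_{\tau(e)}=F_e$, so source and target are induced from the same subgroup); since $\varphi_e$ is a stable isomorphism between Gorenstein projective modules, this quotient has stably trivial cone. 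The triangle $\ovl{C}^{(i-1)}\to\ovl{C}^{(i)}\to\ovl{C_*^{(i)}/C_*^{(i-1)}}$ then yields a stable isomorphism $\ovl{C}^{(i-1)}\simeq\ovl{C}^{(i)}$, compatibly with the $\alpha_v^{(*)}$.

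Finally, each inclusion $C_*^{(i-1)}\hookrightarrow C_*^{(i)}$ is split degreewise as $kF$-modules -- the added pieces being $F$-orbit direct summands of $C_0$ and $C_1$ -- so the cone of $\alpha_v\res{G_v}{F}$ in $\smod(kF)$ is identified with the coproduct of the stably trivial cones $\ovl{C_*^{(i)}/C_*^{(i-1)}}$, and by the existence of coproducts in $\smod(kF)$ established earlier this coproduct is itself stably trivial. The main obstacle I anticipate is precisely this final limiting step -- rigorously identifying the cone of $\alpha_v\res{G_v}{F}$ with the claimed coproduct and confirming that a coproduct of stably trivial objects remains stably trivial -- both of which should be routine but require care.
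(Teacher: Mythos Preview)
Your argument for part~(2) is essentially the paper's.

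For part~(1) the paper takes a more direct and elementary route. It observes that $\alpha_w$ is a stable isomorphism if and only if the composite $\bar d\colon C_1 \xrightarrow{d} C_0 \to C_0/M_w$ is one, and then writes down an explicit stable inverse $s$ to $\bar d$ over $kG_w$, defined inductively on the distance of each vertex summand $M_u$ from $w$ via the unique geodesic in $T$. No reduction to finite subgroups and no colimit argument is needed. Your reduction to finite $F$ via Lemma~\ref{lem:iso} is valid but unnecessary: $G_v$ already fixes $v$, so your filtration by balls around $v$ is already $G_v$-invariant and the whole argument could be run directly over $G_v$.

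The genuine weak point is exactly where you anticipate it. The claim that the cone of $\alpha_v\res{G_v}{F}$ is ``identified with the coproduct'' $\bigoplus_i \ovl{C_*^{(i)}/C_*^{(i-1)}}$ does not follow from degreewise splitness alone. Splitness gives module decompositions $C_1 \cong \bigoplus_i Q_1^{(i)}$ and $C_0/M_v \cong \bigoplus_i Q_0^{(i)}$, but $\bar d$ is only \emph{lower triangular} with respect to these, since an edge added at step $i$ has one endpoint already in $T_{i-1}$; so there is no direct identification of the cone of $\bar d$ with the coproduct of the diagonal cones, and for infinite sums a lower-triangular map with invertible diagonal blocks need not itself be invertible. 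The correct replacement for ``coproduct'' is ``sequential homotopy colimit'': the degreewise-split short exact sequence
\[
0 \longrightarrow \bigoplus_i C_*^{(i)} \xrightarrow{\ 1-\mathrm{shift}\ } \bigoplus_i C_*^{(i)} \longrightarrow C_* \longrightarrow 0
\]
of two-term complexes yields a Milnor triangle exhibiting $\ovl C$ as $\operatorname{hocolim}_i \ovl C^{(i)}$ in $\smod(kF)$, and since each transition map $\ovl C^{(i-1)} \to \ovl C^{(i)}$ is a stable isomorphism this homotopy colimit is $\ovl C^{(0)} = M_v$, compatibly with $\alpha_v$. (Your other worry, that a coproduct of zero objects is zero, is of course fine.) With this correction your approach does go through, but the paper's explicit inverse sidesteps the issue entirely.
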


\begin{proof}
For (1) it suffices to show that the composite map
$\bar d:C_1\stackrel{d}{\longrightarrow}C_0\longrightarrow C_0/M_w$
is a $kG_w$-isomorphism for all $w\in VY$. We shall construct an
inverse $s$.
Write $(C_0/M_w)\res G{G_w}=\dst\bigoplus_{i=1}^{\infty}C_0^i$, 
where $C_0^i$ is the sum of all the $M_v$ for $v$ at a distance $i$ from $w$.
We shall define $s$ inductively on the $C_0^i$ as follows.
Suppose that we have already defined $s$ on
$\dst\bigoplus_{i=1}^jC_0^i$ for some $j\geq 0$. Given a vertex $v$ at
a distance $j+1$ from $w$, there is a unique edge $e$ with one vertex 
$v$ and the other at a distance $j$ from $w$.
Let $m\in M_v$: if $v=\iota(e)$ then define
$s(m)$ to be the sum of $m \in M_e = M_v\res{G_v}{G_e}$ and $s\varphi_e(m)$; if $v=\tau(e)$ define
$s(m)$ to be the sum of $-\varphi_e^{-1}(m) \in M_e=M_{\iota(e)}\res{G_{\iota(e)}}{G_e}$ and $s(\varphi_e^{-1}(m))$. Here $\varphi_e^{-1}$ denotes a genuine homomorphism that is a stable inverse to $\varphi_e$.

It is straightforward to check that $s$ is a $kG_w$-homomorphism
that is a stable inverse to $\bar d$.

For (2), note that, by construction, we have
$\alpha_{i(e)}(m)-\alpha_{t(e)}(\varphi_e(m))\in\im(d)$ for all
$m\in M_v$ and all $v\in VT$. The equality follows.
\end{proof}

Suppose that each $M_v$ is stably trivial.
Let $\tilde k$ be the Gorenstein projective approximation of the
trivial $kG$-module $k$ and choose stable isomorphisms
$\theta_v:\tilde k\res G{G_v}\longrightarrow M_v$ for all
$v\in VY$. We extend the definition of these maps to all of $VT$ by setting
$\theta_v=t_v\theta_{\bar v}t_v^{-1}$ for $v\in VT$.

\begin{lemma}\label{lem2}
The assignment 
$\varphi_e\mapsto\varphi_e'=\theta_{\tau(e)}^{-1}\varphi_e\theta_{\iota(e)}$
identifies the stable classes of isomophisms
$M_{\iota(e)}\res{G_{\iota(e)}}{G_e}\to M_{\tau(e)}\res{G_{\tau(e)}}{G_e}$ with the
elements of $\wh{\Aut}_{G_e}(k)$.
\end{lemma}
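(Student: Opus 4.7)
The plan is to exhibit a two-sided inverse of the assignment at the level of stable isomorphism classes. First observe that for any edge $e\in EY$ and $v\in\{\iota(e),\tau(e)\}$, the restriction $\theta_v\res{G_v}{G_e}$ is a stable $kG_e$-isomorphism $\tilde k\res{G}{G_e}\to M_v\res{G_v}{G_e}$, since stable isomorphisms are preserved by restriction and $G_e$, being a subgroup of a group of type $\Phi_k$, is itself of type $\Phi_k$. Because $\varphi_e$ is a stable $kG_e$-isomorphism by hypothesis, the composite
$$\varphi_e'=\theta_{\tau(e)}^{-1}\,\varphi_e\,\theta_{\iota(e)},$$
interpreted in $\smod(kG_e)$, is a stable self-isomorphism of $\tilde k\res{G}{G_e}$. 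Via $\tilde k\simeq k$ and $\saut_{G_e}(\tilde k)=\saut_{G_e}(k)$, this identifies $\varphi_e'$ with an element of $\saut_{G_e}(k)$, showing that the assignment is well defined.

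For the inverse, given $\psi\in\saut_{G_e}(k)$, I would define $\varphi_e=\theta_{\tau(e)}\,\psi\,\theta_{\iota(e)}^{-1}$ in $\smod(kG_e)$. This is a composition of stable $kG_e$-isomorphisms, hence a stable $kG_e$-isomorphism $M_{\iota(e)}\res{G_{\iota(e)}}{G_e}\to M_{\tau(e)}\res{G_{\tau(e)}}{G_e}$; since $M_{\iota(e)}\res{G_{\iota(e)}}{G_e}$ is Gorenstein projective, Lemma~\ref{lem:nat-onto} lets us represent $\varphi_e$ by a genuine $kG_e$-homomorphism, as required in the data $(\ul M,\ul\varphi)$. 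Checking that these two assignments are mutually inverse on stable classes is then purely formal: the $\theta$'s cancel against their stable inverses in $\smod(kG_e)$, and injectivity follows because $\theta_{\iota(e)}$ and $\theta_{\tau(e)}$ are stable isomorphisms, so composing $\varphi_e'=\psi_e'$ on the appropriate sides recovers the equality $\varphi_e=\psi_e$ in $\smod(kG_e)$.

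The only subtle point is that the $\theta_v$ exist only as stable isomorphisms, so the inverses $\theta_v^{-1}$ and the composites defining $\varphi_e'$ and its inverse live naturally in $\smod(kG_e)$ rather than in $\Mod(kG_e)$. Since the lemma parametrises only stable classes of isomorphisms, this causes no difficulty, and Lemma~\ref{lem:nat-onto} lifts these stable morphisms to genuine homomorphisms whenever needed. Beyond this bookkeeping, the content of the lemma is the formal observation that conjugation by a fixed pair of stable isomorphisms sets up a bijection between the stable isomorphism classes of maps $M_{\iota(e)}\res{G_{\iota(e)}}{G_e}\to M_{\tau(e)}\res{G_{\tau(e)}}{G_e}$ and the stable automorphisms of $\tilde k\res{G}{G_e}$.
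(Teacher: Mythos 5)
Your proposal is correct and is just a careful unwinding of the approach the paper has in mind: the paper's proof consists of the single line ``This is immediate from the definition of the maps $\theta_v$ and $\varphi_e$,'' and your argument — conjugation by the stable isomorphisms $\theta_{\iota(e)},\theta_{\tau(e)}$ gives a bijection with $\wh{\Aut}_{G_e}(k)\cong\wh{\Aut}_{G_e}(\tilde k)$, with Lemma~\ref{lem:nat-onto} supplying genuine homomorphisms when needed — is exactly what ``immediate'' means here.
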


\begin{proof}
This is immediate from the definition of the maps $\theta_v$ and 
$\varphi_e$.
\end{proof}

As a direct consequence of Lemmas~\ref{lem1} and~\ref{lem2}, we obtain
the following.

\begin{cor}\label{cor1}
For every edge $e\in ET$, 
$\varphi_e'=\theta_{t(e)}^{-1}\alpha_{t(e)}^{-1}\alpha_{i(e)}\theta_{i(e)}$ in $\smod(kG_e)$.
\end{cor}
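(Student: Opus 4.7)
The plan is to obtain the corollary by direct substitution, since the two ingredients are already laid out in the preceding lemmas. First I would recall that, by Lemma~\ref{lem2}, the element $\varphi_e' \in \saut_{G_e}(k)$ assigned to $\varphi_e$ is by definition the composite $\theta_{\tau(e)}^{-1}\varphi_e\theta_{\iota(e)}$, taken in $\smod(kG_e)$. Next I would invoke Lemma~\ref{lem1}(2), which computes $\varphi_e$ itself in $\smod(kG_e)$ as $(\alpha_{\tau(e)}\res{G_{\tau(e)}}{G_e})^{-1}(\alpha_{\iota(e)}\res{G_{\iota(e)}}{G_e})$.

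Substituting the second expression into the first immediately yields
\[
\varphi_e' \;=\; \theta_{\tau(e)}^{-1}\,\alpha_{\tau(e)}^{-1}\,\alpha_{\iota(e)}\,\theta_{\iota(e)},
\]
with all equalities understood in $\smod(kG_e)$ and all restriction symbols suppressed. The one thing worth spelling out is that the composition makes sense in the stable category: the maps $\theta_v$ are stable isomorphisms $\tilde k\res G{G_v}\to M_v$, and Lemma~\ref{lem1}(1) guarantees that the $\alpha_v$ are stable isomorphisms $M_v\to\ovl C(\ul M,\ul\varphi)\res G{G_v}$, so the composite on the right is a well-defined stable automorphism of $\tilde k\res G{G_e}$, hence an element of $\saut_{G_e}(k)$.

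I do not expect any real obstacle: the content of the statement is entirely absorbed into Lemmas~\ref{lem1} and~\ref{lem2}, and the corollary is just the formal composition of the identity from Lemma~\ref{lem1}(2) with conjugation by the chosen trivialisations $\theta_{\iota(e)}$ and $\theta_{\tau(e)}$ as prescribed in Lemma~\ref{lem2}. The only mild subtlety, if one wanted to be pedantic, is that to write the equality on the nose (rather than up to the identifications made by Lemma~\ref{lem1}(1)) one must interpret $\varphi_e$ itself via Lemma~\ref{lem1}(2) in $\smod(kG_e)$, which is exactly what that lemma provides.
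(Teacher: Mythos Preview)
Your proof is correct and matches the paper's approach exactly: the paper states this corollary as a direct consequence of Lemmas~\ref{lem1} and~\ref{lem2} without further argument, and what you have written is precisely the substitution the paper has in mind.
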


Now take $M_v=\tilde k\res G{G_v}$ and $\theta_v$ to be the identity
map.
Define
\begin{equation}\label{eq:partial}
\partial \! : \prod_{e\in EY}\wh{\Aut}_{G_e}(k)\longrightarrow T(G)
\qbox{by}\partial\big((\varphi'_e)_{e\in EY}\big)=\ovl C(\ul k,\ul{\varphi}),
\end{equation}
where $\ovl C(\ul k,\ul{\varphi})$ is the cone defined above.
This yields an invertible module by Lemma~\ref{lem1} and
Theorem~\ref{thm:invertible1}, since every finite subgroup must fix
some vertex, 
but we do not yet know that $\partial$ is a group homomorphism.

\begin{thm}\label{thm:trees}
Let $G$ be a group acting on a tree with vertex set $VT$ and edge set
$ET$, and assume that all the vertex stabilisers are groups of type
$\Phi_k$ and that there exists a common bound on their finistic dimensions.
Then $G$ is of type $\Phi_k$ and there is an  exact sequence of abelian groups
\begin{multline*}
\wh{\Aut}_{G}(k)\xrightarrow{\Res}
\prod_{v\in VY}\wh{\Aut}_{G_v}(k)\xrightarrow{\Res-\Res_f}
\prod_{e\in EY}\wh{\Aut}_{G_e}(k)\xrightarrow{\partial}...\\
...\xrightarrow{\partial}T(G)\xrightarrow{\Res}
\prod_{v\in VY}T(G_v)\xrightarrow{\Res-\Res_f}
\prod_{e\in EY}T(G_e).\end{multline*}
Here, for $M\in T(G)$, the $T(G_v)$-coordinate of $\Res(M)$ is
$M\res G{G_v}$; for $M_v\in T(G_v)$, $$\qbox{the $T(G_e)$-coordinate of
$\Res(M_v)$ is}\left\{\begin{array}{ll}
M_v\res{G_v}{G_e}&\qbox{if $v=\iota(e)$}\\
0&\qbox{otherwise}\end{array}\right.$$
$$\qbox{and the $T(G_e)$-coordinate of
$\Res_f(M_v)$ is}\left\{\begin{array}{ll}
f_e^*(M_v)\res{G_v}{G_e}&\qbox{if $v=\ovl{\tau(e)}$}\\
0&\qbox{otherwise.}\end{array}\right.$$

We take the product of the maps on the coordinates, which is
well defined since each edge has only two vertex maps associate to it. The maps for
the groups of stable automorphisms are defined analogously and $\partial$
is the map defined in Equation~\eqref{eq:partial}.
\end{thm}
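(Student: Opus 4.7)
My plan is to treat this as a Mayer--Vietoris style sequence arising from the action of $G$ on $T$, with the crucial reduction tool being Lemma~\ref{lem:iso}: every finite subgroup of $G$ fixes a vertex of $T$ and so lies in a conjugate of some $G_v$, which means stable isomorphisms of invertible modules over $G$ can be checked on restriction to the vertex stabilisers. First I would verify that $G$ is of type $\Phi_k$ by plugging the augmented cellular chain complex $0\to C_1(T)\to C_0(T)\to k\to 0$ of $T$ into Proposition~\ref{prop:g-cw-cx}, noting that each $C_i$ is a direct sum of modules induced from vertex or edge stabilisers, which are of type $\Phi_k$ with a uniform bound on $\findim$. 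Then I would check that $\Res$ and $\Res_f$ are group homomorphisms on both the $T$ and $\wh{\Aut}(k)$ levels (obvious) and that $\partial$ is a homomorphism: working with Gorenstein projective representatives via Lemma~\ref{lem:nat-onto}, the key identity $\ovl C(\ul k,\ul\varphi\cdot\ul\psi)\simeq \ovl C(\ul k,\ul\varphi)\otimes_k\ovl C(\ul k,\ul\psi)$ follows from Lemma~\ref{lem1}(1), since under the canonical maps $\alpha_v$ both sides reduce on each $G_v$ to $\tilde k\res G{G_v}\otimes\tilde k\res G{G_v}\simeq\tilde k\res G{G_v}$ compatibly with the gluings across edges.

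The bulk of the proof is exactness at the four interior positions. For exactness at $\prod_{v\in VY}T(G_v)$, the composite $(\Res-\Res_f)\circ\Res$ vanishes because for any $M\in T(G)$ the two restrictions $M\res G{G_{\iota(e)}}\res{G_{\iota(e)}}{G_e}$ and $f_e^*(M\res G{G_{\ovl{\tau(e)}}})\res{G_{\ovl{\tau(e)}}}{G_e}$ are both just $M\res G{G_e}$ up to the intertwining by $t_{\tau(e)}$ built into the definition of $\Res_f$. Conversely, given $(M_v)$ in the kernel, I would use Lemma~\ref{lem:nat-onto} to pick genuine $kG_e$-homomorphisms $\varphi_e$ implementing the edge matchings and set $M:=\ovl C(\ul M,\ul\varphi)$; Lemma~\ref{lem1}(1) then gives $M\res G{G_v}\simeq M_v$, and Theorem~\ref{thm:invertible1} confirms $M$ is invertible since the $M_v$ are. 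Exactness at $T(G)$ is analogous: $\Res\circ\partial=0$ is just Lemma~\ref{lem1}(1) with $M_v=\tilde k\res G{G_v}$, while for $M\in\ker\Res$ I would fix stable trivialisations $\theta_v\colon \tilde k\res G{G_v}\to \tilde M\res G{G_v}$, form $\varphi'_e:=\theta_{\tau(e)}^{-1}\alpha_{\tau(e)}^{-1}\alpha_{\iota(e)}\theta_{\iota(e)}$ from Corollary~\ref{cor1} applied to a cone recovering $M$, and verify $\partial((\varphi'_e))\simeq M$ by comparing the two cones vertex by vertex.

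Exactness at $\prod_{e\in EY}\wh{\Aut}_{G_e}(k)$ uses the same cone machinery in reverse: a datum $(\varphi'_e)$ lies in $\ker\partial$ exactly when $\ovl C(\ul k,\ul\varphi)\simeq k$ in $\smod(kG)$, and in that case the resulting global trivialisation together with the $\alpha_v$'s produces, via Corollary~\ref{cor1}, a family $(\psi_v)$ of vertex stable automorphisms of $k$ whose image under $\Res-\Res_f$ is $(\varphi'_e)$. Exactness at $\prod_{v\in VY}\wh{\Aut}_{G_v}(k)$ asserts that an edge-compatible family of vertex stable automorphisms of $k$ glues to a global one; this is the $\wh{\Aut}$ analogue of the Mayer--Vietoris computation of $\send_{kG}(k)$ sketched in Example~\ref{ex:cp*cp}, and can either be deduced from it by passing to unit groups or proved directly by the same argument applied to a coherent vertex family to produce an element of $\send_{kG}(k)$ restricting to the given data.

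The main obstacle will be the reverse direction of exactness at $T(G)$: one must show that two a priori different reconstructions of the same $M$ from edge data (one intrinsic, via a resolution by $M\res G{G_v}$'s assembled over $T$, the other built from the chosen trivialisations $\theta_v$) are stably isomorphic as $kG$-modules. Pinning down this isomorphism requires careful bookkeeping with the transversal elements $t_v$, the twists $f_v^*$, and the Gorenstein approximations, but once Lemma~\ref{lem1} and Corollary~\ref{cor1} are in hand it reduces to a diagram chase on each $G_v$ rather than a new construction.
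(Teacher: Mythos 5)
Your overall outline matches the paper's strategy (cone construction $\ovl C(\ul M,\ul\varphi)$, Lemma~\ref{lem1}, Corollary~\ref{cor1}, reduction to vertex stabilisers via contractibility of the tree and Lemma~\ref{lem:iso}), and your treatment of exactness at $\prod_v T(G_v)$ and $\prod_v \wh\Aut_{G_v}(k)$ is essentially what the paper does. But two of the harder steps are glossed over in a way that hides the real work.

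First, you assert that $\partial$ is a homomorphism because the tensor identity $\ovl C(\ul k,\ul{\varphi}\cdot\ul{\psi})\simeq\ovl C(\ul k,\ul\varphi)\otimes\ovl C(\ul k,\ul\psi)$ ``follows from Lemma~\ref{lem1}(1)''. Lemma~\ref{lem1}(1) only tells you that each side restricts stably to $\tilde k\res G{G_v}$ on each vertex stabiliser; it does not produce a map between the two $kG$-modules, and Lemma~\ref{lem:iso} can only promote a morphism to an isomorphism, not manufacture one. The paper avoids this entirely: it shows that the assignment $s\colon M\mapsto(\varphi'_e)$, where $\varphi'_e=(\theta_{\tau(e)}\res{}{G_e})^{-1}(\theta_{\iota(e)}\res{}{G_e})$ for chosen trivialisations $\theta_v$, is a group homomorphism on $\ker(\Res)$ (easy, via $k\to k\otimes k\to M_1\otimes M_2$), that $\partial$ descends to $\ovl\partial$ on $\coker(\Res-\Res_f)$, and that $s$ and $\ovl\partial$ are mutually inverse. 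Then $\ovl\partial$ is a homomorphism because it is the inverse of one, and exactness at both $\prod_e\wh\Aut_{G_e}(k)$ and $T(G)$ falls out simultaneously.

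Second, for surjectivity of $\ovl\partial$ onto $\ker(\Res)$, you call it ``a diagram chase on each $G_v$'', but it is not: local comparison alone cannot glue to a global $kG$-isomorphism without a global map to compare. The paper's crucial device is that $C(\ul{\tilde k},\ul{\id})$ is the (Gorenstein-approximated) augmented simplicial chain complex of the tree $T$, and contractibility of $T$ gives $C(\ul{\tilde k},\ul{\id})\simeq k$ in $\smod(kG)$. Tensoring with $M$ exhibits $M$ as the third term of the triangle for $C(\ul{\tilde k},\ul{\id})\otimes M$, and the trivialisations $\theta_v$ assemble into an isomorphism of complexes $C(\ul{\tilde k},\ul{\id})\otimes M\to C(\ul{\tilde k},\ul{\varphi'})$, whence $M=\ovl\partial s(M)$. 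Also note your formula $\varphi'_e=\theta_{\tau(e)}^{-1}\alpha_{\tau(e)}^{-1}\alpha_{\iota(e)}\theta_{\iota(e)}$ is misplaced here: the maps $\alpha_v$ belong to a cone $\ovl C(\ul M,\ul\varphi)$, not to an arbitrary $M\in\ker(\Res)$; they enter only in the verification $s\ovl\partial=\id$ (Corollary~\ref{cor1}), not in the definition of $s$.
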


\begin{proof}
First we check that $\partial$ is constant on the cosets of the image
of $\Res-\Res_f$. Given $\psi_v\in \wh{\Aut}_{G_v}(k)$ for $v\in VY$,
we extend it to $v\in VT$ by $\psi_v=t^{-1}_v\psi_{\bar v}t_v$.
Then
$$(\Res-\Res_f)(\psi_v)=\big(\psi_{\iota(e)}\res{G_{\iota(e)}}{G_e}\big)
\big(\psi_{\tau(e)}\res{G_{\tau(e)}}{G_e}\big)^{-1}.$$
The maps $\psi_v$ combine in the obvious way to yield a map of
complexes
$$C(\tilde k\res{G}{G_v},\varphi_e)\longrightarrow
C\big(\tilde k\res {G}{G_v},\varphi_e(\psi_{\iota(e)}\res{G_{\iota(e)}}{G_e})
(\psi_{\tau(e)}\res{G_{\tau(e)}}{G_e})^{-1}\big),$$
in the sense that the diagram commutes in the stable module category.
It is also a stable isomorphism on the modules, and so the third
objects in the triangles are isomorphic, as required.

The image of $\partial$ is contained in $\ker(\Res)$ by
Lemma~\ref{lem1}, so we obtain a function
$$\ovl{\partial}:\coker(\Res-\Res_f)\longrightarrow\ker(\Res).$$
We shall construct an inverse $s$ as follows. Given $M\in T(G)$ such that $M\res G{G_v} \simeq  k$ for $v\in VY$, set $M_v=M\res G{G_v}$ and choose
isomorphisms
$\theta_v:\tilde k \res{G}{G_v}\longrightarrow M_v$; extend these to all the
vertices $VT$ by setting $\theta_v=s_v\otimes \theta_{\bar v}$.
Put
$\varphi'_e=
\big(\theta_{\tau(e)}\res{G_{\tau(e)}}{G_e}\big)^{-1}(\theta_{\iota(e)}\res{G_{\iota(e)}}{G_e})$
for $e\in EY$ and define $s(M)=(\varphi'_e)_{e \in EY}$.
As element of $\coker(\Res-\Res_f)$, $s(M)$ does not depend on the
choice of the maps $\theta_v$.
This function $s$ is actually a group homomorphism, because given
$\theta_j\in\wh{\Hom}_{G_e}(\tilde k,M_j)$ for $j=1,2$, we can combine
them to obtain a homomorphism
$$\xymatrix{k\ar[r]&k\otimes
k\ar[r]^<<<<{\theta_1\otimes\theta_2}&M_1\otimes M_2}.$$
Moreover, $\sigma\ovl\partial$ is the identity by Corollary~\ref{cor1},
since we can choose $\theta_v=\alpha_v^{-1}$ (the maps $\theta_v$ in
the statement of Corollary~\ref{cor1} are now the identity maps).

Given $M\in\ker(\Res)$ and $\theta_v$ as above, notice that
$C(\ul{\tilde k},\ul{\id})=k$, since this is the simplicial chain
complex on the tree $T$ and so $M$ is the third module in the
triangle for $C(\ul{\tilde k},\ul{\id})\otimes M$.
The maps $\theta_v$ provide an isomorphism of complexes
$$C(\ul{\tilde k},\ul{\id})\otimes M\longrightarrow
C\big(\ul{\tilde k},
\ul{\big(\theta_{\tau(e)}\res{G_{\tau(e)}}{G_e}\big)^{-1}(\theta_{\iota(e)}\res{G_{\iota(e)}}{G_e})}\big),$$ so $M = \ovl{\partial}\sigma(M)$ as required.
It follows that $\partial$ is a group homomorphism and that the
complex in the statement of the theorem is exact at
$\prod_{e\in EY}\wh{\Aut_{G_e}}(k)$ and  $T(G)$.

To check the exactness at
$\prod_{v\in VY}T(G_v)$, observe that if
$(M_v)_{v\in VY}\in\ker(\Res-\Res_f)$, then for each edge $e\in EY$
we have
$M_{\iota(e)}\res{G_{\iota(e)}}{G_e}\simeq f_e^*M_{\ovl{\tau(e)}}$;
let $\varphi_e:M_{\iota(e)}\res{G_{\iota(e)}}{G_e}\longrightarrow
M_{\tau(e)}\res{G_{\tau(e)}}{G_e}$ be a stable isomorphism.
Then $(M_v)_{v\in VY}=\Res\big(\ovl C(\ul M,\ul{\varphi})\big)$ by
Lemma~\ref{lem1}.

Exactness at $\prod_{v\in VY}\wh{\Aut}_{G_v}(k)$ is proved by an
argument analogous to that used for cohomology \cite[VII, \S9]{brown},
and is left to the reader.

\end{proof}

As special cases of Theorem~\ref{thm:trees}, we obtain
descriptions of $T(G)$ for amalgamated free products and HNN extensions
of groups of type $\Phi$.

\begin{cor}\label{cor:amalg}
Let $G=A*_CB$ be an amalgamated free product with 
 $A$ and $B$ of type $\Phi_k$. Then $G$ is of type $\Phi_k$ and there is 
an  exact sequence of abelian groups
$$\wh{\Aut}_{G}k\xrightarrow{\Res}
\wh{\Aut}_{A}k\oplus\wh{\Aut}_{B}k\xrightarrow{\Res-\Res_f}
\wh{\Aut}_{C}(k)\xrightarrow{\partial}
T(G)\xrightarrow{\Res}
T(A)\oplus T(B)\xrightarrow{\Res-\Res_f}
T(C),$$
where the maps are those defined in Theorem~\ref{thm:trees}.

If $C$ is finite and $k$ is a field or a complete discrete valuation ring then $\partial =0$.
\end{cor}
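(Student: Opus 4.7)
The exact sequence is the immediate specialisation of Theorem~\ref{thm:trees} to the Bass-Serre tree of $G = A *_C B$: the quotient graph is a single edge, with the two vertex stabilisers equal to (conjugates of) $A$ and $B$ and the edge stabiliser equal to $C$. A common bound on finistic dimensions is trivially provided by $\max(\findim kA, \findim kB)$, so the hypotheses of Theorem~\ref{thm:trees} are satisfied.

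The plan for the second assertion is to exploit exactness of the sequence at $\wh{\Aut}_{C}(k)$: it suffices to show that the preceding map
$$\Res - \Res_f \! : \wh{\Aut}_{A}(k) \oplus \wh{\Aut}_{B}(k) \longrightarrow \wh{\Aut}_{C}(k)$$
is surjective. Since $C$ is finite, $\wh{\End}_{kC}(k) \cong \wh{H}^0(C;k) \cong k/|C|k$, so $\wh{\Aut}_{C}(k) \cong (k/|C|k)^\times$. The key observation is that the $k$-algebra structure on $\wh{\End}_{kA}(k)$ provides a natural map $k \to \wh{\End}_{kA}(k)$, given by scalar multiplication endomorphisms of $k$, and the composition $k \to \wh{\End}_{kA}(k) \xrightarrow{\Res} \wh{\End}_{kC}(k) = k/|C|k$ is simply the quotient map, because a scalar endomorphism restricts to itself. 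Passing to units, the image of $k^\times \subseteq \wh{\Aut}_{A}(k)$ in $\wh{\Aut}_{C}(k)$ coincides with the image of the map $k^\times \to (k/|C|k)^\times$.

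The remaining and essentially only technical step is to check that $k^\times \to (k/|C|k)^\times$ is surjective for the specified $k$. If $|C|$ is a unit in $k$ the target is trivial. Otherwise, for $k$ a field we have $k/|C|k = k$ and the map is the identity on units; for $k$ a complete discrete valuation ring with uniformiser $\pi$, we have $k/|C|k \cong k/\pi^a k$ for some $a \geq 1$, and any unit in this quotient lifts to a unit of $k$ because its image in the residue field is non-zero. Granted this surjectivity, $\Res - \Res_f$ is surjective and hence $\partial = 0$ by exactness. The only delicate point is the last lifting argument, which is why the hypothesis on $k$ cannot simply be dropped.
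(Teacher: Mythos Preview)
Your argument is correct and essentially identical to the paper's: both deduce the exact sequence as the special case of Theorem~\ref{thm:trees} for the Bass--Serre tree of $A*_CB$, compute $\wh{\Aut}_{C}(k)\cong (k/|C|k)^\times$, and observe that under the hypotheses on $k$ every such unit lifts to $k^\times$ and hence lies in the image of restriction from $A$, forcing $\partial=0$. You have simply spelled out the lifting step in more detail than the paper does.
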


\begin{proof} The main part is a special case of Theorem~\ref{thm:trees}. If $C$ is finite then $\wh{\End}_{C}(k)  \cong \hat{H}^0(C;k) \cong k/|C|k$, so $\saut_C(k) \cong ( k/|C|k)^{\times}$. Given the restrictions on $k$, these units lift to units in $k$, so they are certainly in the image of restriction from $A$, thus restriction is onto and $\partial =0$.
	\end{proof}

\smallskip
\begin{example}\label{ex:sl2} 
Let $G=\SL(2,\Z)\cong C_6*_{C_2}C_4$. Then for $k$ a field or a complete discrete valuation ring  the sequence
$$\xymatrix{0\ar[r]&T(G)\ar[r]&T(C_6)\oplus T(C_4)\ar[r]&T(C_2)\ar[r]&0}$$
is exact. This follows directly from Corollary~\ref{cor:amalg}, except for exactness at $T(C_2)$, where it follows from the fact that $T(C_2)$ is generated by $\Omega k$, which is in the image of restriction from $T(C_4)$.
\begin{enumerate}
\item Suppose that $k$ is a field of characteristic 2.  Then $T(C_2) \cong 0$, $T(C_4) \cong \mathbb Z/2$ and $T(C_6) \cong \Hom(C_6,k^\times)$. It follows that $T(G) \cong \mathbb Z/6$ if $k$ contains a cube root of unity and $T(G) \cong \mathbb Z/2$ otherwise.
\item Suppose that $k$ is a field of characteristic 3. Then $T(C_4) \cong 0 \cong T(C_2)$ and $T(C_6) \cong \mathbb Z /2 \oplus \mathbb Z /2$ (using \cite{MT} for $T(C_6)$). Thus $T(G) \cong \mathbb Z /2 \times \mathbb Z /2$.
\end{enumerate}
\end{example}

Here is an example of inflation, which shows that inflation can be neither surjective nor injective.

\begin{example}\label{ex:inflation} 
Let $G=X*_ZY\cong C_4*_{C_2}C_4$ where $X$ and $Y$ are generated by $x$ and $y$ respectively.
Let $N$ be the normal subgroup generated by $xyxy^{-1}$, let $\bar{G}=G/N \cong Q_8$, and denote the quotient map by $\pi$.
The subgroup $N$ has
no torsion, since any torsion subgroup of $G$ must be conjugate to a subgroup of either $X$ or $Y$. Thus we do have an inflation map $\Inf_G^{\bar{G}}$.

Let $k$ be a field of characteristic $2$. Then $T(C_4) \cong \mathbb Z/2$ and $T(C_2) \cong 0$, so by Corollary~\ref{cor:amalg} $T(G) \cong \mathbb Z/2 \oplus \mathbb Z/2$. We claim that the image of $\Inf_G^{\bar{G}}$ is the diagonal subgroup of $T(G)$. The diagonal subgroup is generated by $\Omega k$, which is the image under inflation of the module of the same name in $T({\bar{G}})$. The elements of $T(G)$ are detected by restriction to the cyclic subgroups $X$ and $Y$, and $\Res ^G_X \Inf^{\bar{G}}_G =(\pi \! \restriction_X)^*  \Res^{\bar{G}}_{\bar{X}}$ and similarly for $Y$. But $\Res^{\bar{G}}_{\bar{X}}$  and $\Res^{\bar{G}}_{\bar{Y}}$ take the same value, 0 or 1: this can be seen by an easy calculation, since $T(Q_8)$ is generated by $\Omega k$ together with an explicit module of dimension 3 if $k$ contains a cube root of unity \cite[6.3]{CT2}. Both $\pi \! \restriction_X$ and $\pi \! \restriction_Y$ are isomorphisms, so $\Res ^G_X \Inf^{\bar{G}}_G = \Res ^G_Y \Inf^{\bar{G}}_G$.

The group $T(Q_8)$ has order at least 4 (it depends on $k$), so the inflation map is not injective.
\end{example}

We now turn to the case of HNN extensions. 
An HNN extension $G=H*_f$, for $A \leq H$ and $f \! : A\hookrightarrow H$, is the fundamental group of a graph of groups with one vertex $H$ and one edge $A$, where $A$ is included into $H$ at the initial vertex and is mapped by $f$ at the
terminal vertex. In terms of generators and relations, 
$G=\langle H,t \mid tat^{-1}=f(a)\;\forall\;a\in A\rangle
$.

\begin{cor}\label{cor:hnn}
Let $G=H*_f$ be a HNN extension with $H$ of type $\Phi_k$.
Then $G$ is of type $\Phi_k$ and there is an exact sequence of abelian groups
$$\wh{\Aut}_{G}k\xrightarrow{\Res}
\wh{\Aut}_{H}k\xrightarrow{\Res-\Res_f}
\wh{\Aut}_{A}(k)\xrightarrow{\partial}
T(G)\xrightarrow{\Res}
T(H)\xrightarrow{\Res-\Res_f}
T(A),$$
where the maps are those defined in Theorem~\ref{thm:trees}.
If $H$ is finite and $k$ is a field or a complete discrete valuation ring then $\partial$ is injective.
\end{cor}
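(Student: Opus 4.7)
The plan is to specialize Theorem~\ref{thm:trees} to the Bass--Serre tree for an HNN extension. For $G = H *_f$, the underlying graph of groups has a single vertex $v$ with group $G_v = H$ and a single loop edge $e$ with group $G_e = A$, so the fundamental transversal has $VY = \{v\}$ and $EY = \{e\}$ with $\iota(e) = v$ and $\overline{\tau(e)} = v$. Taking $t_v = 1$, the HNN relation $tat^{-1} = f(a)$ yields $tAt^{-1} = f(A) \leq H$, hence $A \leq t^{-1}Ht$, so we may take $t_{\tau(e)} = t^{-1}$ with $G_{\tau(e)} = t^{-1}Ht$; the induced map $f_e \colon G_e \to G_{\overline{\tau(e)}}$, $a \mapsto t_{\tau(e)}^{-1} a\, t_{\tau(e)} = tat^{-1}$, is precisely $f$. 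Because the products over $VY$ and $EY$ in the sequence of Theorem~\ref{thm:trees} each collapse to a single factor, the six-term exact sequence stated in the corollary follows at once, and $G$ being of type $\Phi_k$ is part of that theorem.

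For the injectivity of $\partial$ under the extra hypotheses, by exactness at $\saut_A(k)$ it is enough to show that $\Res - \Res_f \colon \saut_H(k) \to \saut_A(k)$ is the zero homomorphism. As in the proof of Corollary~\ref{cor:amalg}, for $H$ finite we have $\saut_H(k) \cong (\hat{H}^0(H;k))^\times \cong (k/|H|k)^\times$ and likewise $\saut_A(k) \cong (k/|A|k)^\times$. When $k$ is a field or a complete discrete valuation ring, every class in $(k/|H|k)^\times$ lifts to a unit $c \in k^\times$, so any $\alpha \in \saut_H(k)$ is represented by multiplication by some $c \in k^\times$ on the trivial module. Both $\Res(\alpha)$ and $\Res_f(\alpha)$ are then multiplication by the same scalar $c$ in $\saut_A(k)$, because the $H$- and $A$-actions on $k$ are trivial and the scalar is unchanged whether we restrict along the inclusion $A \hookrightarrow H$ or pull back along $f$. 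Hence $\Res = \Res_f$, the image of the difference is trivial, and $\partial$ is injective.

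The only substantive step is the identification $f_e = f$, which is a matter of tracking the Bass--Serre orientation together with the HNN relation; once that is in hand, the exact sequence is immediate from Theorem~\ref{thm:trees} and the injectivity of $\partial$ reduces to the Tate-cohomology observation above.
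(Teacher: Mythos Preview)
Your proposal is correct and follows essentially the same approach as the paper: both deduce the exact sequence by specialising Theorem~\ref{thm:trees} to the one-vertex, one-edge graph of groups underlying the HNN extension, and both establish injectivity of $\partial$ via the fact that, under the hypotheses on $k$, every element of $\saut_H(k) \cong (k/|H|k)^\times$ lifts to multiplication by a unit of $k$. The only minor difference is that the paper uses this lift to show that $\Res \colon \saut_G(k) \to \saut_H(k)$ is surjective and then invokes exactness at $\saut_H(k)$ to conclude $\Res - \Res_f = 0$, whereas you argue directly that scalar multiplication by $c \in k^\times$ restricts identically along both $A \hookrightarrow H$ and $f$; your route is marginally more self-contained since it avoids the exactness at $\saut_H(k)$ that the paper leaves to the reader.
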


\begin{proof} The main part is a special case of Theorem~\ref{thm:trees}. If $H$ is finite then $\Res \! : \saut_G(k) \rightarrow \saut_H(k)$ is onto by the same argument as in the previous proof. Thus $\Res - \Res_f=0$ and $\partial$ is injective.
\end{proof}

Note that if $f$ is the identity map on $H$ then $H*_f \cong \mathbb Z \times H$ and $\Res - \Res_f=0=0$

In the next three examples, $k$ is a field of characteristic $p$ or a complete discrete valuation ring with residue field of characteristic $p$ and $A$ is a finite group of order divisible by $p$. 

\begin{example} Consider $G=\mathbb Z \times A$ as an HNN extension and apply Corollary~\ref{cor:hnn}. Since $f$ is the identity, we have $\Res - \Res_f=0$, so there is a short exact sequence $0 \xrightarrow{} (k/|A|k)^{\times} \xrightarrow{\partial}
	T(G)\xrightarrow{\Res}
	T(A)\xrightarrow{} 0$; by Lemma~\ref{lem:inf} it is split by inflation.  Thus $T(G)$ is generated by modules inflated from $A$ and the rank 1 lattices.
	
	This shows that $T(G)$ can be very big: we cannot bound its cardinality independently of $k$.
	
	We want to calculate $\saut_G(k)$ for use in the next example. This can be done by noting that $\send_G(k) \cong \hat{H}^0(G;k)$ (the cup product agrees with the composition product \cite[VI.6, X.ex.6]{brown}) and using the spectral sequence in Tate-Farrell cohomology $H^p(\mathbb Z; \hat{H}^q(A;k)) \Rightarrow \hat{H}^{p+q}(G;k)$ from \cite[X.4 ex.5]{brown}. The result is that $\send_G(k) \cong k[x]/(x^2)$ as a $k$-algebra if $p=0$ in $k$, and $\send_G(k) \cong k/|A|k$ otherwise. Thus $\saut_G(k) \cong k^{\times} \oplus k$ as a group if $p=0$ in $k$ and  $\saut_G(k)\cong (k/|A|k)^{\times}$ otherwise. We can write this in a uniform fashion as $\saut_G(k) \cong (k/|A|k)^{\times} \oplus \tor_p(k)$.
	\end{example}

\begin{example} Consider $F=\mathbb Z \times G = \mathbb Z \times \mathbb Z \times A$. A similar calculation to the previous one shows that $T(F) \cong \saut_G(k) \oplus T(G)$. Combining this with our previous calculations we obtain $T(F) \cong \Hom(\mathbb Z \times \mathbb Z, (k/|A|k)^{\times}) \oplus \tor_p(k) \oplus T(A)$.
	
	The rank 1 lattices still appear, as $\Hom(\mathbb Z \times \mathbb Z, (k/|A|k)^{\times})$ and part of $T(A)$,  but with identifications. The image of inflation from $A$ accounts for the $T(A)$ summand. The modules corresponding to the $\tor_p(k)$ summand are more mysterious. They do not appear to be stably isomorphic to a lattice of finite rank. They also only occur when $p=0$ in $k$, so they cannot lift to characteristic 0, in contrast to the case for finite groups in Proposition~\ref{pr:lift}.  
	\end{example}

\begin{example}
	Consider $E=\mathbb Z \times (A*A)$. From Corollary~\ref{cor:amalg} we know that $T(A*A) \cong T(A) \oplus T(A)$. By Example~\ref{ex:cp*cp} we have $\saut_{A*A}(k) \cong \saut_A(k) \oplus \saut_A(k) \cong (k/|A|k)^{\times} \oplus (k/|A|k)^{\times}$.
	
	We obtain $T(G) \cong (k/|A|k)^{\times} \oplus (k/|A|k)^{\times} \oplus T(A) \oplus T(A)$. Only one copy of $(k/|A|k)^{\times}$ can be explained by the rank 1 lattices; in some sense this is because we should think instead of homomorphisms from $\mathbb Z$ to 
$\saut_{A*A}(k)$,
\end{example}

\end{document}